\titleformat{\subsection}[runin]
{\normalfont\large\bfseries}{\thesubsection}{0.5em}{}
\title{Matroid stratifications of hypergraph varieties, their realization spaces, and discrete conditional independence models
}
\author{
 Oliver Clarke, Kevin Grace, Fatemeh Mohammadi, and Harshit J Motwani
 }
\date{}
\tikzstyle{vertex}=[circle, draw, inner sep=0pt, minimum size=6pt]
\newcommand{\CC}{\mathbb{C}}
\newcommand{\KK}{\mathbb{K}}
\newcommand{\MX}{\mathcal{X}}
\newcommand{\MY}{\mathcal{Y}}
\newcommand{\MH}{\mathcal{H}}
\newcommand{\MC}{\mathcal{C}}
\newcommand{\MD}{\mathcal{D}}
\newcommand{\ML}{\mathcal{L}}
\newcommand{\MM}{\mathcal{M}}
\newcommand{\MS}{\mathcal{S}}
\newcommand{\MT}{\mathcal{T}}
\newcommand{\MP}{\mathcal{P}}
\newcommand{\mL}{\mathcal{L}}
\newcommand{\mP}{\mathcal{P}}
\newcommand{\mI}{\mathcal{I}}
\newcommand{\rk}{\textrm{Rank}}
\newcommand{\spn}{\textrm{Conv}}
\newcommand{\Hom}{\textrm{Hom}}
\newcommand{\comb}{\mathrm{comb}}
\newcommand{\ind}[2]{{#1}\perp\!\!\!\perp{#2}}
\DeclareMathOperator{\Gr}{Gr}
\DeclareMathOperator{\rank}{rank}
\newtheorem{theorem}{Theorem}[section]
\newtheorem*{theorem*}{Theorem}
\newtheorem{lemma}[theorem]{Lemma}
\newtheorem{corollary}[theorem]{Corollary}
\newtheorem{proposition}[theorem]{Proposition}
\newtheorem*{proposition*}{Proposition}
\newtheorem{problem}[theorem]{Problem}
\newtheorem{conjecture}[theorem]{Conjecture}
\newtheorem{question}[theorem]{Question}
\theoremstyle{remark}
\newtheorem{remark}[theorem]{Remark}
\theoremstyle{definition}
\newtheorem{example}[theorem]{Example}
\newtheorem{definition}[theorem]{Definition}
\newtheorem*{notation}{Notation}
\begin{document}

\maketitle

\begin{abstract}

We study varieties associated to hypergraphs from the point of view of projective geometry and matroid theory. We describe their decompositions into matroid varieties, which may be reducible and can have arbitrary singularities by the Mnëv--Sturmfels universality theorem. 
We focus on various families of hypergraph varieties for which we explicitly compute an irredundant irreducible decomposition. Our main
findings in this direction are threefold: (1) we describe minimal matroids of such hypergraphs; (2) we prove
that the varieties of these matroids are irreducible and their union is the hypergraph variety; and (3) we
show that every such matroid is realizable over real numbers.
As corollaries, we give conceptual decompositions of various, previously-studied, varieties associated
with graphs, hypergraphs, and adjacent minors of generic matrices. In particular, our decomposition strategy gives immediate
matroid interpretations of the irreducible components of multiple families of varieties associated to conditional independence (CI) models in statistical theory
and unravels their symmetric structures which hugely simplifies the computations.
\end{abstract}

{\hypersetup{linkcolor=black}\setcounter{tocdepth}{1}\setlength\cftbeforesecskip{1.1pt}{\tableofcontents}}

\section{Introduction}\label{sec:intro}

This work develops new connections between 
conditional independence (CI) models 
in statistics \cite{Studeny05:Probabilistic_CI_structures,DrtonSturmfelsSullivant09:Algebraic_Statistics,Sullivant}, projective geometry \cite{richter2011perspectives,lee2013mnev}, the theory of matroids \cite{Oxley, piff1970vector} and their realization spaces 
\cite{mnev1985manifolds, mnev1988universality, sturmfels1989matroid}, and determinantal varieties in commutative algebra \cite{bruns2003determinantal}. 
In particular, we study a family of varieties whose defining equations are indexed by the edges of some hypergraphs,  
and show that their irreducible decompositions lead to well-structured configurations of points and lines.  
Moreover, we apply our results to classes of \textit{conditional independence} varieties which naturally arise in algebraic statistics. 
We also analyze well-known results from the literature \cite{herzog2010binomial, Rauh, HSS, pfister2019primary, clarke2020conditional} and show, in these cases, that the irreducible components of their concerned varieties have concrete interpretations in terms of matroids. 

\smallskip

Our main object of study is the \textit{determinantal hypergraph variety} $V_{\Delta}$ which is associated to a hypergraph $\Delta$; see Definition~\ref{def:hypergraph}. 
The defining equations of $V_\Delta$ are the minors of a matrix $X$ of indeterminates and the corresponding polynomial ideal $I_\Delta$ generalizes many familiar families of determinantal ideals. For example, given integers $t\leq d\leq n$ and the $d\times n$ matrix $X=(x_{ij})$ of indeterminates, the classical ideal
$I_t(X)$ 
studied by Bruns and Conca \cite{bruns2003determinantal} is the ideal of the hypergraph $\Delta$ 
whose edges are all $t$-subsets of $[n] := \{ 1, \dots, n\}$. These ideals, along with many other families \cite{sturmfels1990grobner, herzog2010binomial, Fatemeh, Fatemeh2, pfister2019primary}, are often studied using Gr\"obner bases. When it is possible to find them, Gr\"obner bases are a powerful tool for understanding ideals. For instance, if the initial ideal is square-free, then the original ideal is radical. However, when the hypergraph $\Delta$ lies outside of one of the particularly nice families in the list above, finding a Gr\"obner basis becomes exceedingly difficult. In this paper, we develop a general framework called the \textit{decomposition strategy} for understanding the properties of arbitrary hypergraph ideals, without computing their Gr\"obner bases, using matroid varieties. 
\smallskip

A \textit{matroid} is a combinatorial axiomatization of linear dependence in a vector space. We refer to \cite{Oxley} for a complete introduction to matroids, however, it will now be useful for us to recall realizability and related notions. Given a finite collection of vectors in a fixed vector space, the collection of linearly dependent sets of vectors defines a matroid. If this process can be reversed, i.e.~for a given matroid $M$ we can find such a collection of vectors, then we call these vectors a realization of $M$. We write $\Gamma_M$ for the space of realizations of $M$. The \textit{matroid variety} $V_M$ of $M$ is the Zariski closure of the realization  space; see Definition~\ref{def:prelim_realisation}. Our main tool to study hypergraph varieties is the Decomposition Theorem which provides a matroid stratification for $V_\Delta$. One can think of this theorem as an altered version of the stratification of the Grassmannian by matroids, which was studied in \cite{gelfand1987combinatorial}. 

\begin{theorem*}[Theorem~\ref{thm:general_intersection}] The variety associated to the hypergraph $\Delta$ is the union of matroid varieties. The union is taken over all realizable matroids $M$ whose dependent sets contain the edges of $\Delta$.
\end{theorem*}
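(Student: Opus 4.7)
The plan is to prove the stated set-theoretic equality by establishing the two containments separately, each essentially by unwinding definitions. I will rely on two inputs: that $V_\Delta$ is cut out by vanishing of the minors encoding linear dependence of the columns of $X$ indexed by each edge $e\in\Delta$; and that $V_M=\overline{\Gamma_M}$ where $\Gamma_M$ is the realization space of $M$ and the closure is taken in the same matrix space as $V_\Delta$.

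For the inclusion $\bigcup_M V_M\subseteq V_\Delta$, I would fix a realizable matroid $M$ on the vertex set of $\Delta$ with every edge of $\Delta$ dependent in $M$, and show $\Gamma_M\subseteq V_\Delta$ directly: for any realization $X\in\Gamma_M$, the columns of $X$ indexed by each edge $e\in\Delta$ are linearly dependent (because $e$ is a dependent set of $M$), so all the minors defining $I_\Delta$ vanish at $X$. Since $V_\Delta$ is Zariski closed, taking closures gives $V_M\subseteq V_\Delta$, and then unioning over all admissible $M$ yields the desired inclusion.

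For the reverse inclusion $V_\Delta\subseteq \bigcup_M V_M$, I would associate to each $X\in V_\Delta$ its column matroid $M(X)$ on $[n]$, defined by declaring a subset of columns of $X$ independent when the associated vectors are linearly independent. By construction $M(X)$ is realizable, with $X$ itself providing the realization, and the assumption $X\in V_\Delta$ forces every edge of $\Delta$ to be dependent in $M(X)$. Hence $M(X)$ is one of the matroids indexing the right-hand side and $X\in\Gamma_{M(X)}\subseteq V_{M(X)}$, so $X$ lies in the union.

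The only step requiring any care, and arguably the main (modest) obstacle, is the clean translation between the algebraic condition ``all minors of $I_\Delta$ vanish at $X$'' and the geometric condition ``the columns indexed by each edge of $\Delta$ are linearly dependent,'' since this dictionary is what allows one to pass freely between $V_\Delta$ and column matroids. Once this translation is pinned down using Definition~\ref{def:hypergraph}, both containments are immediate, and the theorem reduces to a conceptual repackaging of realizability rather than a statement whose proof requires substantial new ideas.
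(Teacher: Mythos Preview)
Your proposal is correct and follows essentially the same argument as the paper's proof: both directions are proved by unwinding definitions, using that realizations of $M$ have dependent columns on each edge of $\Delta$ (giving $V_M\subseteq V_\Delta$ after closure), and associating to each $X\in V_\Delta$ its column matroid $M_X$ (giving the reverse inclusion).
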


Many of the matroid varieties appearing in the decomposition above are redundant. So, to refine this decomposition, we introduce the notion of \textit{combinatorial closures} of matroids; see Definition~\ref{def:comb_closure}. More precisely, the combinatorial closure $V^{\comb}_M$ of a given matroid $M$ is the hypergraph variety associated to its set of circuits. 

We recall that a \textit{minimally dependent matroid} for a given hypergraph $\Delta$ is a realizable matroid $M$ whose dependent sets contain $\Delta$ as a subset and is minimal among all such matroids. In other words, there does not exist a realizable matroid $N$ such that $\Delta\subseteq \MD(N) \subsetneq \MD(M)$. 

\begin{proposition*}[Proposition~\ref{prop:comb_decomposition}]
The associated variety of a hypergraph $\Delta$ is the union of combinatorial closures of matroid varieties, where the union is taken over all minimally dependent matroids for $\Delta$.
\end{proposition*}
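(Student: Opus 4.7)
The plan is to refine the decomposition from Theorem~\ref{thm:general_intersection} by grouping each $V_M$ appearing there inside the combinatorial closure of a minimally dependent matroid sitting below $M$. My starting point is therefore the identity $V_\Delta = \bigcup_M V_M$, where $M$ ranges over all realizable matroids on the ground set of $\Delta$ with $\Delta \subseteq \MD(M)$.

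The key technical ingredient I would prove first is: if $N$ is realizable and $\MD(M') \subseteq \MD(N)$, then $V_N \subseteq V^{\comb}_{M'}$. This should be immediate from the definitions. Every circuit $C \in \MC(M')$ is a dependent set of $M'$, hence of $N$, so any realization of $N$ has linearly dependent columns on $C$; it therefore satisfies the defining equations of $V^{\comb}_{M'} = V_{\MC(M')}$. Taking the Zariski closure preserves the inclusion.

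The inclusion $V_\Delta \subseteq \bigcup V^{\comb}_{M'}$ (where $M'$ ranges over minimally dependent matroids for $\Delta$) then follows from a finiteness argument. For each realizable $M$ appearing in Theorem~\ref{thm:general_intersection}, the nonempty finite collection of realizable matroids $N$ with $\Delta \subseteq \MD(N) \subseteq \MD(M)$ has a minimal element $M'$ under inclusion of dependent sets. Such an $M'$ is automatically a minimally dependent matroid for $\Delta$ in the sense defined before Proposition~\ref{prop:comb_decomposition}, because any realizable refinement strictly below $M'$ would, by transitivity, also sit below $M$, contradicting the local minimality. By the key ingredient, $V_M \subseteq V^{\comb}_{M'}$, so $V_\Delta$ is contained in the asserted union.

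The reverse inclusion is the easy direction. For any minimally dependent matroid $M'$ for $\Delta$, every edge $e \in \Delta$ is dependent in $M'$ and hence contains some circuit $C_e \in \MC(M')$. A point of $V^{\comb}_{M'}$ has linearly dependent columns on every circuit of $M'$ and thus on every superset of a circuit, so in particular on every edge of $\Delta$; hence $V^{\comb}_{M'} \subseteq V_\Delta$. The main subtlety, and where I would spend the most care, is the transition from \emph{locally minimal} (below a fixed $M$) to \emph{globally minimally dependent} in the previous paragraph; the argument is short but easy to overlook, and is the only place where the restriction to realizable matroids must be handled explicitly.
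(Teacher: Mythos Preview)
Your proof is correct and follows essentially the same route as the paper's: both reduce to Theorem~\ref{thm:general_intersection} via the monotonicity $M \le N \Rightarrow V^{\comb}_N \subseteq V^{\comb}_M$, which the paper reads off directly from the definition of $V^{\comb}_M$ as a union rather than via Lemma~\ref{lem:com}. Your local-versus-global minimality discussion is more than is needed, since one may simply pick a minimal element of the finite poset $\MM(\Delta)$ directly; the paper does exactly this in one line.
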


We conjecture that combinatorial closures are sufficient to find the irredundant irreducible decomposition of any hypergraph variety. In fact, we show that this is the case for some of the well-known examples of hypergraph varieties such as those corresponding to binomial edge ideals \cite{herzog2010binomial, Rauh}, conditional independence ideals with hidden variables \cite{clarke2020conditional, ollie_fatemeh_harshit} and ideals of adjacent minors \cite{HSS}. In all of these cases, each combinatorial closure is equal to its \textit{central component} which makes the computations easier.
One of the families of hypergraphs we study is the \textit{consecutive forest} hypergraphs; see Definition~\ref{def:consecutive_forest_hyp}. We will see that the combinatorial closures have non-central components. To compute the irreducible decomposition of $V_\Delta$, we apply the following method and note the corresponding sections for consecutive forest hypergraphs.

\medskip

\noindent \textbf{Decomposition Strategy} \S\ref{sec:decomposition_strategy}{\bf .}\label{page:decom}

\begin{itemize}
    \item[(i)] Identify minimally dependent matroids for $\Delta$ (\S\ref{sec:prime-collections-for-m=3}).
    
    \item[(ii)] For each minimally dependent matroid $M$, write the combinatorial closure $V_M^{\comb}$ as a minimal union of matroid varieties (\S \ref{sec:forest_like_configurations}).
    
    \item[(iii)] For each matroid variety appearing in step (ii), show that it is irreducible (\S\ref{sec:irred_configuration}).
    
    \item[(iv)] Determine redundancy of non-central components of combinatorial closures in the resulting decomposition and show the matroids are realizable (\S\ref{sec:comb_closure_forestlike} and \S\ref{sec:forestlike_realizable}).
   \end{itemize}

We are able to completely go through this strategy for various family of hypergraphs in \S\ref{sec:irred_configuration}-\S\ref{sec:Delta_st_main}. In particular, for Step~(iii) above, we show that the realization spaces of certain point and line configurations are irreducible; see \S\ref{sec:irred_configuration}. 
These are a family of matroids whose varieties appear as irreducible components of many examples of 
hypergraph varieties. We will prove Theorem~\ref{thm:lineArrangementBuildUpGeneral} which allows us to inductively build up configurations with irreducible varieties, and use this to show that all configurations with at most $6$ lines have irreducible varieties.

\begin{theorem*}[Theorems~\ref{thm:k=6_irreducible} and~\ref{thm:lineArrangementBuildUpGeneral}]
Suppose that $\MC'$ is a point and line configuration with irreducible realization space $\Gamma_{\MC'}$. If $\MC$ is obtained from $\MC'$ by adding a single line passing through at most two intersection points of $\MC'$, then $\Gamma_\MC$ is irreducible. In particular, the realization spaces of configurations with at most $6$~lines~are~irreducible.
\end{theorem*}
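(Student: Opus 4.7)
The plan is to prove the two statements in turn: the inductive build-up theorem first, followed by the $\leq 6$ lines conclusion by induction on the number of lines using a double-counting argument.

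For the build-up step, I would consider the forgetful morphism $\pi\colon \Gamma_\MC \to \Gamma_{\MC'}$ sending a realization of $\MC$ to its restriction to the points of $\MC'$; equivalently, $\pi$ forgets the new line $\ell$ together with any points of $\MC$ whose only recorded incidence is with $\ell$. Let $s \in \{0,1,2\}$ denote the number of intersection points of $\MC'$ lying on $\ell$. Given $x \in \Gamma_{\MC'}$, the fiber $\pi^{-1}(x)$ parameterizes extensions of $x$ to a realization of $\MC$: one chooses a line in $\mathbb{P}^2$ through the $s$ specified points (a $\mathbb{P}^{2-s}$ worth of choices), and for each new point on $\ell$ not forced to be an intersection of $\ell$ with an old line of $\MC'$, a point on $\ell$ (a $\mathbb{P}^1$ worth of choices); new intersections of $\ell$ with old lines are then determined. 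Hence $\pi^{-1}(x)$ is a nonempty open subset of $\mathbb{P}^{2-s} \times (\mathbb{P}^1)^r$ with $r$ independent of $x$, in particular irreducible of constant dimension. Since $\pi$ is dominant with irreducible equidimensional fibers over the irreducible base $\Gamma_{\MC'}$, $\Gamma_\MC$ is irreducible.

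For the conclusion about at most six lines, I would induct on the number of lines $k$, with $k \leq 1$ trivial. Call a point of $\MC$ \emph{heavy} if at least three lines of $\MC$ meet there, and set $h(\ell)$ to be the number of heavy points on $\ell$. A point on $\ell$ becomes an intersection point of $\MC \setminus \{\ell\}$ precisely when it is heavy in $\MC$, so any line with $h(\ell) \leq 2$ can be peeled off by the build-up theorem and the inductive hypothesis. To exhibit such a line, assume for contradiction that $h(\ell) \geq 3$ for every $\ell$; write $H$ for the set of heavy points and $L(p)$ for the number of lines through $p \in H$. Two double-counts give
\[
3k \leq \sum_\ell \binom{h(\ell)}{2} \leq \binom{|H|}{2}, \qquad 3|H| \leq \sum_{p\in H}\binom{L(p)}{2} \leq \binom{k}{2},
\]
where the left inequalities use $h(\ell), L(p) \geq 3$ and the right inequalities use that two points determine a unique line and two lines meet in a unique point. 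So $|H|(|H|-1) \geq 6k$ while $|H| \leq k(k-1)/6$, which is inconsistent for $3 \leq k \leq 6$ (e.g.\ for $k=6$ this forces $|H| \geq 7$ and $|H| \leq 5$). Hence the required $\ell$ exists, and the induction closes.

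The main obstacle, as I see it, is verifying the fiber analysis in the build-up step: one needs that $\pi$ is dominant (every realization of $\MC'$ admits at least one extension to $\MC$) and that the fiber dimension is constant across $\Gamma_{\MC'}$, which amounts to checking that the extension constraints impose the expected codimension generically---e.g., the two prescribed intersection points do not collide and the freshly-placed new points indeed avoid the non-generic loci on $\ell$. Once this is settled, the double-counting pigeonhole and the overall induction are routine.
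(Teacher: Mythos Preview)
Your proposal is essentially correct, and the underlying idea for the build-up step matches the paper's: both exploit that a realization of $\MC$ is a realization of $\MC' = \MC \setminus \ell$ together with extension data for $\ell$. The paper, however, runs the argument in the opposite direction. Rather than projecting $\Gamma_\MC \to \Gamma_{\MC'}$ and invoking a fiber-dimension criterion, it builds an explicit irreducible parameter space $X'$ (an open subset of $\Gamma_{\MC'} \times \Hom_\CC(\CC^2,\CC^3) \times (\CC^2)^r \times \CC^m$, the three cases $s=0,1,2$ differing only in how many columns of the $\Hom$ factor are pinned to prescribed intersection points) and a polynomial map $\psi\colon X' \to (\CC^3)^N$ whose image contains $\Gamma_\MC$ as an open subset. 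The Grassmann--Cayley meet furnishes polynomial formulas for the intersection vectors $q_i$ of $\ell$ with the old lines, so $\psi$ is visibly a morphism. This sidesteps precisely the obstacle you flag: once one has a surjection from an irreducible source, dominance of $\pi$, constancy of fiber dimension, and the ``irreducible equidimensional fibers over irreducible base'' theorem are no longer needed. Two small corrections to your fiber picture: the realization space sits in $\CC^{3\times n}$ (not $\mathbb{P}^2$), so the fibers are open in affine products rather than $\mathbb{P}^{2-s}\times(\mathbb{P}^1)^r$; and each point where $\ell$ meets a single old line contributes an extra $\CC$-factor for the scalar on the intersection vector, which your count omits.

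For the ``at most six lines'' conclusion, your double-count is valid but more elaborate than required. The paper observes directly that if some line $\ell$ carries three heavy points $p_1,p_2,p_3$, then the $\ge 2$ additional lines through each $p_i$ are pairwise distinct (two distinct lines meet in at most one point, so a line through $p_i$ other than $\ell$ cannot pass through $p_j$), yielding at least $1+3\cdot 2=7$ lines in total. Hence for $k\le 6$ every line has at most two heavy points and the inductive step applies immediately.
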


The conclusion of the decomposition strategy is the following characterization of the minimal irreducible decomposition of the associated variety of each consecutive forest hypergraph.

\begin{theorem*}[Theorems~\ref{thm:forestlike_comp} and~\ref{thm:adj-tree-decomp}]
The irredundant irreducible components of the consecutive forest variety $V_{\Delta_G}$ are in one-to-one correspondence with the minimally dependent matroids for the hypergraph $\Delta_G$. In particular, for each minimally dependent matroid, 
the non-central components of its combinatorial closure are completely characterized and are redundant in the irreducible decomposition.
\end{theorem*}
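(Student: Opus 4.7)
The plan is to execute the four-step Decomposition Strategy laid out on page \pageref{page:decom} for the family of consecutive forest hypergraphs, so that all four items feed into the two conclusions of the theorem: (1) the central components of combinatorial closures give exactly the irreducible components, and (2) the non-central components are absorbed by other central components. The starting point is Proposition~\ref{prop:comb_decomposition}, which already writes
\[
 V_{\Delta_G} \;=\; \bigcup_{M} V_M^{\comb},
\]
with the union over minimally dependent matroids $M$ for $\Delta_G$. Thus the task splits into (a) giving a concrete list of all minimally dependent matroids for $\Delta_G$, (b) decomposing each $V_M^{\comb}$ into irreducibles, and (c) pruning the resulting family to an irredundant one.

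For step (a), I would use the classification of minimally dependent matroids for $\Delta_G$ carried out in \S\ref{sec:prime-collections-for-m=3}. For step (b), for each such $M$ I would apply the analysis in \S\ref{sec:forest_like_configurations} to express $V_M^{\comb}$ as a union of a central matroid variety $V_M$ together with a controlled list of non-central matroid varieties $V_{N_1}, \dots, V_{N_r}$ corresponding to the point–line configurations that strictly refine $M$'s circuit structure. Irreducibility of each $V_{N_i}$, and of $V_M$ itself, is obtained via the inductive build-up criterion of Theorem~\ref{thm:lineArrangementBuildUpGeneral}: the consecutive forest configurations are assembled line by line, each new line meeting at most two existing intersection points, so the irreducibility in \S\ref{sec:irred_configuration} applies. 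Realizability of the central matroids over $\mathbb{R}$ follows from the explicit constructions of \S\ref{sec:forestlike_realizable}, which also guarantees that $V_M$ is a genuine top-dimensional irreducible component of $V_M^{\comb}$ rather than collapsing to a lower-dimensional set.

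Step (c) is where the main work lies and will likely be the principal obstacle. I would show that every non-central component $V_{N_i}$ appearing inside $V_M^{\comb}$ is actually contained in $V_{M'}$ for some other minimally dependent matroid $M'$ whose central component is already in the list. Concretely, for each non-central $N_i$ one reads off which additional dependencies are forced by the extra coincidences in the corresponding configuration; these extra dependencies enlarge the circuit set of $M$ to a new matroid $M'$ whose dependent sets still contain $\Delta_G$, and which is realizable. Thus $V_{N_i} \subseteq V_{M'}$ and $V_{N_i}$ is redundant. Combined with the fact, also to be verified, that no two distinct minimally dependent matroids $M \neq M'$ satisfy $V_M \subseteq V_{M'}$ — which follows because inclusion of matroid varieties forces a containment of dependent sets contradicting minimality — this yields the claimed one-to-one correspondence between irreducible components of $V_{\Delta_G}$ and minimally dependent matroids.

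The main obstacle I anticipate is the bookkeeping in step (c): there can be many non-central components inside a single $V_M^{\comb}$, each coming from a different way of imposing extra collinearities on the configuration, and one must exhibit for \emph{every} one of them an explicit enlarging minimally dependent matroid $M'$. The consecutive/forest structure of $\Delta_G$ should make this tractable, since the tree-like incidence pattern constrains which extra collinearities can occur and allows a case analysis along the leaves (or along the last edge added in the consecutive ordering). I would carry out this case analysis in the same inductive framework used for Theorem~\ref{thm:lineArrangementBuildUpGeneral}, so that the redundancy proof runs in parallel with the irreducibility proof, ensuring that both halves of the theorem are established simultaneously.
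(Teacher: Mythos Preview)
Your overall architecture matches the paper's: start from Proposition~\ref{prop:comb_decomposition}, classify the minimally dependent matroids via \S\ref{sec:prime-collections-for-m=3}, decompose each combinatorial closure, prove irreducibility via \S\ref{sec:irred_configuration}, realizability via \S\ref{sec:forestlike_realizable}, and then absorb the non-central pieces. Your irredundancy argument for the central components is also essentially the paper's (though the paper leaves it implicit): $V_M \subseteq V_{M'} \subseteq V_{M'}^{\comb}$ forces $M' \le M$, and then minimality gives $M=M'$.

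The one place where your plan diverges from what actually happens is the description of the non-central components and how they are handled. You write that each non-central piece ``comes from a different way of imposing extra collinearities on the configuration.'' In the paper's Theorem~\ref{thm:forestlike_comp}, the non-central components of $V_M^{\comb}$ for a forest-like configuration are \emph{not} produced by new collinearities at all: they arise precisely by setting to loops a subset of the points $p$ with $|\mL_p|\ge 3$. This is established by the perturbation procedure (Lemma~\ref{lem:perturbation_procedure}), which shows that any extra collinearity, coincidence, or low-rank degeneration can be perturbed away \emph{within} the central realization space, so long as the high-degree intersection points remain nonzero; the only obstructions are exactly those points becoming loops. This structural fact is what makes step~(c) tractable: in Proposition~\ref{prop:prime_reduction} the paper does not run a case analysis on collinearities in parallel with Theorem~\ref{thm:lineArrangementBuildUpGeneral}, but instead takes a cloud that has been set to loops and explicitly rewrites the prime collection (promoting an interior vertex of the cloud to a singleton and trimming adjacent clouds) to produce a new prime collection $\MS'$ with $M_{\MS'} \le M_J$. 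So your step~(c) would go through, but only after you replace ``extra collinearities'' by ``high-degree points becoming loops'' and use the perturbation argument rather than an inductive line-by-line case split.
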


Our decomposition strategy gives immediate interpretations of the prime decompositions of families of ideals for which it is not feasible to calculate an explicit Gr\"obner basis due to the time complexity of the algorithm and the current available hardware. In \cite{pfister2019primary}, which is further explained in Example~\ref{ex:andreas}, the authors calculated the prime decomposition of a hypergraph ideal which contains $16$ edges of size $3$. This computation is at the limit of what is currently possible on available hardware. In this case, the prime components of this ideal have a straight-forward interpretation in terms of configurations of $12$ points in the projective plane.

Moreover, in \S\ref{sec:Delta_st_main} we describe the hypergraphs $\Delta^{s,t}$ which arise from the study of conditional independence (CI) statements; see \S\ref{sec:applications_CI} and \cite{Fink, clarke2020conditional, ollie_fatemeh_harshit}.
We give a summary of the known cases of $V_{\Delta^{s,t}}$ in Remark~\ref{rem:Known_Results} and Table~\ref{tab:min_dep_examples}. The family of varieties $V_{\Delta^{s,t}}$ for a fixed $s, t$ can be studied effectively by understanding finite families of matroids. In Table~\ref{tab:combinatorial_types_kl}, we count the number of combinatorial types of dependent matroids for $\Delta^{s,t}$. We show that as certain parameters increase, the number of combinatorial types eventually stabilizes which simplifies the computational task.
The associated variety of $\Delta^{s,t}$ determines a statistical model corresponding to a collection of CI statements with hidden variables. 
The irreducible components of these varieties give information about additional constraints satisfied by distributions within the given CI model. We describe the irreducible components of these varieties in terms of so-called \textit{grid matroids}.
The first step to understanding such decompositions is to find the set of minimally dependent matroids for $\Delta^{s,t}$.

\smallskip

When the dimension of the ambient space $d$ is low enough, we show that:

\begin{theorem*}[Theorem~\ref{thm:s-t-3}]
If 
$t\leq d\leq s+t-3$, then there is a unique minimal matroid for $\Delta^{s,t}$.
\end{theorem*}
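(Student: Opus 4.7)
The plan is to produce an explicit candidate $M_0$ for the unique minimal matroid, drawn from the grid matroids introduced in Section~\ref{sec:Delta_st_main}, and then to argue that the hypothesis $t \leq d \leq s+t-3$ is tight enough to force every realizable matroid whose dependent sets contain the edges of $\Delta^{s,t}$ to refine $M_0$.

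First I would unpack the definition of $\Delta^{s,t}$, listing its edges in terms of the underlying $s \times t$ grid of points indexing the associated CI statement. The candidate $M_0$ should be the grid matroid whose circuits are the obvious dependency patterns forced by lines and columns of the grid; by construction these circuits contain every edge of $\Delta^{s,t}$. I would then verify realizability of $M_0$ in $\KK^d$ by placing the $t$ "column subspaces" as generic $t$-dimensional flats inside $\KK^d$ and distributing the $s$ rows in their intersections; this uses the lower bound $t \leq d$ to guarantee that each column subspace actually has dimension $t$, and the upper bound $d \leq s+t-3$ to guarantee that the relevant row-column intersections have the required positive dimension.

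For uniqueness, let $M$ be any minimally dependent matroid for $\Delta^{s,t}$ and let $x_1,\dots,x_n$ be a realization of $M$ in $\KK^d$ such that every edge of $\Delta^{s,t}$ is a dependent set. I would show by a dimension count that each circuit $C$ of $M_0$ must already be dependent for $\{x_i : i \in C\}$: the span of the points in a column of the grid has dimension at most $t$, so the span of any union of columns meeting $C$ has dimension at most $s+t-3 < |C|$ whenever $C$ is one of the grid circuits of $M_0$. This inclusion $\MD(M_0) \subseteq \MD(M)$, combined with the minimality of $M$, yields $\MD(M) = \MD(M_0)$ and hence $M = M_0$. Existence of the minimal matroid then follows by taking $M = M_0$ itself.

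The main obstacle is the dimension-count step: one must carefully enumerate the grid circuits of $M_0$ and check that each of them is forced by the edges of $\Delta^{s,t}$ under the codimension hypothesis $s+t-d \geq 3$. I expect this to reduce to a pigeonhole argument about how many independent row-or-column flats one can realize in $\KK^d$, and the slack of~$3$ in $d \leq s+t-3$ should correspond exactly to the number of "extra" incidences needed to collapse any competing minimally dependent structure onto $M_0$. The remaining verification that $M_0$ is \emph{realizable} can be deferred to the construction already sketched above and matches the realizability arguments carried out in Sections~\ref{sec:irred_configuration}--\ref{sec:forestlike_realizable}.
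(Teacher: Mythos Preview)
There are two concrete problems. First, the setup: $\Delta^{s,t}$ lives on a $k\times\ell$ grid (arbitrary $k\ge s$, $\ell\ge t$), with edges the $t$-subsets of rows and $s$-subsets of columns; there is no ``$s\times t$ grid of points,'' there are $\ell$ columns (not $t$), and a column flat has rank $s-1$ (not dimension $t$). Your realizability sketch inherits these dimension errors throughout.

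Second, and more seriously, you never pin down $M_0$, and your uniqueness argument is aimed at the wrong target. The correct candidate has circuit set exactly $\mathcal{C}=\min\bigl(\Delta^{s,t}\cup\binom{[k\ell]}{d+1}\bigr)$, i.e.\ the hypergraph edges themselves together with the $(d+1)$-subsets forced by the ambient rank bound---nothing more. The hypothesis $d\le s+t-3$ is precisely what makes $\mathcal{C}$ satisfy circuit elimination: a row-edge and a column-edge meet in a single grid point, and their union minus that point has size $s+t-2\ge d+1$, hence contains a $(d+1)$-subset. Once $\mathcal{C}$ is known to be a circuit set, uniqueness is automatic: $\mathcal{D}(M_0)$ is by construction the upward closure of $\Delta^{s,t}\cup\binom{[k\ell]}{d+1}$, so every rank-$\le d$ matroid dependent for $\Delta^{s,t}$ already sits above $M_0$. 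No dimension count, no pigeonhole, no ``competing minimally dependent structure'' to collapse. The actual content of the theorem is the \emph{realizability} of $M_0$: one must place points so that each row has rank exactly $t-1$, each column exactly $s-1$, and no further dependency of size $\le d$ appears. The paper does this by choosing generic affine $(t-1)$-flats for the rows and $(s-1)$-flats for the columns, using $s+t-d-2\ge1$ to guarantee each row--column pair of flats meets nontrivially, and then arguing that the columns can be chosen successively so that each new intersection point avoids the finitely many subspaces spanned by previously placed independent sets. That last independence verification is the delicate step, and your proposal does not address it.
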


In particular, the minimal matroid from the above theorem corresponds to the special irreducible component of the CI varieties studied in \cite{clarke2020conditional, ollie_fatemeh_harshit}, which leads to the {\em intersection axiom} for CI models. This component is particularly important for the inference problem in statistics, as it gives information about additional constraints on distributions with \textit{full support} which satisfy the given CI statements. 

When there is no bound on $d$, we show that as the parameters for this hypergraph are allowed to become large, we obtain every matroid among the minimally dependent matroids for $\Delta^{s,t}$, up to a mild equivalence.

\begin{theorem*}[Theorem~\ref{thm:hardness}]
For every matroid $M$, there exists a hypergraph of the form $\Delta^{s,t}$ and a dependent matroid $M'$ for $\Delta^{s,t}$ such that a \textit{restriction} of $M'$ is isomorphic to $M$.
\end{theorem*}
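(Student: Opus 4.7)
The plan is to view this as an encoding (universality) statement: one produces, for an arbitrary matroid $M$, parameters $s,t$ and a dependent matroid $M'$ for $\Delta^{s,t}$ whose restriction to a distinguished subset of its ground set is isomorphic to $M$. Since the family $\{\Delta^{s,t}\}$ grows as the parameters grow, the main conceptual point is that the hypergraphs $\Delta^{s,t}$ can be made ``thick'' enough to accommodate arbitrary local matroid structure on a designated subset, while the rest of the ground set is used as auxiliary space to satisfy all the edges of $\Delta^{s,t}$.

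First, I would fix a matroid $M$ on a ground set $E$ with circuit set $\MC(M)$, and choose $s,t$ sufficiently large relative to $|E|$, $|\MC(M)|$, and the maximum circuit size in $M$. Using the grid/bipartite labelling of the ground set of $\Delta^{s,t}$ reviewed in \S\ref{sec:Delta_st_main}, I would designate an injection of $E$ into this ground set in a position that allows each circuit $C \in \MC(M)$ to be contained in (or be completed inside) an edge of $\Delta^{s,t}$. When the size of $C$ is smaller than the edge sizes of $\Delta^{s,t}$, I would pad $C$ with distinct auxiliary elements, always drawn from disjoint portions of the grid so that the padding of different circuits does not interact.

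Next, I would define the candidate matroid $M'$ on the full ground set of $\Delta^{s,t}$ by taking $M$ on $E$ together with the dependencies forced by the selected edges of $\Delta^{s,t}$ on the auxiliary elements, and letting everything else be as free as possible (a generic/free extension). Concretely, this is the matroid whose circuits are those of $M$ together with circuits coming from edges of $\Delta^{s,t}$ that lie entirely in the auxiliary part. By construction, every edge of $\Delta^{s,t}$ is dependent in $M'$, so $M'$ is a dependent matroid for the hypergraph, and $M'|_E = M$ by the pigeonhole placement of circuits.

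The main obstacle I anticipate is showing that the auxiliary dependencies do not \emph{leak back} into $E$ and create extra circuits on $E$ beyond those of $M$. This is the reason the parameters $s,t$ must be taken sufficiently large: one needs enough combinatorial room so that every padding element used to complete a circuit of $M$ is isolated from both $E$ and the other padding elements with respect to the remaining edges of $\Delta^{s,t}$. I would handle this by an inductive circuit-by-circuit construction: add one circuit of $M$ at a time, recording the padding elements used and the edges of $\Delta^{s,t}$ they participate in, and pick auxiliary slots for the next circuit that avoid all previous interactions, enlarging $s$ or $t$ whenever the current parameters become insufficient. A final verification checks that the resulting $M'$ is a genuine matroid (the forced circuits are compatible with the circuit exchange axiom, which follows because they live on disjoint parts of the ground set after the injective placement) and that $M' |_E = M$.
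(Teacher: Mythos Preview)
Your proposal has a genuine gap. The requirement that $M'$ be dependent for $\Delta^{s,t}$ means that \emph{every} $t$-subset of every row $R_i$ and \emph{every} $s$-subset of every column $C_j$ is dependent in $M'$. This is incompatible with making the auxiliary elements ``as free as possible'': each auxiliary grid element lies in one row and one column and therefore belongs to a great many edges of $\Delta^{s,t}$, none of which may be independent. Enlarging $s$ or $t$ does not help---it only increases the number of edges through each element---so there is no way to ``isolate'' padding elements from the remaining edges as you suggest. Your circuit-by-circuit construction, with generic auxiliary elements, would leave most row and column edges independent and hence fail to produce a dependent matroid for $\Delta^{s,t}$ at all.

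The remedy goes in the opposite direction: make the auxiliary elements \emph{loops} rather than free. Take $s=2$, $k=2$, $t=\rank(M)+1$, and $\ell\ge\max\{|E|,t\}$; embed $E$ in the first $|E|$ positions of the row $R_1$; and declare every remaining grid element to be a loop of $M'$. Then every $2$-subset of a column contains an element outside $R_1$, hence a loop; every $t$-subset of $R_1$ either has cardinality exceeding $\rank(M)$ inside $E$ or contains one of the added loops; and rows $R_i$ with $i\ge2$ consist entirely of loops. Thus $\Delta^{s,t}\subseteq\MD(M')$, while $M'|_E=M$ because adjoining loops never creates new circuits on $E$. No padding or disjointness bookkeeping is needed, and there is nothing to ``leak back''.

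For comparison, the paper establishes the sharper form recorded as Theorem~\ref{thm:hardness}: such an $M'$ can be reached from the clutter $\Delta^{s,t}$ by a finite sequence of $\alpha_1$- and $\alpha_2$-transformations in the sense of Mart\'{i}-Farr\'{e}. That is what ties the construction to the algorithm for generating minimally dependent matroids, and it requires the more elaborate inductive argument through the intermediate clutters $\Lambda_p(M)$ in \S\ref{sec:general}; the target matroid there is essentially the ``loops outside $R_1$'' matroid above, but the content lies in showing it arises from the transformation procedure.
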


We prove this theorem using an algorithm from \cite{M14} to explicitly construct the matroid $M'$. This theorem shows that understanding the varieties of a general CI model is very difficult since matroid varieties satisfy various \textit{universality} results. Similarly, determining the realizability of a CI model is difficult as the matroids of its components might not be 
 $\mathbb{R}$-realizable. More generally, we propose the following computational problem.
\begin{question}\label{question}
Find the irreducible components of $V_{\Delta^{s,t}}$ and determine whether they contain a rational point.
\end{question}

\smallskip 

We conclude by 
highlighting some of the difficulties arising in the study of Question~\ref{question}, in particular in determining the irreducibility of matroid varieties. 
Following \cite{bokowski1989computational, richter1999universality}, consider a collection of vectors $v_1,\ldots,v_s$ in a finite dimensional vector space $V$ over a field $\mathbb{K}$, which realizes a matroid $M$. They also define a hyperplane arrangement $H_{v_1},\ldots,H_{v_s}\subset V^*$ in the dual vector space. The combinatorial type of this hyperplane arrangement is defined by $M$, thus, one can think of the realization space $\Gamma_M$ as a parameter space of hyperplane arrangements of fixed combinatorial type.
In \cite{rybnikov2011fundamental}, Rybnikov constructs a pair of combinatorially equivalent hyperplane arrangements whose complements have non-isomorphic fundamental groups. So, in general, it is not possible to study matroid varieties by picking a single generic point in the variety. 

Unfortunately, very little can be said about the geometry of matroid varieties in general. The Mn\"ev--Sturmfels Universality Theorem shows that matroid varieties satisfy \textit{Murphy’s Law in Algebraic Geometry}. Specifically, given any singularity, appearing in a semi-algebraic set, there is a matroid variety with the same singularity up to a mild equivalence; see \cite{mnev1985manifolds, mnev1988universality, sturmfels1989matroid, lee2013mnev}. In fact, point and line configurations already exhibit all singularities. More precisely, the equivalence above is defined on pointed schemes generated by $(X, p) \sim (Y, q)$ when there exists a smooth morphism $(X, p) \to (Y, q)$. 
Then, the universality theorem states that point and line configurations exhibit all equivalence classes of this relation called singularity types. 

\medskip

We also note that the defining equations of matroid varieties are in general very difficult to calculate. These equations often arise from geometric constraints satisfied by the matroid. Some of them can be interpreted as rank conditions on certain submatrices, however this is not true in general. 
Example~\ref{ex:combthreelines} shows the smallest matroid for which \textit{non-determinantal} equations appear. Proving that such a polynomial constraint holds is often achieved by finding an equivalent condition in the Grassmann-Cayley algebra \cite{sitharam2017handbook, sidman2019geometric}. In the context of CI models, such conditions give further constraints on distributions satisfying the given CI statements.

\medskip\noindent {\bf Outline of paper.} 
In \S\ref{sec:pre},  
we introduce the key concepts used throughout the paper. In particular,  
we fix our notations for hypergraph, matroid varieties and introduce the dependence order on matroids. In \S\ref{sec:decom}, we prove the decomposition theorem 
and introduce the combinatorial closures of matroid varieties. In \S\ref{sec:irred_configuration}, we define point and line configurations and prove that, for certain families, their realization spaces are irreducible. In \S\ref{sec:forest_like_configurations}, we study the family of forest-like point and line configurations. We give a complete characterization of the irreducible components of their combinatorial closures using perturbation arguments. In \S\ref{sec:adjtree}, we apply the decomposition strategy to the family of consecutive forest hypergraphs. As a result, we prove Theorem~\ref{thm:adj-tree-decomp} which gives an irredundant irreducible decomposition of the hypergraph variety.
In \S\ref{sec:Delta_st_main}, we introduce grid matroids and apply them to describe the irreducible components of the variety of $\Delta^{s,t}$; see Tables~\ref{tab:combinatorial_types_kl} and \ref{tab:min_dep_examples}. We then proceed to use algorithmic procedures to prove Theorem~\ref{thm:hardness}. In \S\ref{sec:Applications}, we show how the hypergraph $\Delta^{s,t}$ arises in algebraic statistics in the study of CI statements with hidden variables. We conclude by explaining how our results may shed light on a conjecture by Matúš in the context of CI models.

\section{Preliminaries}\label{sec:pre}

\subsection{Hypergraph varieties.}\label{sec:hypergraph}

Let $\mathbb{K}$ be a field, $d\leq n$ be two positive integers, 
$X = (x_{ij})$ be a $d\times n$ matrix of indeterminates and $R =\mathbb{K}[X]$ be the polynomial ring over $\mathbb{K}$ in the indeterminates $x_{ij}$. It is often convenient to write determinants of submatrices of $X$ as $[I | J]_X$ where $I$ and $J$ are respectively the sets of rows and columns of the submatrix. If $I = [d]$, that is the submatrix covers all rows of $X$, then we write $[J]$ for $[I | J]_X$.
We denote by $x_i$ the $i^{\rm th}$ column of $X$ and by $X_F$ the submatrix of $X$ with columns indexed by $F\subseteq[n]$.

\begin{definition}
\label{def:hypergraph}
A (simple) hypergraph $\Delta$ on the vertex set
$[n]$ is a subset of the power set $2^{[n]}$. We assume that no proper subset of an element of $\Delta$ is in $\Delta$.
The elements of $\Delta$ are called (hyper)edges.  
\begin{itemize}
    \item The \emph{determinantal hypergraph ideal} of $\Delta$ is
  \begin{equation*}
    I_{\Delta}= \big\langle [A|B]_X : A\subseteq [d], B\in\Delta, |A| = |B|  \big\rangle \subset R.
  \end{equation*}
  \item The variety of $\Delta$ is the zero set of $I_{\Delta}$ which is given by 
  \[
  V_{\Delta}=\{X\in \mathbb{C}^{d\times n}:\ \rk(X_F) < |F| \text{ for each }F \text{ in } \Delta\}.
  \] 
  \end{itemize}
\end{definition}
The ideal $I_{\Delta}$ and its variety $V_{\Delta}$ depend on the value $d$, i.e.~the dimension of the ambient space. However, to keep our notation concise, we suppress $d$ from our notation. Unless otherwise stated, all results for hypergraph ideals and their varieties hold for all $d$ as long as $d \ge \max\{|F| : F \in \Delta\}$.

\begin{problem}\label{P1} 
Find an irredundant irreducible decomposition of $V_{\Delta}$ for any hypergraph $\Delta$. 
\end{problem}

\subsection{Matroid varieties.} \label{sec:pre_mat}
In this subsection, we recall the definitions of the realization space of a matroid and its associated variety. We refer the reader to \cite{Oxley} for basic definitions concerning matroids.
\begin{definition}\label{def:prelim_realisation}
Let $M$ be a matroid on $[n]$ of rank $r$ and let $d \ge r$. If $\mathbb{K}$ is a field, a \emph{realization} of $M$ in $\mathbb{K}^d$ is a collection of vectors $X = \{x_1, \dots, x_n \} \subset \mathbb{K}^d$ such that 
\[
\{x_{i_1}, \dots, x_{i_p}\} \subset X \text{ is linearly dependent} \iff \{i_1, \dots, i_p \} \text{ is a dependent set of } M.
\]
If such a collection of vectors exists, we say that the matroid is \emph{realizable} over $\mathbb{K}$. (Other words used interchangeably with \emph{realizable} include \emph{representable} and \emph{linear}.) In this paper, if realizability is discussed without specifying $\mathbb{K}$, then $\mathbb{K}=\CC$.
The \emph{realization space} of $M$ in $\CC^d$ is
\[
\Gamma_{M} = \{X \subset \CC^d : X \text{ is a realization of } M \}.
\]
\end{definition}
So each element of $\Gamma_{M}$ is naturally identified with a $d \times n$ matrix over $\CC$.

\begin{definition}
The \emph{matroid variety} $V_{M} = \overline{\Gamma_{M}}$ is the Zariski closure of the realization space of $M$. We denote  $I_{M} = I(V_{M}) \subseteq \CC[X]$ for the corresponding ideal where $X = (x_{i,j})$ is a $d \times n$ matrix of indeterminates.
\end{definition}

Note that $I_{M}$ is a radical ideal. Similarly to hypergraph ideals and their varieties, we will always use $d$ to denote the dimension of the ambient space. In order to simplify our notation, we suppress $d$ from the notation of the realization space of matroids and their ideals and varieties.

\subsection{Minimal matroids.}\label{sec:min}

We denote by $\le$ the partial order on sets given by inclusion. We extend this notion to matroids by identifying them with their collection of dependent sets. We will denote the collection of dependent sets of a matroid $M$ by $\MD(M)$. So, given two matroids $M_1$ and $M_2$, if  $\MD(M_1) \subseteq \MD(M_2)$ then we write $M_1 \le M_2$. This is the \emph{dependency order} on matroids. However, we caution the reader that this is precisely opposite of the \emph{weak order} 
in the matroid literature \cite{Oxley, bokowski1989computational}.
Suppose that $M$ is a matroid on ground set $E$. For any given collection of subsets $\MD$ of $E$, we say that $M$ is \emph{dependent} for $\MD$ if $\MD \subseteq \MD(M)$ and write $\MD \le M$. If there does not exist a matroid $N$ on $E$ such that $\MD\subseteq\mathcal{D}(N)\subsetneq \mathcal{D}(M)$, then we say that $M$ is \emph{minimally dependent} for $\MD$, i.e.~$M$ is a smallest matroid such that $\MD \le M$.

\begin{example}
Any matroid $M$ is minimally dependent for its collection of circuits $\mathcal{C}$. In this case, $M$ is the unique such matroid. In general, there can be many different minimally dependent matroids for a collection $\MD$. For example, if $E=[5]$, and $\MD=\{1234,1235\}$, then the uniform matroid $U^3_5$, whose circuits are all $4$-subsets of $E$, and the matroid with a single circuit $123$ are both minimally dependent for $\MD$; see Figure~\ref{fig:Minimally Dependent}.

\begin{figure}[ht]
\centering
\includegraphics[scale=0.6]{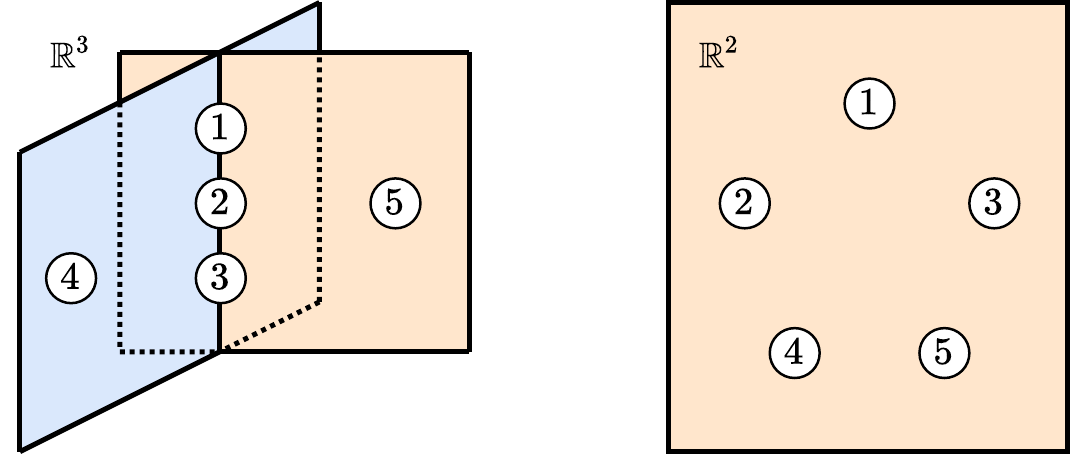}
\caption{Geometric representations for two {\em minimally dependent matroids} for $\MD=\{1234,1235\}$. 
}
\label{fig:Minimally Dependent}
\end{figure}
\end{example}

We will be interested in finding out the minimal matroids for a given collection of sets. In general, this is difficult. However, we are able to do this for some specific families; see \S\ref{sec:adjtree}.

\begin{notation}
We denote the corresponding simple hypergraph of any collection $\mathcal{D}$ of subsets of $[n]$ as:
\begin{eqnarray}\label{def:min}
\min(\mathcal{D})=\{D\in\MD: \textrm{ no set } D'\in\MD \textrm{ exists such that } D'\subsetneq D\}.
\end{eqnarray}
\end{notation}

\begin{remark}
\label{rem:big-circuits}
Let $\Delta'$ be a simple hypergraph all of whose edges have size at most $d$, and let $\Delta=\min(\Delta'\cup\binom{[n]}{d+1})$. 
Note that $I_{\Delta'}=I_{\Delta}$ and that $V_{\Delta'}=V_{\Delta}$. Moreover, $\Gamma_{M}=\emptyset$ if $M$ has rank greater than $d$. Therefore,  
finding the minimally dependent matroids of rank at most $d$ for $\Delta$ is  equivalent to finding 
those for $\Delta'$.
\end{remark}

\section{Matroid stratifications of hypergraph varieties}\label{sec:decom}
\subsection{The general decomposition theorem.}
We now use our notation from \S\ref{sec:pre} to decompose any hypergraph variety into certain matroid varieties. In subsequent sections, we will investigate techniques which will allow us to strengthen this statement to an irredundant irreducible decomposition. The following theorem may be interpreted from the perspective of the stratification of Grassmannians by matroid varieties.~See~Remark~\ref{remark: decomposition via stratification}.

\begin{theorem}[Decomposition Theorem]\label{thm:general_intersection}
Let $\Delta \subseteq 2^{[n]}$ be a hypergraph on $[n]$ and let $\MM(\Delta)$ be the collection of realizable matroids whose collections of dependent sets contain $\Delta$. Then,
\[
V_{\Delta} = \bigcup_{M \in \MM(\Delta)} V_{M}.
\]
In particular, we have $\sqrt{I_{\Delta}} = \bigcap_{M \in \MM(\Delta)} I_{M}$.
\end{theorem}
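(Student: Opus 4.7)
The plan is to prove the two inclusions separately, then deduce the ideal statement from the Nullstellensatz.

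For the inclusion $V_\Delta \supseteq \bigcup_{M\in\MM(\Delta)} V_M$, I would fix a realizable matroid $M$ with $\Delta\subseteq\MD(M)$ and take any realization $X\in\Gamma_M$. By the definition of a realization, for each edge $F\in\Delta\subseteq\MD(M)$, the columns $\{x_i : i\in F\}$ are linearly dependent, so $\rk(X_F) < |F|$. Hence $\Gamma_M\subseteq V_\Delta$. Since $V_\Delta$ is Zariski-closed (cut out by $I_\Delta$), taking closures gives $V_M=\overline{\Gamma_M}\subseteq V_\Delta$, and then the union over $M\in\MM(\Delta)$ is contained in $V_\Delta$.

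For the reverse inclusion $V_\Delta\subseteq\bigcup_{M\in\MM(\Delta)} V_M$, the key construction is the matroid $M_X$ associated to an arbitrary point $X\in V_\Delta$. Namely, declare $\{i_1,\dots,i_p\}\subseteq[n]$ to be dependent in $M_X$ precisely when the columns $\{x_{i_1},\dots,x_{i_p}\}$ of $X$ are linearly dependent in $\mathbb{K}^d$. This is a matroid because linear dependence of finite vector collections always is, and $M_X$ is tautologically realizable over $\mathbb{K}$ with $X$ itself as a realization, so $X\in\Gamma_{M_X}$. The condition $X\in V_\Delta$ says exactly that $\rk(X_F)<|F|$ for every $F\in\Delta$, i.e.~every edge of $\Delta$ is a dependent set of $M_X$, so $\Delta\subseteq\MD(M_X)$ and hence $M_X\in\MM(\Delta)$. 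Therefore $X\in\Gamma_{M_X}\subseteq V_{M_X}\subseteq \bigcup_{M\in\MM(\Delta)} V_M$.

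The ideal statement follows formally: the Nullstellensatz gives $\sqrt{I_\Delta}=I(V_\Delta)$, and since each $V_M$ is an irreducible or at least Zariski-closed subvariety whose vanishing ideal is the radical ideal $I_M$, the standard identity $I(\bigcup_\alpha W_\alpha)=\bigcap_\alpha I(W_\alpha)$ applied to the decomposition just proved yields $\sqrt{I_\Delta}=\bigcap_{M\in\MM(\Delta)} I_M$.

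The proof is essentially a bookkeeping exercise and I do not expect any real obstacle; the only mild subtlety is to record that the collection $\MM(\Delta)$ may be infinite (matroids of every rank up to $d$ arise), so one must be comfortable taking an arbitrary, possibly infinite, union of varieties on the right-hand side. The substantive content of the theorem is the observation that every matrix $X$ literally \emph{is} a realization of the matroid it determines, which makes the stratification by $M_X$ exhaust $V_\Delta$ on the nose before any closure is taken.
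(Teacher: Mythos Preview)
Your proof is correct and follows essentially the same approach as the paper: show $\Gamma_M\subseteq V_\Delta$ (equivalently $I_\Delta\subseteq I_M$) for each $M\in\MM(\Delta)$ and take closures, then for the reverse inclusion observe that any $X\in V_\Delta$ is itself a realization of the matroid $M_X$ it determines, which lies in $\MM(\Delta)$. One small correction to your closing remark: since there are only finitely many matroids on the fixed ground set $[n]$, the collection $\MM(\Delta)$ is in fact finite, so no care about infinite unions is needed.
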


\begin{proof}
Let $M \in \MM(\Delta)$. We begin by showing that $I_{\Delta}\subseteq I_{M}$. 
By definition, $I_{\Delta}$ is generated by minors, so let $f \in I_{\Delta}$ be any such minor. We may assume without loss of generality that $f$ is a maximal minor of the submatrix $X_F$ for some $F \in \Delta$. For each matroid $M\in \MM(\Delta)$ and each point $A$ in the realization space $\Gamma_{M}$, the columns of $A_F$ are linearly dependent since $F$ is a dependent set of $M$. Thus $f$ vanishes on $\Gamma_{M}$, and by definition, $f$ vanishes on its Zariski closure which is $V_{M}$. Thus, we have that $f \in I(V_{M}) = I_{M}$, so $I_{\Delta} \subseteq I_{M}$. Hence, $V_{M}\subseteq V_{\Delta}$. 
Let $A$ be any point in the variety $V_{\Delta}$. The matrix $A$ is a realization of a matroid, which we denote $M_A$, on $[n]$ in $\CC^d$. For each $F \in \Delta$, the ideal $I_{\Delta}$ contains all $|F|$-minors of $X_F$, so all $|F|$-minors of $A_F$ vanish. Therefore, the columns of $A_F$ are linearly dependent. Hence $F$ is a dependent set in $M_A$, so $M_A \in \MM(\Delta)$.
\end{proof}

\vspace{-4mm}

\subsection{Combinatorial closure.}
\label{sec:Combinatorial closure}
In Theorem~\ref{thm:general_intersection}, we decomposed the hypergraph variety $V_\Delta$ into matroid varieties. But this decomposition may have redundant components, because $V_{M'} \subset V_M$ for some $M, M'\in \MM(\Delta)$. To solve Problem~\ref{P1}, we must determine the matroids that are necessary in the decomposition. It is clear that all matroids which are minimal with respect to the dependency order are necessary in the decomposition. One could conjecture that these are enough. Unfortunately this is \emph{not} true, as we will see in Example~\ref{ex:combthreelines}. Nevertheless, we will define an alternative decomposition for which the minimal matroids correspond to irredundant parts of a decomposition. We do this by introducing the \emph{combinatorial closure} of a matroid. This is a generalization of the \textit{weak realization space} of a matroid \cite[page~70]{bokowski1989computational} to an ambient space of dimension greater than the rank of the matroid.

In the following remark we connect our setting to that of the stratification of the Grassmannian by matroids.

\begin{remark}\label{remark: decomposition via stratification}
The Grassmannian $\Gr(d,n+d)$ admits a stratification $\Gr(d,n+d) =  \bigcup_{\mathcal{M}} S_{\mathcal{M}}$ by matroids \cite{gelfand1987combinatorial, sturmfels1989matroid}. Here, $S_{\mathcal{M}} := \{ x \in \Gr(d,n+d) \mid \mathcal{M}_x = \mathcal{M}\}$ is the matroid stratum of $\MM$, where $\mathcal{M}_x$ is the matroid associated to the point $x \in \Gr(d,n+d)$. Let us restrict to the standard affine open patch $U_0$ of $\Gr(d,n+d) $ where the matrices are of the form 
$
[M \ I]
$
such that $M$ is a $d \times n$ matrix and $I$ is the $d \times d$ identity matrix. We identify the ambient space $\mathbb{C}^{d \times n}$ with $U_0$ as they are isomorphic. After this identification, we can see $V(I_{\Delta})$ as the closed embedding of $U_0$ given by the vanishing of the corresponding Pl\"ucker coordinates.

For example, take $\Delta = \{12, 134 \}$ and $d = 3$. Now $V(I_\Delta)$, is a closed subset of the standard affine open patch $U_0$ of Grassmannian $\Gr(3, 7)$. The equations of $V(I_\Delta)$ in terms of Pl\"ucker coordinates are given by $P_{123} = 0, P_{124} = 0, P_{125} = 0, P_{126} = 0, P_{127} = 0, P_{134} = 0$.
So, $V(I_{\Delta})$ can be written in the following form:
\[ V(I_{\Delta})\cap \Gr(d,n+d) \cap U_0  = \bigcup_{\mathcal{M}} S_{\mathcal{M}} \cap V(I_{\Delta}) \cap U_0.\]
In this setting, we may restate Theorem~\ref{thm:general_intersection} as
$
\bigcup_{\mathcal{M}} S_{\mathcal{M}} \cap V(I_{\Delta}) \cap U_0 = 
\bigcup_{M \in \mathcal{M}(\Delta)} V_{M}.$
\end{remark}

\begin{definition}\label{def:comb_closure}
We define the \emph{combinatorial closure} $V_{M}^{\comb}$ of a matroid $M$ to be the union of all matroid varieties $V_{M'}$ for which $M'\geq M$. In other words,
$$
V_{M}^{\comb}=\bigcup_{M'\geq M} V_{M'}
$$ 
We will denote the ideal of the combinatorial closure by $I_{M}^{\comb}$.  
\end{definition}

\begin{remark}
\leavevmode
\begin{itemize}
    \item[(i)] Note that $V_{M}^{\comb}$ might not be a matroid variety itself. The inclusion $V_{M}\subseteq V_{M}^{\comb}$ holds in general, however the equality might not hold. (See Example~\ref{ex:combthreelines} below.) We will call $V_{M}$ the \emph{central component} of $V_{M}^{\comb}$ and matroid varieties which intersect the complement will be called \emph{non-central components}.
    
    \item[(ii)] The combinatorial closure is the closure of the union of realization spaces, i.e.~$V_{M}^{\comb} 
    =\overline{\bigcup_{M'\geq M} \Gamma_{M'}}$. This follows from the topological fact that a finite union of closures coincides with the closure of a union~of~sets. 
\end{itemize}
\end{remark}

The ideal  of the combinatorial closure of a matroid $M$ can be seen as follows. The collection of circuits $\MC(M)$ of $M$ can be considered as a hypergraph. So the hypergraph ideal $I_{\MC(M)}$ for a given $d$, is defined as: 
\[
I_{\MC(M)}=\big\langle [A|B]_X:\ A\subseteq [d], B\in\MC(M), |A| = |B|  \big\rangle.
\]

\begin{lemma}\label{lem:com}
For any matroid $M$ we have that
\[
V_{M}^{\comb} =V_{\MC(M)}\quad\text{ or equivalently }\quad
I_{M}^{\comb} =\sqrt{I_{\MC(M)}}\ .
\]
\end{lemma}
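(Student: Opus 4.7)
The plan is to deduce the lemma directly from the Decomposition Theorem (Theorem~\ref{thm:general_intersection}) by choosing the hypergraph to be the set of circuits $\MC(M)$ and then matching the indexing set of the resulting union with the matroids $M' \ge M$ appearing in Definition~\ref{def:comb_closure}. The equivalence of the two displayed equations is standard: by definition $I_M^{\comb} = I(V_M^{\comb})$, which is radical, so once we prove the set-theoretic equality $V_M^{\comb} = V_{\MC(M)}$ the ideal-theoretic statement follows from Hilbert's Nullstellensatz applied to $I_{\MC(M)}$.

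First, I would apply Theorem~\ref{thm:general_intersection} to the hypergraph $\Delta = \MC(M)$. This gives
\[
V_{\MC(M)} \;=\; \bigcup_{N \in \MM(\MC(M))} V_N,
\]
where $\MM(\MC(M))$ is the collection of realizable matroids $N$ whose dependent sets contain every circuit of $M$. The key combinatorial observation is that for any matroid $N$ on $[n]$, the condition $\MC(M) \subseteq \MD(N)$ is equivalent to $\MD(M) \subseteq \MD(N)$. The forward implication is the nontrivial one: any $D \in \MD(M)$ contains some circuit $C \in \MC(M)$; since $C \in \MD(N)$ and dependent sets are closed under taking supersets in any matroid, we obtain $D \in \MD(N)$. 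The reverse implication is immediate because $\MC(M) \subseteq \MD(M)$. Hence the index set $\MM(\MC(M))$ coincides with $\{ N \text{ realizable} : N \ge M\}$ in the dependency order.

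Next, I would compare this union with the definition $V_M^{\comb} = \bigcup_{M' \ge M} V_{M'}$. The definition ranges over all matroids $M'$ with $M' \ge M$, not only realizable ones. However, if $M'$ is not realizable over $\CC$ then $\Gamma_{M'} = \emptyset$ and hence $V_{M'} = \overline{\Gamma_{M'}} = \emptyset$, so such terms contribute nothing to the union. Consequently,
\[
V_M^{\comb} \;=\; \bigcup_{\substack{M' \ge M \\ M' \text{ realizable}}} V_{M'} \;=\; \bigcup_{N \in \MM(\MC(M))} V_N \;=\; V_{\MC(M)},
\]
which is the desired equality. Taking ideals and using that $I_M^{\comb}$ is by definition $I(V_M^{\comb})$, the Nullstellensatz gives $I_M^{\comb} = I(V_{\MC(M)}) = \sqrt{I_{\MC(M)}}$.

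There is no real obstacle here; the argument is essentially a bookkeeping exercise showing that indexing by circuits and indexing by the full collection of dependent sets cut out the same matroids. The only minor subtlety worth flagging explicitly in the write-up is the reduction from "all $M' \ge M$" to "all realizable $M' \ge M$" via the vanishing of empty realization spaces, since the dependency order is defined combinatorially and a priori includes non-realizable matroids.
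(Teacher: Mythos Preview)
Your argument is correct. The combinatorial observation that $\MC(M)\subseteq\MD(N)$ if and only if $M\le N$ is exactly what is needed, and the reduction from all $M'\ge M$ to realizable $M'\ge M$ via $\Gamma_{M'}=\emptyset$ is the right way to reconcile the indexing sets.

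Your route differs from the paper's. The paper proves both inclusions directly at the level of points: for $V_M^{\comb}\subseteq V_{\MC(M)}$ it takes $X$ in some $V_{M'}$ with $M'\ge M$ and checks that each minor $[A|B]_X$ with $B\in\MC(M)$ vanishes; for the reverse inclusion it observes that any $X\in V_{\MC(M)}$ realizes a matroid $M_X$ with $\MC(M)\subseteq\MD(M_X)$, hence $M_X\ge M$. You instead invoke Theorem~\ref{thm:general_intersection} as a black box and reduce the lemma to the purely combinatorial identity $\MM(\MC(M))=\{N\text{ realizable}:N\ge M\}$. Your approach is cleaner in that it avoids rerunning the point-by-point argument already carried out in the proof of Theorem~\ref{thm:general_intersection}; the paper's direct argument is essentially a specialization of that proof to $\Delta=\MC(M)$, so the two are close cousins rather than genuinely independent proofs.
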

\begin{proof}
Let $X\in V_{M}^{\comb}$ be a collection of vectors, then $X\in \Gamma_{M'}$ for some $M'\geq M$. So, any $B\in\MC(M)$ is a dependent set of $M'$, and therefore $[A|B]_X =0$ for any $A\subseteq [d]$ with $|A|=|B|$. This shows that $V_{M}^{\comb} \subseteq V_{\MC(M)}$.

Similarly, let $X$ be a point in $V_{\MC(M)}$, and let $M_X$ be the matroid represented by $X$. For any $B\in\MC(M)$ and any subset $A\subseteq [d]$ with $|A|=|B|$, we have $[A|B]_X =0$. Therefore, any $B\in\MC(M)$ is also a dependent set of $M_X$, hence $M_X\geq M$. This shows that $V_{\MC(M)} \subseteq V_{M}^{\comb}$.
\end{proof}

In general, the combinatorial closures of matroid varieties are {\em reducible}.

\begin{figure}[h]
    \centering
    \includegraphics[scale = 0.8]{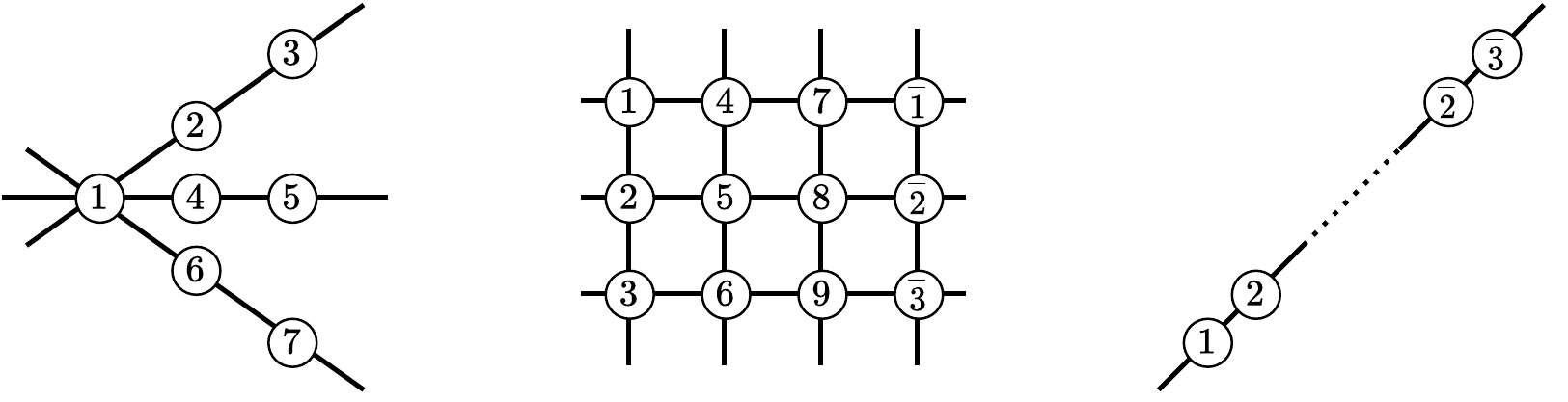}
    \caption{(Left) A realization of $M$ from Example~\ref{ex:combthreelines}.
    (Center and Right) Realizations of matroids corresponding to the prime components of $I_{\Delta}$ in Example~\ref{ex:andreas}.
    }
    \label{threelines}
\end{figure}
\vspace{-3mm}
\begin{example}\label{ex:combthreelines}
Let $d = 3$, $n = 7$ and $\Delta = \min(\{123, 145, 167 \} \cup \binom{[7]}{4})$. It is easy to check that $\Delta$ is the collection of circuits for a matroid $M$, hence  $M$ is the unique minimal matroid for $\Delta$. By Lemma~\ref{lem:com}, the ideal of the combinatorial closure is given by the radical ideal $I_{\MC(M)} = I_{\Delta} = \langle [123], [145], [167] \rangle \subseteq \CC[x_{1,1}, \dots, x_{3,7}]$.
Using Macaulay2 \cite{M2}, we find that $I_{\MC(M)}$ has two prime components given by
\[
I_{\MC(M)} = I_1 \cap I_2 =
\langle x_{1,1}, x_{2,1}, x_{3,1} \rangle \cap 
\langle [123], [145], [167], [234][567] - [235][467] \rangle\ . 
\]
By inspection, the ideal $I_1$ is the ideal of the matroid $M'$ that has a single circuit $1$. In this case, we say that $M'$ is obtained from $M$ by setting $1$ to be a loop. On the other hand, $I_2$ is the ideal of the matroid $M$ and is called the central component of the combinatorial closure. The generator $[234][567] - [235][467]$ of $I_2$ is a geometric condition satisfied by realizations of $M$. More precisely, it is a condition satisfied by six generic points which lie on three lines that intersect at a common point, as shown in Figure~\ref{threelines}.

\medskip

In Lemma~\ref{lem:star_irred_comps}, we generalize this example to an arbitrary number of lines meeting at a point.

\end{example}

\begin{remark}

We note that Example~\ref{ex:combthreelines} has been studied in \cite[Theorem~4.28]{bokowski1989computational} to resolve a problem of White, by showing that there exists a matroid $M$ and a dependent matroid $N$ (Section~\ref{sec:min}) such that $\overline{\Gamma_M} \cap \Gamma_N = \emptyset$. In Section~\ref{sec:adjtree}, we provide a family of matroids $M$ and dependent matroids $N$ such that $\overline{\Gamma_M} \cap \Gamma_N \neq \emptyset$. In particular, the perturbation procedure (Lemma~\ref{lem:perturbation_procedure}) allows us to approximate realizations of $N$ with realizations of $M$. This  gives a way to generate families of dependent matroids that give a positive answer to White's problem.

\end{remark}

We recall the definition of ideal quotients and saturation. See, e.g.~\cite[\S4]{CoxLittleOShea} for more details. 

\begin{lemma}\label{prop:saturation_quotient_radical}
Consider a polynomial ring $R = \KK[x_1, \dots, x_n]$. Let $I\subset R$ be a radical ideal and $J\subset R$ any ideal. Then $I: J = I : J^\infty$, where \[
I : J = \{f \in R : fJ \subseteq I \}\quad \text{ and }\quad
I : J^\infty = \bigcup_{i \ge 1} \, (I : J^i).
\]
\end{lemma}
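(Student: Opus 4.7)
The plan is to prove the two inclusions separately, and only one direction will require the hypothesis that $I$ is radical. For the inclusion $I : J \subseteq I : J^\infty$, I would observe that since $J$ is an ideal, $J^i \subseteq J$ for every $i \geq 1$, so any $f \in I : J$ automatically satisfies $f J^i \subseteq f J \subseteq I$, placing $f$ in $I : J^i$ and hence in the union $I : J^\infty$. This direction does not use that $I$ is radical.

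For the reverse inclusion $I : J^\infty \subseteq I : J$, the plan is to take $f \in I : J^\infty$, so by definition there exists some $i \geq 1$ with $f J^i \subseteq I$, and then to verify $f J \subseteq I$ elementwise. Pick an arbitrary $g \in J$ and note that $g^i \in J^i$, which gives $f g^i \in I$. The key manipulation is then
\[
(fg)^i \;=\; f^i g^i \;=\; f^{i-1} \cdot (f g^i) \;\in\; I,
\]
which holds because $I$ is an ideal and $f g^i \in I$. Now the radical hypothesis enters: from $(fg)^i \in I$ and $I = \sqrt{I}$ we conclude $fg \in I$. Since $g \in J$ was arbitrary, $f J \subseteq I$, i.e.~$f \in I : J$, finishing the proof.

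There is no substantive obstacle here; the only subtle point is recognizing where the radical hypothesis must be invoked. The identity $(fg)^i = f^{i-1}(f g^i)$ is the one small trick, and once spotted the chain ``$f g^i \in I \Rightarrow (fg)^i \in I \Rightarrow fg \in \sqrt{I} = I$'' closes the argument in a single line. No Gröbner-basis or primary-decomposition machinery is needed.
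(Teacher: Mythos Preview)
Your proof is correct. The paper states this lemma without proof (it is a standard exercise in commutative algebra, cf.\ the reference to \cite[\S4]{CoxLittleOShea}), so there is nothing to compare against; your argument via $(fg)^i = f^{i-1}(fg^i)\in I$ and the radical hypothesis is exactly the expected one.
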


For a matroid $M$, let us define its \emph{bases ideal} $J_M$ to be 
$$
J_M = \sqrt{\prod_{B \text{ basis of }M} \big\langle [A|B]_X:\ A\subseteq [d], |A| = |B|  \big\rangle}.
$$
If $M$ has rank $d$, then $J_M$ is 
generated by the product of all maximal minors of $X$ whose columns are bases~of~$M$.

\begin{proposition}\label{prop:saturation}
The ideal of the matroid variety can be obtained by saturating $I(V_M^{\comb})$ with respect to $J_M$:
$$
I_M = I(V_M^{\comb}):J_M^\infty.
$$
\end{proposition}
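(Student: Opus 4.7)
The plan is to reduce the proposition to a clean set-theoretic identity via standard saturation formulas, and then to carry out a short matroid-theoretic comparison. First I would invoke the general identity $V(I:J^\infty)=\overline{V(I)\setminus V(J)}$ for ideals $I,J$ in $\CC[X]$, applied with $I=I(V_M^{\comb})$ and $J=J_M$. This reduces the proposition to the set-theoretic equality
\[
V_M = \overline{V_M^{\comb}\setminus V(J_M)}.
\]
Since $I(V_M^{\comb})$ is radical (being the ideal of a variety), Lemma~\ref{prop:saturation_quotient_radical} gives that $I(V_M^{\comb}):J_M^\infty$ coincides with $I(V_M^{\comb}):J_M$ and is itself radical, hence equal to the ideal of its vanishing locus, so the set-theoretic equality upgrades to $I_M = I(V_M^{\comb}):J_M^\infty$.

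Next I would decode $V(J_M)$ combinatorially. For each basis $B$ of $M$ the vanishing locus of $\langle [A|B]_X : A\subseteq[d],\ |A|=|B|\rangle$ is exactly the set of matrices whose columns indexed by $B$ are linearly dependent. Distributing the radical over the product in $J_M$ using $\sqrt{I_1 I_2}=\sqrt{I_1}\cap\sqrt{I_2}$ yields
\[
V(J_M)=\bigcup_{B \text{ basis of }M}\bigl\{X : X_B \text{ has linearly dependent columns}\bigr\}.
\]
Combined with $V_M^{\comb}=\bigcup_{M'\ge M}V_{M'}$ from Definition~\ref{def:comb_closure}, together with the fact that any $X\in V_{M'}$ forces the matroid $M_X$ of $X$ to satisfy $M_X\ge M'$ (by taking closures of the vanishing of the minors indexed by circuits of $M'$), we conclude that a point $X\in V_M^{\comb}\setminus V(J_M)$ is precisely a matrix whose matroid $M_X$ satisfies $M_X\ge M$ and in which every basis of $M$ remains independent.

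The main step, and the only genuinely nontrivial piece, is to show that these two conditions force $M_X=M$, so that $V_M^{\comb}\setminus V(J_M)=\Gamma_M$. The inclusion $\Gamma_M\subseteq V_M^{\comb}\setminus V(J_M)$ is immediate from the definitions. For the reverse inclusion, $M_X\ge M$ means $\mathcal{D}(M)\subseteq\mathcal{D}(M_X)$, equivalently the independent sets satisfy the opposite containment, so $\rank(M_X)\le\rank(M)$; the hypothesis that every basis of $M$ is independent in $M_X$ then forces $\rank(M_X)=\rank(M)$ and promotes each basis of $M$ to a basis of $M_X$. Conversely, every basis of $M_X$ is independent in $M$ of maximal size, hence a basis of $M$. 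Thus $M$ and $M_X$ share their collections of bases and are therefore equal, giving $X\in\Gamma_M$. Taking Zariski closures yields $V_M=\overline{\Gamma_M}=\overline{V_M^{\comb}\setminus V(J_M)}$, which finishes the proof; everything outside the matroid comparison is a bookkeeping of the saturation formula and the correspondence between maximal minors and linear dependence of columns.
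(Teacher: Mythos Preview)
Your proof is correct and follows essentially the same approach as the paper: both reduce via the saturation formula to showing $V_M^{\comb}\setminus V(J_M)=\Gamma_M$, and both decode $V(J_M)$ as the locus where some basis of $M$ becomes dependent. The only difference is that the paper dispatches the final identification with ``which is by definition the realization space $\Gamma_M$,'' whereas you spell out the matroid argument (equal ranks, hence equal bases, hence $M_X=M$); your version is more careful on this point, but the route is the same.
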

\begin{proof}

Geometrically, the saturation $I(V_M^{\comb}):J_M^\infty$ corresponds to the ideal of $\overline{V_M^{\comb}\setminus V(J_M)}$; see \cite[\S4, Theorem 10(iii)]{CoxLittleOShea}. By Lemma~\ref{prop:saturation_quotient_radical}, if $I$ is a radical ideal, then $I : J = I : J^\infty$.
So it is enough to show that the difference $V_M^{\comb}\setminus V(J_M)$ is the realization space $\Gamma_M$. 
For this, notice that the variety $V(J_M)$ consists of collections of vectors for which at least one of the bases $B$ of $M$ is dependent. Therefore, the difference $V_M^{\comb}\setminus V(J_M)$ consists of collections of vectors $V_M^{\comb}$ for which all bases $B$ of $M$ are independent, which is by definition the realization space $\Gamma_M$.
\end{proof}

The above observation is shown in \cite[Proposition~2.1.3]{sidman2019geometric} 
under the identification of $\CC^{d \times n}$ with an open affine patch of the Grassmannian, as explained in Remark~\ref{remark: decomposition via stratification}.

\begin{proposition}\label{prop:comb_decomposition}
Let $\Delta \subseteq 2^{[n]}$ be a hypergraph on $[n]$ and let $\MM'(\Delta)$ be the collection of minimal realizable matroids whose dependent sets contain $\Delta$. Then
\[
\sqrt{I_{\Delta}} = \bigcap_{M \in \MM'(\Delta)} I_{M}^{\comb}
\quad \textrm{or equivalently} \quad
V_{\Delta} = \bigcup_{M \in \MM'(\Delta)} V^{\comb}_{M}.
\]
\end{proposition}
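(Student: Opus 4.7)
The plan is to derive this statement directly from the Decomposition Theorem (Theorem~\ref{thm:general_intersection}) together with the definition of combinatorial closure and a finiteness argument on the dependency order. The key observation is that $\MM'(\Delta)$ is precisely the set of minimal elements of $\MM(\Delta)$ with respect to $\le$, so Proposition~\ref{prop:comb_decomposition} should follow by reindexing the union in Theorem~\ref{thm:general_intersection} according to the minimal realizable matroid lying below each $M \in \MM(\Delta)$.

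For the containment $\bigcup_{M \in \MM'(\Delta)} V_M^{\comb} \subseteq V_\Delta$, I would fix $M \in \MM'(\Delta)$ and unpack the definition $V_M^{\comb} = \bigcup_{M' \ge M} V_{M'}$. For any realizable $M' \ge M$, the chain $\Delta \subseteq \MD(M) \subseteq \MD(M')$ shows that $M' \in \MM(\Delta)$, and therefore $V_{M'} \subseteq V_\Delta$ by Theorem~\ref{thm:general_intersection}. Non-realizable $M'$ contribute $V_{M'} = \overline{\Gamma_{M'}} = \emptyset$ and can be ignored. Taking the union over all such $M'$ and then over $M \in \MM'(\Delta)$ gives the desired inclusion.

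For the reverse containment $V_\Delta \subseteq \bigcup_{M \in \MM'(\Delta)} V_M^{\comb}$, I would start from the decomposition $V_\Delta = \bigcup_{M \in \MM(\Delta)} V_M$ provided by Theorem~\ref{thm:general_intersection}. For each $M \in \MM(\Delta)$, the set $\{ N \in \MM(\Delta) : N \le M \}$ is non-empty (it contains $M$) and finite, since matroids on $[n]$ form a finite set; hence it contains a minimal element $M_0$. Minimality inside $\MM(\Delta)$ means exactly $M_0 \in \MM'(\Delta)$, and by construction $M \ge M_0$, so $V_M \subseteq V_{M_0}^{\comb}$. Taking unions over $M \in \MM(\Delta)$ completes the inclusion, and the ideal-theoretic reformulation follows because $V_\Delta$ is the vanishing locus of $\sqrt{I_\Delta}$ and each $V_M^{\comb}$ is cut out by the radical ideal $I_M^{\comb}$ of Lemma~\ref{lem:com}.

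I do not expect serious obstacles here: the argument is essentially bookkeeping on the poset of realizable matroids. The only subtle point is the role of non-realizable matroids in the definition $V_M^{\comb} = \bigcup_{M' \ge M} V_{M'}$, but this is handled by the trivial observation $\Gamma_{M'} = \emptyset$ whenever $M'$ is not realizable, so restricting to realizable matroids when defining $\MM'(\Delta)$ is consistent with the combinatorial closure. A second small point worth flagging is that the existence of a minimal element below each $M \in \MM(\Delta)$ relies on finiteness of the matroid lattice on $[n]$; in an infinite setting one would need Zorn's lemma, but here finiteness is immediate.
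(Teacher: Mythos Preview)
Your proposal is correct and follows essentially the same approach as the paper: both reduce to Theorem~\ref{thm:general_intersection} by observing that $\MM'(\Delta)$ is the set of minimal elements of $\MM(\Delta)$ and that every $M\in\MM(\Delta)$ lies above some such minimal element, so the union of combinatorial closures over $\MM'(\Delta)$ agrees with the union of matroid varieties over $\MM(\Delta)$. The paper's proof is slightly terser (working on the ideal side via the monotonicity $M\le M'\Rightarrow I_M^{\comb}\subseteq I_{M'}^{\comb}$), while you spell out both inclusions on the variety side and explicitly address the harmless role of non-realizable matroids in the definition of $V_M^{\comb}$.
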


\begin{proof}
Following the notation of Theorem~\ref{thm:general_intersection}, let $\MM(\Delta)$ be the collection of realizable matroids whose collections of dependent sets contain $\Delta$. We have that $\MM'(\Delta)$ consists of the minimal matroids in $\MM(\Delta)$. We also have that if $M \le M'$ then $I_{M}^{\comb} \subseteq I_{M'}^{\comb}$. So it follows that,
\[
\bigcap_{M \in \MM'(\Delta)} I_{M}^{\comb} = \bigcap_{M \in \MM(\Delta)} I_{M}^{\comb}.
\]
And so the proof is complete by Theorem~\ref{thm:general_intersection}. 
\end{proof}

\subsection{Decomposition strategy.}\label{sec:decomposition_strategy}
We can now make precise the decomposition strategy detailed
in \S\ref{sec:intro}. 
By Proposition~\ref{prop:comb_decomposition}, we have a decomposition of the hypergraph variety $V_{\Delta}$ into combinatorial closures of minimally dependent matroids for $\Delta$. To upgrade this to an irreducible decomposition, there are several remaining steps. 

First, one has to find the minimal collection of matroid varieties which cover $V_M^{\comb}$. In particular, one has to show that the matroids appearing in this collection are realizable, since otherwise they are clearly redundant. In general, these matroid varieties might not be irreducible, so one has to either prove their irreducibility  (which we will do in several examples) or find their irreducible decomposition.
By going through these steps, we will obtain such decompositions 
for various families of hypergraphs. It still might be the case that some of the components are redundant. So to obtain the actual minimal decomposition, one has to check which ones are necessary.
We remark that step (ii) of the decomposition strategy, i.e.~writing $V^{\comb}_M$ as a minimal union of matroid varieties, involves showing that the matroid varieties whose union is $V_M^{\comb}$ are nonempty. In other words, we must show that the matroids are realizable. In particular, if $V_M^{\comb}=V_M$ for each minimally dependent matroid $M$ of $\Delta$, then step (ii) can be accomplished by showing that all of the minimally dependent matroids for $\Delta$ are realizable.

\begin{example}[Fano plane] \label{example: fano plane}
Let $d = 3$ and consider the hypergraph
$\Delta = \{
124, 136, 157, 235, 267, 347, 456
\}$. It is straightforward to check that $\Delta$ is the collection of circuits for a matroid $M$, which is called the Fano plane. The matroid $M$ is not realizable over $\CC$ so its realization space $\Gamma_M$ is empty. However its combinatorial closure is non-empty and we can compute the associated prime ideals of $I_\Delta$ in \texttt{Macaulay2} \cite{M2}. We find that $I_\Delta$ has $22$ associated primes which are all matroid varieties of $4$ combinatorial types of point and line configurations; see Figure~\ref{fig:fano_components}. The configurations in the figure, from left to right, are:
\begin{itemize}
    \item A single line with $7$ points. The ideal is a hypergraph ideal $I_{\Delta_0}$ where $\Delta = \binom{[7]}{3}$.
    
    \item A \textit{quadrilateral set}, see \cite[\S8]{richter2011perspectives}, together with a loop. There are $7$ associated primes of $I_\Delta$ with this combinatorial type which are parametrised by the point of the Fano plane to be taken as the loop.
    
    \item A line with $3$ points together with a free point with $4$-labels. There are $7$ associated primes of $I_\Delta$ with this combinatorial type which are parametrised by the lines of the Fano plane.
    
    \item A line with $3$ points together with a free point. Each point on the line has two labels. There are $7$ associated primes of $I_\Delta$ with this combinatorial type which are parametrised by the point of the Fano plane to be taken as the free point.
\end{itemize}

\begin{figure}
    \centering
    \includegraphics[scale = 0.8]{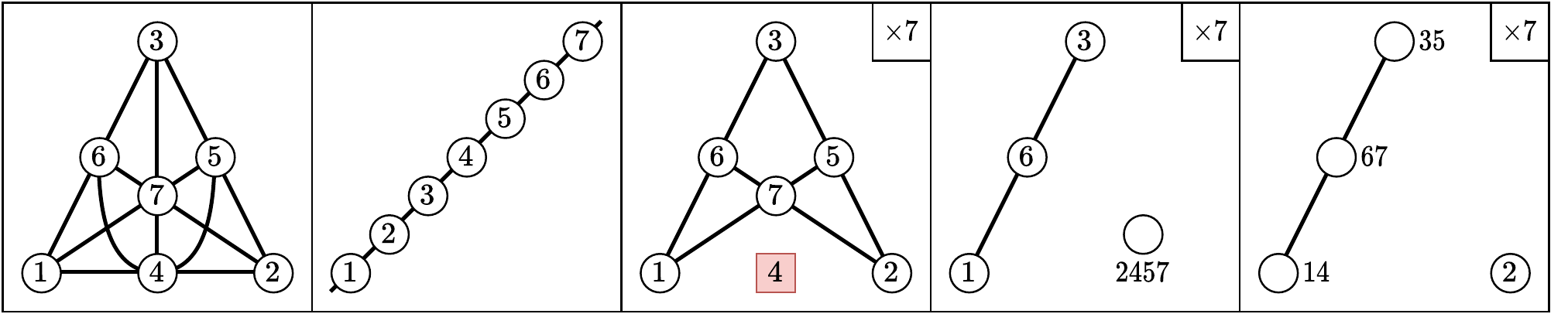}
    \caption{The Fano plane (left) and the configurations of the associated primes of its combinatorial closure.}
    \label{fig:fano_components}
\end{figure}

\end{example}

\begin{example}
\label{ex:andreas}
Let $d = 3$ and $E=[9]\cup\{\bar{1},\bar{2},\bar{3}\}$. Let  $\Delta$ be the hypergraph
$$\Delta=\{123, 456, 789,\bar{1}\bar{2}\bar{3},
147,\bar{1}14,\bar{1}47,\bar{1}17, 258,\bar{2}58,\bar{2}28,\bar{2}25,369,\bar{3}69,\bar{3}39,\bar{3}36\}$$ depicted in Figure~\ref{threelines}. It is shown in \cite[Theorem~4.1]{pfister2019primary} that $I_{\Delta}$ has two prime components such that:
\begin{itemize}
\item The first component is generated by all $3$-minors of a generic $3\times 12$ matrix. Hence, it corresponds to the matroid $M_0$ with dependent sets, $$\mathcal{C}(M_0)=\{\text{all $3$-subsets of $E$}\}.$$

\item The second component is generated by $44$ polynomials of which $16$ are the original generators of $I_{\Delta}$
and the remaining $28$ generators are all homogeneous of degree $12$.
\end{itemize}

Note that there exists a unique minimal matroid $M$ for $\Delta$. The circuits of $M$ are elements of $\Delta$ and all subsets of size $4$ of $E$, which do not contain an element of $\Delta$. (For the proof of a more general statement, see Theorem~\ref{thm:s-t-3}.) Therefore, by Proposition~\ref{prop:comb_decomposition}, we have $\sqrt{I_{\Delta}} = I_{M}^{\comb}$, so the result of \cite{pfister2019primary} describes the decomposition of the combinatorial closure of $M$. This provides a geometric meaning of the second component of the decomposition. 
In particular, it can be obtained by saturating $\sqrt{I_{\Delta}}$ as in Proposition~\ref{prop:saturation}; see Figure~\ref{threelines}.

\medskip

Therefore, we can give a geometric interpretation of the second component described above. More precisely, the $28$ non-determinantal generators 
are analogous to the geometric constraints of \textit{quadrilateral sets}; see \cite[\S8]{richter2011perspectives}. More precisely, given a generic collection of $12$ points arranged in a $3 \times 4$ grid in the projective plane, if we project this configuration onto a line then there are constraints on the distances between the projected points. The $28$ polynomials describe precisely these constraints. 
\end{example}

\section{Irreducible point and line configurations}\label{sec:irred_configuration}

In this section, we will introduce point and line configurations. These are a family of matroids whose ideals appear as prime components of many examples of hypergraph ideals. We will prove Theorem~\ref{thm:lineArrangementBuildUpGeneral} which allows us to inductively build up configurations with irreducible varieties. In particular, using this theorem, we will show that all configurations with at most $6$ lines have irreducible varieties; see Theorem~\ref{thm:k=6_irreducible}.

\begin{notation}
We write $\MC = (\MP, \ML, \mI)$ for a configuration of points and lines in the plane, or simply a configuration. The set $\MP = \{p_1, \dots, p_n \}$ is the collection of \emph{points}, $\ML = \{\ell_1, \dots, \ell_k\}$ is the collection of \emph{lines}  and $\mI \subseteq \MP \times \ML$ is the set of \emph{incidences} of points and lines. If the point $p$ lies on the line $\ell$ then $(p, \ell) \in \mI$ and we say that $p$ is incident to line $\ell$, or $\ell$ passes through $p$. We assume that any two distinct lines meet in at most one point.

For each point $p \in \mP$, we write $\ML_p \subseteq \ML$ for the set of lines which pass through the point $p$. With a slight abuse of notation, we will often identify the line $\ell\in\mL$ with the set of points which lie on the line $\ell$.

\begin{definition}
Let $\MC = (\mP, \mL, \mI)$ be a configuration. For a line $\ell \in \mL$ we define $\MC \backslash \ell = (\mP', \mL \backslash \ell, \mI \cap (\mP' \times (\mL \backslash \ell))$ to be the configuration where $\mP'$ is the collection of points of $\MC$ which do not lie solely on $\ell$.
\end{definition}

\end{notation}
For each configuration $\MC = (\mP,\mL,\mI)$, there is a simple matroid of rank at most $3$ associated to it. The matroid has ground set $\mP$, and is defined such that a $3$-subset $D \subseteq \mP$ is dependent if and only if there exists a line $\ell \in \mL$ with $D \subseteq \ell$. The realization spaces for point and line configurations, are the realization spaces for the matroid; see Definition~\ref{def:prelim_realisation}.
In the projective plane, any two distinct points determine a line and so any pair of points in $\mP$ can be taken to lie on a line. However, if there is not a third element of $\mP$ on the line, then this line does not give rise to any dependent set in the matroid. Therefore, in the remainder of this section, we require that each line in $\mL$ contain three or more points in $\mP$.

We note that configurations $\MC$ are purely combinatorial objects. In particular, they need not be realizable, i.e.~we may have $\Gamma_\MC = \emptyset$. For example, the Fano plane, see Example~\ref{example: fano plane}, is the smallest non-realizable configuration over $\CC$. In the following sections, the purpose of using configurations is to give a parametrization of the irreducible components of certain hypergraph ideals via their realization spaces $\Gamma_\MC$. For these cases, we may assume that the configurations are realizable. However, for families of configurations that we study directly, such as forest-like configurations, we will prove that they are indeed realizable.

\begin{theorem}\label{thm:k=6_irreducible}
For any configuration $\MC$ with at most $6$ lines, $\Gamma_\MC$ is irreducible with respect to the~Zariski~topology.
\end{theorem}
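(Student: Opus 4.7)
The plan is to argue by induction on $k$, the number of lines in $\MC$, with Theorem~\ref{thm:lineArrangementBuildUpGeneral} providing the inductive step. For the base case $k = 0$, no linear dependencies among triples of points are prescribed, so $\Gamma_\MC$ is the nonempty Zariski-open subset of $(\CC^3)^n$ on which every three of the $n$ points are linearly independent; hence it is irreducible.

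For the inductive step, I would pick any line $\ell \in \mL$ and form $\MC' := \MC \setminus \ell$. By the definition of $\MC \setminus \ell$, only points lying solely on $\ell$ are discarded, so each $\ell' \in \mL \setminus \{\ell\}$ retains all of its points and in particular still contains at least three; thus $\MC'$ is a legitimate configuration with $k - 1$ lines, and by the inductive hypothesis $\Gamma_{\MC'}$ is irreducible. To apply Theorem~\ref{thm:lineArrangementBuildUpGeneral} and conclude that $\Gamma_\MC$ is irreducible, it remains to check that $\ell$ passes through at most two intersection points of $\MC'$.

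This is precisely where the hypothesis $k \le 6$ enters. An intersection point of $\MC'$ lying on $\ell$ is a point $p \in \ell$ through which at least two lines of $\mL \setminus \{\ell\}$ pass; let $m$ denote the number of such points. Double-counting incidences $(p, \ell')$ with $p \in \ell \cap \ell'$ and $\ell' \in \mL \setminus \{\ell\}$ yields at least $2m$ on one side, while on the other each of the $k - 1$ lines in $\mL \setminus \{\ell\}$ contributes at most one pair, since distinct lines meet in at most one point. Hence $2m \le k - 1 \le 5$, forcing $m \le 2$, exactly as required.

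The argument is otherwise routine; the only delicate points are verifying that $\MC \setminus \ell$ remains a valid configuration (every remaining line still has three or more points) and observing that the pigeonhole estimate is tight, since at $k = 7$ one only gets $m \le 3$. This is why this inductive approach cannot reach seven or more lines without a substantially different idea, and it explains why the bound $6$ in the statement is the natural stopping point for this method.
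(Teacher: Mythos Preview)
Your proof is correct and follows essentially the same approach as the paper's: induct on the number of lines and invoke Theorem~\ref{thm:lineArrangementBuildUpGeneral}, with the key counting observation that a point of $S$ absorbs at least two of the remaining $k-1$ lines. The only difference is cosmetic: the paper cites an external result for the base case $k\le 4$ and phrases the count as a contradiction ($|S|\ge 3 \Rightarrow k\ge 7$), whereas you start the induction at $k=0$ and write the inequality $2m\le k-1$ directly, making your argument slightly more self-contained.
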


We prove this theorem using the Grassmann-Cayley algebra. 
We review some of its theory following \cite{sturmfels2008algorithms}.

\begin{definition}
Let $V$ be a vector space of dimension $d$ over $\mathbb{C}$ and let $\bigwedge(V)$ be the exterior algebra of $V$. The Grassmann-Cayley algebra is the vector space $\bigwedge(V)$ together with the two operations $\wedge$ and $\vee$.
We denote by $a_1 \vee a_2 \vee \dots \vee a_k \in \bigwedge(V)$ the extensor of length $k$ which is also referred to as the join of $a_1,a_2\dots,a_k$. The meet operation $\wedge$ is a binary operation on 
two extensors $A$ and $B$ of length $j, k$ with $j + k \geq d$ as: 
\[
A \wedge B := \sum_{\sigma} {\rm sign}(\sigma) [a_{\sigma(1)},\dots,a_{\sigma(d-k)},b_1,\dots,b_k]a_{\sigma(d-k+1)}\vee\cdots\vee a_{\sigma(j)},
\]
where the sum is taken over all permutations $\sigma$ of $\{1,2,\dots,j\}$. 
\end{definition}

Any extensor $A = a_1\vee\dots\vee a_k$ has an associated vector space $\overline{A} = {\rm span}(a_1,\dots,a_k)$~with~following~properties.

\begin{lemma}
\label{lemma:GrassmannCayleyAlgebra}
Let $A$ and $B$ be two extensors. Then the following hold: 
\begin{itemize}
\item Any extensor $A$ is uniquely determined from $\overline{A}$ up to a scalar multiple {\rm\cite[Section 3.3]{sturmfels2008algorithms}.}
\item The meet of two extensors is again an extensor {\rm\cite[Theorem~3.3.2~(b)]{sturmfels2008algorithms}.}
\item We have that $A \wedge B \neq 0$ if and only if $\overline{A} + \overline{B} = V$. In this case $\overline{A \wedge B} = \overline{A} \cap \overline{B}$
{\rm\cite[Theorem~3.3.2~(c)]{sturmfels2008algorithms}.}
\end{itemize}
\end{lemma}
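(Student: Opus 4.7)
The plan is to establish the three bullets in turn via the identification of length-$k$ extensors with nonzero decomposable elements of $\bigwedge^k(V)$, using bases adapted to $\overline{A} \cap \overline{B}$ when analyzing the meet.

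For the first bullet, if $A = a_1 \vee \cdots \vee a_k$ and $A' = a_1' \vee \cdots \vee a_k'$ both have associated subspace $W$, then $\{a_i\}$ and $\{a_i'\}$ are bases of $W$, so $a_i' = \sum_j c_{ij} a_j$ for some invertible matrix $C = (c_{ij})$. Multilinearity and antisymmetry of $\vee$ then give $A' = \det(C)\, A$, proving uniqueness up to a scalar. This also permits me to compute $A \wedge B$ using any convenient representatives of $A$ and $B$, at the cost of a nonzero overall scalar.

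For the second and third bullets, set $j = \dim \overline{A}$, $k = \dim \overline{B}$, and $m = j + k - d$. In the degenerate case $\overline{A} + \overline{B} \neq V$, every $d$-subset of $\{a_1,\dots,a_j,b_1,\dots,b_k\}$ is linearly dependent, so every bracket in the defining sum vanishes and $A \wedge B = 0$. Otherwise $\overline{A} + \overline{B} = V$ and $\dim(\overline{A} \cap \overline{B}) = m$; I pick a basis $c_1,\dots,c_m$ of $\overline{A} \cap \overline{B}$ and extend it to bases $c_1,\dots,c_m,e_1,\dots,e_{j-m}$ of $\overline{A}$ and $c_1,\dots,c_m,f_1,\dots,f_{k-m}$ of $\overline{B}$. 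With $A$ and $B$ replaced by the corresponding adapted wedges, a bracket $[a_{\sigma(1)},\dots,a_{\sigma(d-k)},b_1,\dots,b_k]$ in the meet formula is nonzero only when the positions $\sigma(1),\dots,\sigma(d-k)$ select precisely the $e_i$'s (note that $d-k = j-m$), because any $c_i$ placed in the bracket would duplicate one of the $b$'s and force the bracket to vanish.

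Collecting these surviving permutations and factoring out the internal orderings of the $e$-block (in the bracket) and of the complementary $c$-block (in the trailing wedge), antisymmetry in each factor cancels the corresponding sign contributions, so every surviving term yields the same value. The sum therefore equals a nonzero scalar multiple of $[e_1,\dots,e_{j-m},c_1,\dots,c_m,f_1,\dots,f_{k-m}] \cdot (c_1 \vee \cdots \vee c_m)$, and the bracket is nonzero because $\{c_i\} \cup \{e_i\} \cup \{f_i\}$ is a basis of $V = \overline{A} + \overline{B}$. This simultaneously proves that $A \wedge B$ is an extensor, that it is nonzero exactly when $\overline{A} + \overline{B} = V$, and that $\overline{A \wedge B} = \overline{A} \cap \overline{B}$. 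The main obstacle is the sign bookkeeping in this last step: I would handle it by writing each surviving $\sigma$ as a fixed reference shuffle composed with an independent permutation of the $e$-block and of the $c$-block, and verifying term by term that the signs appearing in $\operatorname{sign}(\sigma)$, in the bracket, and in the trailing wedge match up so as to cancel rather than destructively interfere.
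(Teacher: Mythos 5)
Your argument is correct, but note that the paper does not prove this lemma at all: it is quoted with citations to Sturmfels' book (Section 3.3 and Theorem 3.3.2 there), so what you have written is a self-contained verification of the cited facts rather than an alternative to an in-paper proof. Your computation is essentially the standard one from the reference, and it does go through: after passing to adapted bases, every bracket containing some $c_i$ vanishes because $c_i$ is repeated among the $b$'s, so a surviving $\sigma$ must send $\{1,\dots,d-k\}$ onto the $e$-positions (recall $d-k=j-m$); such a $\sigma$ factors as a permutation $\tau$ of the $e$-block times a permutation $\rho$ of the $c$-block, and $\mathrm{sign}(\sigma)=\mathrm{sign}(\tau)\,\mathrm{sign}(\rho)$ cancels exactly against the $\mathrm{sign}(\tau)$ from reordering the bracket and the $\mathrm{sign}(\rho)$ from reordering the trailing wedge, so every surviving term equals $[e_1,\dots,e_{j-m},c_1,\dots,c_m,f_1,\dots,f_{k-m}]\,(c_1\vee\cdots\vee c_m)$ and the sum is $(d-k)!\,m!$ times this, nonzero over $\CC$. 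Two points you should make explicit rather than leave implicit: (i) to compute with ``convenient representatives at the cost of a nonzero scalar'' you need that the defining sum for $A\wedge B$ is multilinear and alternating separately in $(a_1,\dots,a_j)$ and in $(b_1,\dots,b_k)$, hence factors through the extensors themselves; the first bullet alone (the subspace determines the extensor up to scalar) does not give this, though the same $\det(C)$ substitution you use for the first bullet does; (ii) since the paper's formula sums over all permutations rather than over shuffles, the multiplicities $(d-k)!\,m!$ appear, which is harmless in characteristic $0$ but would matter over a field of positive characteristic. With those caveats, your proof establishes all three bullets, including the degenerate direction ($\overline{A}+\overline{B}\neq V$ forces every bracket of $d$ vectors from a proper subspace to vanish).
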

\medskip

In order to prove Theorem~\ref{thm:k=6_irreducible}, we will take cases on the possible configurations. We begin by proving the following theorem which allows us to build new irreducible configurations from old.  The cases in the following theorem are depicted in Figure~\ref{fig:example_BuildUpGeneral} and explained further in Example~\ref{example:buildUpGeneral}.

\begin{theorem}\label{thm:lineArrangementBuildUpGeneral}
Let $\MC= (\MP, \ML, \mI)$ be a configuration and $\ell$ a line in $\ML$. Let $S = \{ p \in \ell : |\mL_p| \ge 3 \}$ be set of the points on $\ell$ which lie on at least $2$ other lines. Suppose that $|S| \le 2$. If $\Gamma_{\MC \backslash \ell}$ is irreducible then so is $\Gamma_\MC$.
\end{theorem}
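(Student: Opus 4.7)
The strategy is to exhibit $V_\MC$ (equivalently $\Gamma_\MC$) as the dense image of a morphism from an iterated projective bundle built over an intermediate irreducible variety. I first classify the points of $\MC$ lying on $\ell$: let $S \subseteq \ell$ denote the points on at least two other lines (so $|S| \le 2$ by hypothesis), $T \subseteq \ell$ the points on exactly one other line, and $B \subseteq \ell$ the points lying only on $\ell$. I then introduce an auxiliary matroid $\MC_0$ on the ground set $\mP \setminus (T \cup B)$ given by restricting $\MC$ to this set; since $|S| \le 2$, the line $\ell$ contributes no dependency in $\MC_0$, and so $\MC_0$ agrees with the restriction of $\MC \setminus \ell$ to $\mP \setminus (T \cup B)$.

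The first step is to deduce that $V_{\MC_0}$ is irreducible from the hypothesis. Consider the forgetful morphism $\tau : V_{\MC \setminus \ell} \to V_{\MC_0}$ which drops the coordinates for points in $T$. Given a generic $A_0 \in \Gamma_{\MC_0}$, the fiber $\tau^{-1}(A_0)$ is reconstructed by placing, for each $r \in T$, the point $r$ on the line $m_r$ (the unique line of $\MC \setminus \ell$ through $r$); this line is determined by the at least two other points of $m_r$ already present in $A_0$. The generic fiber is therefore an open subset of $(\mathbb{P}^1)^{|T|}$, irreducible of constant dimension $|T|$. Since $V_{\MC \setminus \ell}$ is irreducible by hypothesis and $\tau$ is dominant, $V_{\MC_0}$ is irreducible as the closure of a continuous image of an irreducible variety.

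Next, I introduce the incidence variety
\[
W = \{(A_0, \ell_0, q_1, \ldots, q_{|B|}) : A_0 \in V_{\MC_0},\ \ell_0 \text{ a line through the points of } S \text{ in } A_0,\ q_j \in \ell_0\}.
\]
The projection $W \to V_{\MC_0}$ is an iterated projective bundle: the space of lines through the $|S| \le 2$ points of $S$ in $A_0$ is the irreducible variety $\mathbb{P}^{2-|S|}$, and for each fixed $\ell_0$ the $|B|$ points $q_j$ contribute an irreducible factor $(\mathbb{P}^1)^{|B|}$. Hence $W$ is irreducible. There is a natural rational map $\Phi : W \to V_\MC$ that records the realization $A_0$, places each $r \in T$ at the intersection $\ell_0 \cap m_r$, and places the remaining points at $q_j \in \ell_0$. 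Every generic $A \in \Gamma_\MC$ arises as $\Phi(A_0, \ell_0, (q_j))$ by taking $A_0$ to be the restriction of $A$ to $\mP \setminus (T \cup B)$, $\ell_0$ the realization of $\ell$ in $A$, and the $q_j$ the type-$B$ points of $A$. Thus $\Phi$ is dominant, and so $V_\MC = \overline{\Phi(W)}$ is irreducible.

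The main obstacle is verifying dominance of $\tau$ and $\Phi$, which amounts to checking that the generic extensions described above do not introduce any unwanted incidences and hence land in the realization spaces of the larger configurations. The hypothesis $|S| \le 2$ enters essentially in the incidence variety construction: it guarantees that the set of lines through the points of $S$ in $A_0$ is a nonempty irreducible family of constant dimension $2-|S|$ for every $A_0 \in V_{\MC_0}$, whereas for $|S| \ge 3$ this set would generically be empty and $\Phi$ would factor through a proper subvariety, destroying the argument.
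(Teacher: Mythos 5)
Your proof is correct in outline but takes a genuinely different route from the paper's. The paper parametrizes directly over $\Gamma_{\MC\setminus\ell}$: it forms $X = \Gamma_{\MC\setminus\ell} \times \Hom_\CC(\CC^2,\CC^3) \times (\CC^2)^r \times \CC^m$ (pinning the first one or two columns of $\phi$ to the points of $S$ in the cases $|S|=1,2$), cuts out an open subset $X'$ of genericity conditions, and exhibits an explicit polynomial map $\psi\colon X' \to \CC^{d\times n}$ whose image contains $\Gamma_\MC$ as an open subset; the crucial point is that the intersections $q_i = \ell\cap\ell_i$ are written as concrete Grassmann--Cayley formulas so that $\psi$ is genuinely a morphism. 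You instead insert an intermediate configuration $\MC_0$ on the ground set $\mP\setminus(T\cup B)$, first deduce irreducibility of $V_{\MC_0}$ from the hypothesis via the forgetful projection $\tau$, and then fiber an incidence variety $W$ over $V_{\MC_0}$ packaging the line $\ell_0$ together with the free points. Both arguments ultimately exhibit $\Gamma_\MC$ as a dense open subset of the image of an irreducible variety, and in both the hypothesis $|S|\le 2$ enters only to make the family of candidate lines $\ell_0$ a nonempty irreducible $\PP^{2-|S|}$. The detour through $\MC_0$ is not in the paper, but it is coherent and in fact avoids some bookkeeping awkwardness there, since $\gamma\in\Gamma_{\MC\setminus\ell}$ already carries (free) coordinates for the $T$-points that $\psi$ must then overwrite with $\lambda_i q_i$.

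Two steps in your sketch need to be carried out carefully rather than asserted. First, dominance of $\tau$: you must check that every line $m_r$ with $r\in T$ retains at least two of its points in $\MC_0$ (this holds because $m_r$ meets $\ell$ only in $r$, so only $r$ is deleted from $m_r$), so that $m_r$ is still determined as a line in a realization of $\MC_0$, and then a generic choice of $r$ on $m_r$ lifts $A_0$ to $\Gamma_{\MC\setminus\ell}$. Second, $\Phi$ is a priori only a rational map, and its image is a priori only contained in the combinatorial closure $V^{\comb}_\MC$: one must restrict to an open subset of $W$ where $\ell_0$ is distinct from each $m_r$, the two points of $S$ are distinct (for $|S|=2$), and the forced points $\ell_0\cap m_r$ together with the $q_j$ avoid all extraneous coincidences with one another and with the rest of the realization. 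You flag these genericity checks at the end, and they are precisely where the real work lies; the paper handles them by writing everything as an explicit polynomial map on an open subvariety $X'$ and then invoking Proposition~\ref{prop:saturation} to identify $\Gamma_\MC$ as an open subset of the image.
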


\begin{proof}

Let $n$ be the number of points in $\MC \backslash \ell$ and $\ell_1, \dots, \ell_m$ be the lines of $\MC \backslash \ell$ which intersect $\ell$ in $\mL$ at a unique point. That is, if $\ell_i$ intersects $\ell$ at the point $p$, then there are no other lines through $p$, i.e.~$\mL_p = \{\ell, \ell_i \}$.
For each line $\ell_i$ we fix two distinct points $p_{i, 1}$ and $p_{i, 2}$ in $\mP \backslash \ell$ which lie on $\ell_i$.
For each realization $\gamma \in \Gamma_{\MC\backslash \ell}$ we write $\gamma_1, \dots, \gamma_n \in \CC^3$ for the points of $\gamma$ and $\gamma_{\ell_1}, \dots, \gamma_{\ell_m} \subseteq \CC^3$ for the $2$-dimensional linear subspaces corresponding to the lines. Note that the subspace $\gamma_{\ell_i}$ is the linear span of the points $\gamma_{p_{i,1}}$ and $\gamma_{p_{i,2}}$. We denote by $r$ the number of points of $\MC$ which lie on $\ell$ and no other lines.
We proceed by taking cases on $|S| \in \{ 0, 1, 2\}$. 

\medskip

\noindent \textbf{Case 1.} Assume that $|S| = 0$. We define the space
\[
X = \Gamma_{\MC \backslash \ell} \times  \Hom_{\CC}(\CC^2, \CC^3) \times (\CC^2)^r \times \CC^m,\]
where $\Hom_{\CC}(\CC^2, \CC^3)$ denotes the set of linear maps from $\CC^2$ to $\CC^3$. Here, we think of a linear map $\phi \in \Hom_{\CC}(\CC^2, \CC^3)$ as a $3 \times 2$
matrix with entries in $\CC$. We define the subset $X' \subseteq X$ to be the collection of $(\gamma, \phi, (y_1, \dots, y_r), (\lambda_1, \dots, \lambda_m)) \in X$ such that the following conditions hold:
\begin{itemize}
    \item[(a)] $\rank(\phi) = 2$,
    \item[(b)] for each $i \in [n]$, the point $\gamma_i$ does not lie in $\phi(\CC^2) \subseteq \CC^3$,
    \item[(c)] for each $i \in [r]$, the point $\phi(y_i)$ does not lie in any of the subspaces $\gamma_{\ell_1}, \dots, \gamma_{\ell_m}$,
    \item[(d)] for any  $i, j \in [r]$, if $\phi(y_i) = \phi(y_j)$ then $i = j$. 
\end{itemize}

Since $\Gamma_{\MC\backslash \ell}$, $(\CC^2)^r$ and $\Hom_{\CC}(\CC^2, \CC^3)$ are irreducible, we have that $X$ is a product of irreducible varieties over an algebraically closed field. Hence, $X$ is also irreducible. By construction, each of the conditions for the subset $X' \subseteq X$ above is a rank constraint on certain submatrices. Explicitly we have that:
\begin{itemize}
    \item $\rank(\phi) \neq 2$ if all of the $2$-minors of $\phi$ are zero,
    \item Assuming that $\rank(\phi) = 2$, a point $\gamma_i$ lies inside $\phi(\CC^2)$ if the $3$-minor $[\gamma_i, \phi(1,0), \phi(0,1)]$ is zero,
    \item A point $\phi(y_i)$ lies inside the subspace $\gamma_{\ell_i}$ if the $3$-minor $[\phi(y_i), \gamma_{p_{i,1}}, \gamma_{p_{i,2}}]$ is zero,
    \item If $\rank(\phi) = 2$ then it follows that $\phi$ is injective.
\end{itemize}
Therefore $X' \subseteq X$ is an open subset which implies that $X'$ is also irreducible. 

Now, for each $(\gamma, \phi, (y_1, \dots, y_r), (\lambda_1, \dots, \lambda_m)) \in X'$ and for each $i \in [m]$ we fix a non-zero point $q_i \in \CC^3$ which lies in the intersection of $\gamma_{\ell_i}$ and $\phi(\CC^2)$. Note that the ambient space is $\CC^3$, hence any pair of $2$-dimensional linear subspaces intersect. By construction of $X'$, we have that $\phi(\CC^2)$ and $\gamma_{\ell_i}$ do not coincide. Hence, $\dim(\phi(\CC^2) \cap \gamma_{\ell_i}) = 1$ and $q_i$ is unique up to a non-zero scalar multiple. We give an explicit formula for $q_i$ using the Grassmann-Cayley algebra as follows.
Recall that $\gamma_{\ell_i}$ is the span of $a_1 := \gamma_{p_{i,1}}$, $a_2 := \gamma_{p_{i,2}}$ and $\phi(\CC^2)$ is the span of $b_1 := \phi(1,0)$, $b_2 := \phi(0,1)$. Since the spaces $\gamma_{\ell_i}$ and $\phi(\CC^2)$ do not coincide, they must span the entire space $\CC^3$. Since $\dim(\phi(\CC^2) \cap \gamma_{\ell_i}) = 1$, by Lemma~\ref{lemma:GrassmannCayleyAlgebra}, we have that $q_i := [a_1, b_1, b_2]a_2 - [a_2, b_1, b_2]a_1 \in \gamma_{\ell_i} \cap \phi(\CC^2)$ is a non-zero vector lying in the intersection.
We define the map
\[
\psi : X' \rightarrow (\CC^3)^{n + m + r} :
(\gamma, \phi, (y_1, \dots, y_r), (\lambda_1, \dots, \lambda_m)) \mapsto 
(\gamma, \lambda_1 q_1, \dots, \lambda_m q_m, \phi(y_1), \dots, \phi(y_r)).
\]
By construction, $\phi$ is a linear map and the coordinates of the points $q_i \in \CC^3$ are polynomials in the entries of $\gamma$. Thus $\psi$ is a polynomial map and so it is continuous.

It remains to show that $\Gamma_\MC$ is an open subset of the image of $\psi$. It is easy to see that the image of $\psi$ contains $\Gamma_\MC$ since any realization of $\MC$ can be viewed as a realization of $\Gamma_{\MC \backslash \ell}$ together with some additional points on $\ell$. To show that $\Gamma_\MC$ is open in the image of $\psi$, we note that the image of $\psi$ is contained in the combinatorial closure $V^{\comb}_{\MC}$. So $\Gamma_\MC$ is obtained from the image of $\psi$ by removing the vanishing locus of the bases ideal of $J_\MC$; see Proposition~\ref{prop:saturation}.

\medskip

\noindent \textbf{Case 2.} Assume that $|S| = 1$ and write $S = \{s\}$ where $s$ is the corresponding point of intersection in $\MC$ and $\MC \backslash \ell$. For any $\gamma \in \Gamma_\MC$ we denote by $\gamma_s \in \CC^3$ the coordinates of the point corresponding to $s$. We define the space
\[
X = \{ (\gamma, \phi, (y_1, \dots, y_r), (\lambda_1, \dots, \lambda_m)) : \phi(1,0) = \gamma_s \} \subseteq \Gamma_{\MC \backslash \ell} \times \Hom_\CC(\CC^2, \CC^3) \times (\CC^2)^r \times \CC^m 
\]
where the condition $\phi(1,0) = \gamma_s$ on a matrix $\phi \in \Hom_\CC(\CC^2, \CC^3)$ is equivalent to the condition that the first column of $\phi$ is equal to $\gamma_s$. So, we have that $X \cong \Gamma_\MC \times \CC^3 \times (\CC^2)^r \times \CC^m$ which is irreducible. We define the subset $X' \subseteq X$ identically to Case 1, except that we allow $\gamma_{s} \in \phi(\CC^2)$ in condition (b). Note that $\gamma_s \neq 0$ and so by the same argument, $X' \subseteq X$ is an open subset.
The remainder of the argument from Case 1, including the construction of the points $q_1, \dots, q_m \in \CC^3$ and the map $\psi$, follows identically.

\medskip

\noindent \textbf{Case 3.} Assume that $|S| = 2$ and write $S = \{s_1, s_2\}$ for the corresponding points of intersection lying in both $\MC$ and $\MC \backslash \ell$. For any $\gamma \in \Gamma_\MC$ we denote by $\gamma_{s_1}, \gamma_{s_2} \in \CC^3$ the coordinates of the points corresponding to $s_1$ and $s_2$, respectively. We define the space
\[
X = \{ (\gamma, \phi, (y_1, \dots, y_r), (\lambda_1, \dots, \lambda_m)) : \phi(1,0) = \gamma_{s_1}, \ \phi(0,1) = \gamma_{s_2} \} \subseteq \Gamma_{\MC \backslash \ell} \times \Hom_\CC(\CC^2, \CC^3) \times (\CC^2)^r \times \CC^m 
\]
where the conditions $\phi(1,0) = \gamma_{s_1}$ and $\phi(0,1) = \gamma_{s_2}$ on a matrix $\phi \in \Hom_\CC(\CC^2, \CC^3)$ are equivalent to the conditions that the first column of $\phi$ is equal to $\gamma_{s_1}$ and the second column of $\phi$ is equal to $\gamma_{s_2}$, respectively. Thus $X \cong \Gamma_\MC \times (\CC^2)^r \times \CC^m$, which is irreducible. We define the subset $X' \subseteq X$ identically to Case 1, except that we allow $\gamma_{s_1}, \gamma_{s_2} \in \phi(\CC^2)$ in condition (b). Note that $s_1$ and $s_2$ are distinct points of $\MC$ so $\rank(\phi) = 2$. By the same argument as in Case 1, $X' \subseteq X$ is an open subset.
The remainder of the argument from Case 1, including the construction of the points $q_1, \dots, q_m \in \CC^3$ and the map $\psi$, follows identically. 
\end{proof}

\begin{example}\label{example:buildUpGeneral}
Let $\MC$ be the configuration in Case 1 of Figure~\ref{fig:example_BuildUpGeneral} and $\ell$ the line indicated in the diagram. It is shown in \cite{ollie_fatemeh_harshit} that all configurations with at most $4$ lines have irreducible realization spaces. In particular, $\Gamma_{\MC \backslash \ell}$ is irreducible. For each point $p$ on $\ell$ we have that $|\mL_p| = 2$, thus  $\Gamma_\MC$ is irreducible by Theorem~\ref{thm:lineArrangementBuildUpGeneral}. Note that, in the proof, the points $p_{1,1}, p_{1,2}$, as shown in the diagram, are used to give \textit{coordinates} on the line $\ell_1$, which allows us to give an explicit formula for the intersection point $q_1$ of $\ell$ and $\ell_1$.

In the figure, the configurations labelled Case 2 and Case 3 show examples of configurations, and the line $\ell$, corresponding to their respective cases in the proof of Theorem~\ref{thm:lineArrangementBuildUpGeneral}. The points which lie in the set $S$ are labelled by $s, s_1, s_2$. In particular, each of the realization spaces for these configurations is irreducible.

\begin{figure}
    \centering
    \includegraphics[scale=1.0]{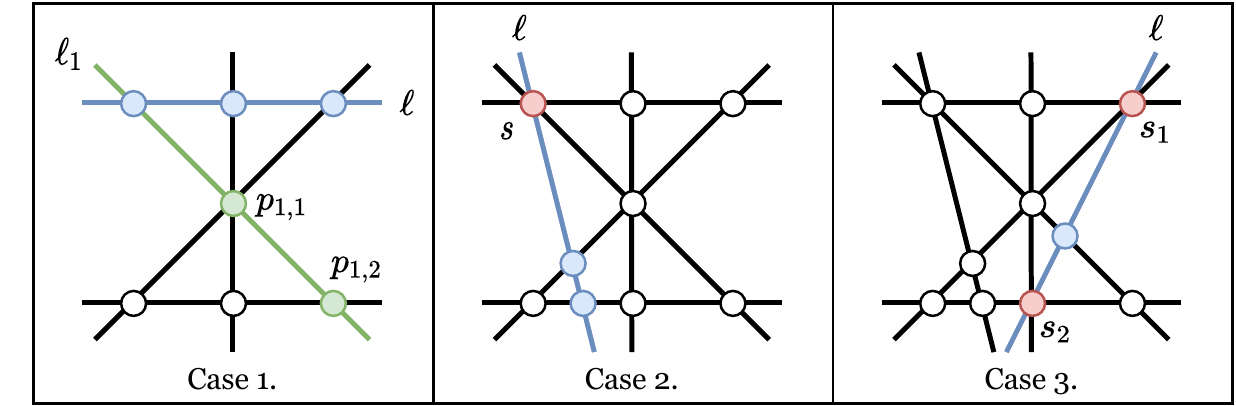}
    \caption{Configurations in Example~\ref{example:buildUpGeneral}. The left and middle figures are obtained from the figure immediately to their right by removing the line $\ell$.}
    \label{fig:example_BuildUpGeneral}
\end{figure}
\end{example}

The proof of Theorem~\ref{thm:k=6_irreducible} follows easily from the above theorem.

\begin{proof}[{\bf Proof of Theorem~\ref{thm:k=6_irreducible}.}]
We show that $\Gamma_\MC$ is irreducible by taking cases on the number of lines in $\MC$. If $\MC$ contains at most $4$ lines then it is irreducible by \cite[Corollary~4]{ollie_fatemeh_harshit}. On the other hand, if $\MC$ contains $5$ or $6$ lines, then we will show that Theorem~\ref{thm:lineArrangementBuildUpGeneral} applies, and so we reduce to a line configuration with fewer lines.

Let $\ell$ be any line of $\MC$ and let $S = \{p \in \ell : |\mL_p| \ge 3 \}$. We show that $|S| \le 2$ by contradiction. 
Assume that $|S| \ge 3$. So there are three distinct points $p_1, p_2, p_3 \in \ell$ such that $|\mL_{p_i}| \ge 3$ for each $i \in [3]$. Note that a pair of lines can intersect in at most one point. So if $\ell' \neq \ell$ is a line passing through $p_1$ then it does not pass through $p_2$ or $p_3$. Since $|\mL_{p_i}| \ge 3$ for each $i$, the total number of distinct lines passing through $p_1, p_2$ and $p_3$ is at least $7$, a contradiction.
By induction, $\Gamma_{\MC \backslash \ell}$ is irreducible. So by Theorem~\ref{thm:lineArrangementBuildUpGeneral} we have that $\Gamma_\MC$ is irreducible.
\end{proof}

\begin{remark}
    The proof of Theorem~\ref{thm:k=6_irreducible} shows that Theorem~\ref{thm:lineArrangementBuildUpGeneral} applies to all line configurations with at most $6$ lines. However, for configurations with $7$ lines, consider the Fano plane depicted in Figure~\ref{fig:fano_components}. This configuration contains $7$ points each of which belongs to $3$ lines. Therefore,  Theorem~\ref{thm:lineArrangementBuildUpGeneral} does not apply. 
\end{remark}

\section{Combinatorial closures of configurations}\label{sec:forest_like_configurations}

In this section, we will focus on point and line configurations whose underlying graph is a forest. 
 As in the previous section, if $\MC = (\mP,\mL,\mI)$ is a point and line configuration, then each line in $\mL$ contains at least three points in $\mP$, unless stated otherwise.

We will give a complete description of their combinatorial closures by describing their components. Our main tool for this section is the \textit{perturbation argument} which gives a way to determine when a particular point in the combinatorial closure belongs to a specific realization space. Let us begin by making precise our notation.

\medskip
Let $\MC = (\mP,\mL,\mI)$ be a point and line configuration with the ordered set of points $\mP= \{p_1,\ldots,p_n\}$. Let  $G_{\MC}$ be a graph with vertex set $\MP$ and edges $\{p_i, p_j\}$ such that
\begin{itemize}
    \item both points lie on the same line $\ell\in \mL$ (which is necessarily unique) and
    \item there is no point $p_k$ lying on $\ell$ with $i<k<j$ (or $j<k<i$). 
\end{itemize}

Intuitively, $G_{\MC}$ is obtained from $\MC$ by making points into vertices and lines into paths. Moreover, every edge of $G_{\MC}$ is in exactly one of these paths. Each pair of paths intersects in at most one vertex, and this occurs if and only if the corresponding lines intersect in a point of $\mP$.

\begin{definition}\label{def:forestlike}
A configuration $\MC$ is \emph{forest-like} if the corresponding graph $G_{\MC}$ is a forest, i.e.~it has no cycle.
\end{definition}

Note that the graph $G_{\MC}$ depends on the choice of ordering of the points in $\mP$. However, the condition for $G_{\MC}$ to be a forest is independent of this choice. Essentially, swapping the order of two points on a line corresponds to simply ``flipping" an edge of $G_{\MC}$. We now make this more precise.

\begin{lemma}
\label{lem:ind-of-order}
If $\MC$ is forest-like, then any ordering of its points $\MP$ makes $G_{\MC}$ a forest. 
\end{lemma}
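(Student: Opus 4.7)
The plan is to prove this by a counting argument using the standard characterization of a forest: a graph $H$ is a forest if and only if $|E(H)| = |V(H)| - c(H)$, where $c(H)$ is its number of connected components. I will show that each of the three quantities $|V(G_\MC)|$, $|E(G_\MC)|$ and $c(G_\MC)$ is independent of the chosen ordering of $\mP$. The result then follows: if the equation $|E|=|V|-c$ holds for one ordering (so that $G_\MC$ is a forest), it holds for all of them.

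The vertex count is trivially independent of the ordering, since $V(G_\MC)=\mP$. For the edge count, I will use the observation that the edges of $G_\MC$ coming from a fixed line $\ell \in \mL$ are exactly the consecutive pairs in the total order on $\ell$ induced by restricting the chosen order on $\mP$. Thus each line $\ell$ contributes precisely $|\ell|-1$ edges, regardless of the ordering. Since any two distinct lines meet in at most one point (part of the standing assumption on configurations), no edge of $G_\MC$ is contributed by two different lines. Hence $|E(G_\MC)|=\sum_{\ell \in \mL}(|\ell|-1)$, which depends only on $(\mP,\mL,\mI)$.

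For the number of connected components, I will verify that two points $p,q \in \mP$ lie in the same component of $G_\MC$ if and only if there is a sequence of lines $\ell_1,\dots,\ell_k \in \mL$ and points $p=r_0,r_1,\dots,r_k=q$ with $r_{i-1},r_i \in \ell_i$ for each $i$. The ``only if'' direction is clear. For the ``if'' direction it suffices to show that, for any line $\ell$ and any two points on $\ell$, they are connected in $G_\MC$; but the edges of $G_\MC$ coming from $\ell$ already form a path through all points of $\ell$. This combinatorial characterization of connectivity does not refer to the ordering, so $c(G_\MC)$ is an invariant of the configuration.

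The main obstacle, which is really quite mild, is the verification that each line $\ell$ contributes a path (not a more complicated subgraph) to $G_\MC$; this is immediate from the definition, which picks out consecutive pairs in the induced ordering on $\ell$. Once that is in hand, combining the three invariance statements with the forest characterization $|E|=|V|-c$ finishes the proof.
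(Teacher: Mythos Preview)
Your proof is correct and takes a genuinely different route from the paper's. The paper argues by adjacent transpositions: starting from an ordering that makes $G_\MC$ a forest, it shows that swapping two consecutive points $p_i, p_{i+1}$ amounts (when they lie on a common line $\ell$) to swapping the non-$\ell$ neighbourhoods of the two corresponding vertices $v, w$ across the edge $\{v,w\}$, and that this operation preserves acyclicity. Any ordering is reached by a sequence of such swaps, so every ordering yields a forest.

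Your approach bypasses the swap-by-swap analysis entirely. By showing that $|V(G_\MC)|$, $|E(G_\MC)|=\sum_{\ell\in\mL}(|\ell|-1)$, and $c(G_\MC)$ are all intrinsic invariants of the incidence structure $(\mP,\mL,\mI)$, you reduce the claim to the numerical characterization $|E|=|V|-c$ of forests. The key step---that distinct lines never contribute the same edge, so the sum $\sum_\ell(|\ell|-1)$ counts edges without overcounting---follows cleanly from the standing assumption that two lines meet in at most one point. Your argument is shorter and arguably more conceptual: it explains \emph{why} the property is order-independent (all the relevant graph invariants are), rather than checking it operationally. The paper's transposition argument, on the other hand, gives a more explicit picture of how the graph changes under reordering, which could be useful if one later wanted to track finer structure than just ``forest or not.''
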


\begin{proof}
Let $\mP= \{p_1,\ldots,p_n\}$ be an ordering of $\mP$ such that $G_{\MC}$ is a forest. Inductively, it suffices to show that the order $\{p_1,\ldots,p_{i-1},p_{i+1},p_i,p_{i+2},\ldots,p_n\}$ gives rise to a forest.

If $p_i$ and $p_{i+1}$ do not lie on the same line, then there is no change to $G_{\MC}$. So it suffices to consider the case where $p_i$ and $p_{i+1}$ lie on a line $\ell\in\mL$. Then $\{p_i,p_{i+1}\}$ is an edge in $G_{\MC}$. Let us call these vertices of the graph $v$ and $w$. Let $A(v)$ be the set of vertices of $G_{\MC}$, adjacent to $v$, whose corresponding points in $\MC$ lie on some line $\ell' \neq \ell$, and define $A(w)$ similarly. Then swapping the order of $i$ and $i+1$ corresponds to replacing the edges in $\{\{x,v\}:x\in A(v)\}\cup\{\{x,w\}:x\in A(w)\}$ with the edges in $\{\{x,v\}:x\in A(w)\}\cup\{\{x,w\}:x\in A(v)\}$. One can see that this resulting graph is still a forest if $G_{\MC}$ was a forest.
\end{proof}

Because of Lemma \ref{lem:ind-of-order},
Definition~\ref{def:forestlike} is well-defined for point and line configurations with unordered sets of vertices. So, we say that $\MC$ is forest-like if there exists an ordering of its vertices such that the resulting configuration is forest-like.

The next result will also be helpful.

\begin{lemma}
\label{lem:one-line}
If $\MC=(\mP,\mL,\mI)$ is a forest-like configuration with $\mL\neq\emptyset$, then there exists a line $\ell$ which intersects $\MC \setminus \ell$ in at most one point of $\mP$.
\end{lemma}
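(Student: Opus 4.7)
The plan is to encode the incidences of shared points in a bipartite auxiliary graph and reduce the lemma to a leaf-finding statement. Define the bipartite graph $K$ on vertex set $\mL \sqcup S$, where $S \subseteq \mP$ is the set of points lying on at least two lines of $\mL$, with an edge $\{\ell, p\}$ whenever $p \in \ell$. A line $\ell$ satisfying the conclusion of the lemma is precisely a line-vertex of $K$ of degree at most $1$, so it suffices to produce such a vertex.

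The technical heart will be showing that $K$ is a forest. Any cycle in $K$, being bipartite, reads $\ell_1 - p_1 - \ell_2 - p_2 - \cdots - \ell_k - p_k - \ell_1$ with $p_i \in \ell_i \cap \ell_{i+1}$ (indices modulo $k$), and with the $\ell_i$ and $p_i$ all distinct. The case $k = 2$ is ruled out immediately by the standing assumption that two distinct lines share at most one point. For $k \geq 3$, I would translate the cycle in $K$ into a structure in $G_\MC$: each line $\ell_{i+1}$ is a path in $G_\MC$ passing through the distinct points $p_i$ and $p_{i+1}$, so it contains a sub-path from $p_i$ to $p_{i+1}$. Since the edge-sets of distinct lines in $G_\MC$ are pairwise disjoint, concatenating these sub-paths produces a closed walk in $G_\MC$ whose edges are pairwise distinct. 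The subgraph spanned by this edge set then has every vertex of even degree, which forces the existence of a cycle in $G_\MC$ and contradicts the forest-like hypothesis.

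Once $K$ is known to be a forest, the extraction of the desired line is short. Every shared-point vertex has degree at least $2$ in $K$ by definition, so no leaf of $K$ is a shared point. Hence each connected component of $K$ is either a single isolated line-vertex, providing a line with no shared points at all, or is a tree with at least one edge and therefore with at least two leaves, all of which must be line-vertices; any such leaf gives a line incident to exactly one shared point of $\MC \setminus \ell$. Since $\mL \neq \emptyset$, at least one component exists, and in either case the lemma follows.

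I expect the main delicacy to be the translation step from a cycle in $K$ to one in $G_\MC$: the closed walk produced in $G_\MC$ may revisit vertices, so one has to invoke the standard fact that an edge-simple closed walk has even degrees everywhere and hence contains a cycle, rather than claim the walk is a cycle outright. Beyond that point, the argument is routine combinatorial bookkeeping that relies only on the forest-like hypothesis and the standing assumption that two distinct lines meet in at most one point.
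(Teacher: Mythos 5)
Your proof is correct. It establishes the same essential point as the paper --- a cyclic chain of distinct line--intersection-point incidences would give rise to a cycle in $G_\MC$, contradicting the forest hypothesis --- but organizes the argument more rigorously through the bipartite incidence graph $K$ on $\mL \sqcup S$. The paper instead first reduces (via the two-lines-meet-in-at-most-one-point convention) to finding a line meeting at most one \emph{other line}, assumes for contradiction that every line meets at least two, and extracts a cyclic chain of lines by a pigeonhole argument on a sequence $\ell_1,\ell_2,\ldots$ before translating into $G_\MC$. Your bipartite reformulation buys two things the paper's sequence argument leaves implicit: the degenerate case of two lines sharing two distinct points is dispatched explicitly as the length-$4$ cycle in $K$, and the forest-plus-leaf extraction sidesteps the need to arrange that the sequence $\ell_1,\ell_2,\ldots$ is non-backtracking before pigeonhole applies. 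The translation from a cycle in $K$ (or from the paper's cyclic chain of lines) to a cycle in $G_\MC$ is the same in both proofs, and you correctly flag the one subtlety there: the concatenation of the sub-paths is an edge-simple closed walk but not necessarily vertex-simple, so one must pass through the even-degree argument to extract a genuine cycle rather than asserting the walk is one.
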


\begin{proof}
Because two lines intersect in at most one point, it will suffice to show that there exists a line $\ell$ which intersects $\MC \setminus \ell$ (at a point of $\mP$) in at most one line. Suppose otherwise for a contradiction. Then every line intersects with at least two other lines. Let $\ell_1,\ell_2,\ldots$ be a sequence of lines in $\MC$ such that $\ell_i$ intersects both $\ell_{i-1}$ and $\ell_{i+1}$. Since there are only finitely many lines, eventually there will be a repeated element $\ell_k$ of the sequence.

Since these lines correspond to paths in $G_{\MC}$, there is a cycle in $G_{\MC}$ whose set of edges includes at least one edge from each path corresponding to the lines in the sequence $\ell_k,\ell_{k+1},\ldots,\ell_k$. This cycle contradicts the assumption that $G_{\MC}$ is a forest.
\end{proof}

It is perhaps worth noting that forest-likeness is a rather restrictive condition for a matroid of rank $3$. However, forest-like configurations are not regular (realizable over every field) matroids. For example, no line with four or more points is a regular matroid. Each forest-like configuration is realizable over all infinite fields. To show this, it will be useful to recall the matroid-theoretic notion of \emph{freely adding} an element to a flat of a matroid. Intuitively, this operation takes a new element of the ground set and adds it to a flat as ``freely" as possible, that is, we keep as many sets independent as possible; see \cite{Crapo65} and \cite[Section 7.2]{Oxley}. We will also use this notion in Section \ref{sec:forestlike_realizable}.

\begin{definition}
Let $F_0$ be a flat of a matroid $M$ with rank function $r_M$. We say that $M'$ is the single-element extension obtained by \emph{freely adding} $e$ to $F_0$ if the flats of $M'$ fall into the following disjoint classes:
\begin{itemize}
    \item flats $F$ of $M$ that do not contain $F_0$,
    \item sets $F\cup e$ where $F$ is a flat of $M$ that contains $F_0$, and
    \item sets $F\cup e$ where $F$ is a flat of $M$ that does not contain $F_0$, and there is no flat $F'$ of $M$ of rank $r_M(F)+1$ such that $F\subseteq F'$ and $F_0\subseteq F'$.
\end{itemize}
\end{definition}
The following results are fairly well-known. In particular, Lemma~\ref{lem:extension} follows from a result of Piff~and~Welsh \cite{piff1970vector}; see also \cite[Proposition~11.2.16]{Oxley}. We provide a more direct proof to keep the~paper~self-contained.

\begin{lemma}
\label{lem:extension}
Let $\mathbb{F}$ be an infinite field, and let $M'$ be a matroid obtained by freely adding an element to a flat $F$ of a matroid $M$. Then $M'$ is $\mathbb{F}$-realizable if and only if $M$ is $\mathbb{F}$-realizable.
\end{lemma}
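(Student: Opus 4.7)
The plan is to prove the two directions separately. The reverse implication is immediate: since $M$ is the deletion $M' \setminus e$, any realization of $M'$ restricts to a realization of $M$ over $\mathbb{F}$ by discarding the vector assigned to $e$. All of the work is in the forward direction, where one must exhibit a vector for $e$ from a realization of $M$.

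For the forward direction, I would start with a realization $\{v_1,\dots,v_n\}$ of $M$ over $\mathbb{F}$ and produce a vector $v_e$ such that $\{v_1,\dots,v_n,v_e\}$ realizes $M'$. Since $F_0\cup\{e\}$ is a flat of $M'$ by part~(b) of the definition, $v_e$ is forced to lie in the linear span $W := \mathrm{span}\{v_i : i \in F_0\}$. For each flat $F$ of $M$ with $F_0 \not\subseteq F$, set $U_F := W \cap \mathrm{span}\{v_j : j \in F\}$, and then choose
\[
v_e \in W \setminus \bigcup_{F_0 \not\subseteq F}\, U_F,
\]
where the union is taken over the (finitely many) flats $F$ of $M$ not containing $F_0$.

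The key step is that each $U_F$ is a \emph{proper} subspace of $W$. Indeed, if $U_F = W$, then $v_i \in \mathrm{span}\{v_j : j \in F\}$ for every $i \in F_0$; since $F$ is a flat of the realized matroid $M$, this forces $F_0 \subseteq F$, contrary to the choice of $F$. Because $\mathbb{F}$ is infinite, a finite union of proper subspaces of $W$ cannot exhaust $W$, so the required $v_e$ exists. (In the degenerate case where $F_0$ has rank $0$, one has $W = \{0\}$ and $v_e = 0$, so $e$ becomes a loop in the realization, which matches the expected behaviour of freely adding to the flat of loops.)

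It remains to verify that $\{v_1,\dots,v_n,v_e\}$ realizes $M'$ by comparing rank functions. The standard description of the free extension, read off directly from the three families of flats, gives $r_{M'}(S\cup\{e\}) = r_M(S)$ when $F_0 \subseteq \mathrm{cl}_M(S)$ and $r_M(S)+1$ otherwise. The realized rank satisfies the analogous dichotomy according as $v_e \in \mathrm{span}\{v_j : j \in S\}$ or not, and the two agree: if $F_0 \subseteq \mathrm{cl}_M(S)$, then $W \subseteq \mathrm{span}\{v_j : j \in S\}$ and $v_e$ lies in this span; conversely, if $F := \mathrm{cl}_M(S)$ fails to contain $F_0$, then $W \cap \mathrm{span}\{v_j : j \in S\} \subseteq U_F$, which the construction of $v_e$ rules out. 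The only nontrivial obstacle in the plan is the properness of each $U_F$ — this is precisely where the hypothesis that $F$ is a flat enters — while the remainder is general position over an infinite field combined with routine rank bookkeeping.
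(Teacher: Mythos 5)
Your proof is correct and takes essentially the same approach as the paper's: the reverse direction is handled by deletion, and the forward direction chooses a vector for $e$ inside the subspace spanned by $F$ (your $F_0$) that avoids the finitely many proper subspaces corresponding to flats of $M$ not containing $F$, using that $\mathbb{F}$ is infinite. You go a bit further than the paper by explicitly verifying the rank dichotomy for $M'$ against the realized ranks, but this is just filling in a step the paper leaves implicit rather than a different argument.
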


\begin{proof}
Realizability over a field is closed under deletion. Therefore, if $M'$ is $\mathbb{F}$-realizable, then so is $M$.

For the converse, let $M$ be $\mathbb{F}$-realizable, and let $r$ be the rank function of $M$. A flat $F$ of $M$ corresponds to a linear subspace of $\mathbb{F}^{r(M)}$ of dimension $r(F)$. To construct a representation of $M'$ we must find a point $p$ in the subspace corresponding to $F$ that is not contained in any subspace corresponding to a flat $F'$ of $M$ that does not contain $F$. For any such flat $F'$, the rank of $F\cap F'$ is strictly less than $r(F)$. Therefore, since $\mathbb{F}$ is an infinite field, there are infinitely many such points $p$.
\end{proof}

\begin{lemma}
\label{lem:coloop}
Let $M'$ be obtained by adding a coloop to an $\mathbb{F}$-realizable matroid $M$. Then $M'$ is $\mathbb{F}$-realizable.
\end{lemma}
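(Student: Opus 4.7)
The plan is to take an $\mathbb{F}$-realization of $M$ and extend it by one ambient dimension, using the extra coordinate to place the new coloop transversally to the original span. Explicitly, let $v_1,\dots,v_n \in \mathbb{F}^{r(M)}$ be an $\mathbb{F}$-realization of $M$, and let $e$ denote the new coloop added to form $M'$. I would define vectors in $\mathbb{F}^{r(M)+1}$ by setting $v_i' := (v_i,0)$ for each $i$ and $v_e' := (0,\dots,0,1)$, and show that $\{v_1',\dots,v_n',v_e'\}$ realizes $M'$.

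The verification proceeds by comparing dependent sets. Since $e$ is a coloop of $M'$, recall that $M' \setminus e = M$ and that every circuit of $M'$ avoids $e$; equivalently, a subset $S \subseteq E(M')$ is dependent in $M'$ if and only if $S \setminus \{e\}$ is dependent in $M$. Thus I would split into two cases. For a subset $S$ not involving $v_e'$, the zero-padding $v_i \mapsto (v_i,0)$ is an injective linear map, so linear dependencies among the $v_i'$ match those among the $v_i$; by construction of $M$ these are exactly the dependencies of $M$, which coincide with the dependencies of $M'$ on such $S$. For a subset containing $v_e'$, any nontrivial linear relation must have coefficient zero on $v_e'$, because $v_e'$ is the unique vector with a nonzero last coordinate; hence the remaining vectors must themselves be linearly dependent, and again the dependency matches that of the corresponding subset in $M$.

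The main obstacle, if any, is purely bookkeeping: ensuring that the two criteria (dependency in $M'$ versus linear dependency in $\mathbb{F}^{r(M)+1}$) match on every subset, which reduces to the two bullet points above. No delicate genericity or field-size assumption is needed, so unlike Lemma~\ref{lem:extension} the argument works uniformly over any field $\mathbb{F}$.
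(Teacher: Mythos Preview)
Your proposal is correct and takes essentially the same approach as the paper: the paper's proof simply says to append a new row and column to the realizing matrix, all zero except at their intersection, which is exactly your construction $v_i' = (v_i,0)$, $v_e' = (0,\dots,0,1)$. You supply the verification of dependencies that the paper leaves implicit, but the idea is identical.
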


\begin{proof}
Let $A$ be a matrix whose columns form a realization of $M$. We construct a matrix realizing $M'$ by adding a row and column to $A$ such that all entries of this row and column are $0$ except for the entry contained in both the row and column. 
\end{proof}

\begin{proposition}
\label{prop:induct}
Let $\mathcal{C}=(\mathcal{P},\mathcal{L},\mathcal{I})$ be a forest-like point and line configuration, and let $M$ be the simple matroid of rank at most $3$ associated with it. Then $M$ is realizable over all infinite fields.
\end{proposition}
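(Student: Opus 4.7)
The plan is induction on $|\mL|$. When $\mL = \emptyset$, the matroid $M$ is uniform of rank $\min(|\mP|,3)$ and hence realizable over any infinite field by choosing points in general position. For the inductive step, Lemma~\ref{lem:one-line} provides a line $\ell \in \mL$ meeting $\MC \setminus \ell$ in at most one point of $\mP$. The graph $G_{\MC \setminus \ell}$ is obtained from $G_\MC$ by deleting one edge-path (and any endpoints lying solely on $\ell$), so it remains a forest; hence $\MC \setminus \ell$ is still forest-like, with strictly fewer lines, and by induction its associated matroid $M'$ is realizable over any infinite field $\mathbb{F}$.

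To extend a realization of $M'$ to one of $M$, enumerate the points of $\ell$ that are not already present in $\MC \setminus \ell$ as $q_1, \dots, q_k$. If $r(M) > r(M')$ (which happens precisely when $\MC \setminus \ell$ is contained in a single line or is very small), first apply Lemma~\ref{lem:coloop} to pad $M'$ with coloops up to rank $r(M)$; these auxiliary coloops will be discarded from the final realization. We now split into two cases according to whether $\ell$ shares a point $p$ with $\MC \setminus \ell$. In the first case, freely add $q_1$ to the entire ground set of $M'$, regarded as its top flat; this places $q_1$ generically and forces the closure $F$ of $\{p,q_1\}$ to be a rank-$2$ flat equal to $\{p,q_1\}$. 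We then freely add $q_2,\dots,q_k$ in turn to $F$, which grows by one element at each step. In the second case, we first freely add $q_1$ and then $q_2$ to the top flat, so that $F = \{q_1,q_2\}$ is a generic rank-$2$ flat, and then freely add $q_3,\dots,q_k$ to $F$ as in the first case. By Lemma~\ref{lem:extension}, each free extension preserves realizability over $\mathbb{F}$.

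The main technical point will be to verify that this sequence of single-element free extensions produces precisely the matroid $M$, and not some further specialization of it. This requires unwinding the definition of freely adding to a flat at each step: the newly added $q_i$ must be shown to lie on the designated line $F$ but on no other line of the matroid, and the dependency pattern on the previously placed points must be preserved. The hypothesis that $\MC$ is forest-like is exactly what ensures no unexpected incidences are forced, because it rules out the cyclic incidence patterns among lines that could otherwise impose extra coplanarity constraints when the last few $q_i$ are added. Once this combinatorial bookkeeping is carried out, the constructed matroid has the same collection of circuits as $M$, completing the induction.
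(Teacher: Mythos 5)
Your proof follows essentially the same route as the paper's: induction on $|\mathcal{L}|$, base case via uniform matroids and free extensions, Lemma~\ref{lem:one-line} to extract a pendant line $\ell$, and then one-element-at-a-time free extensions (Lemma~\ref{lem:extension}) together with Lemma~\ref{lem:coloop} to rebuild $\ell$. The only genuine divergence is in how you raise the rank when $\mathcal{C}\setminus\ell$ has rank less than $3$: you propose padding $M'$ with \emph{auxiliary} coloops outside the ground set and later deleting their columns, whereas the paper simply takes the first new point of $\ell$ (its $w$ or $w_1$) to \emph{be} the coloop, so nothing is ever discarded. Your detour is correct (realizability is preserved under deletion), but the paper's formulation is tighter and avoids the need to argue that the auxiliary elements don't interfere with the later free extensions. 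Also, you flag but do not carry out the verification that the resulting matroid equals $M$; the paper asserts this without a detailed check either, so the two write-ups are at a comparable level of rigor on that point. One small imprecision: after the first free extension the padded elements are no longer coloops (generic bases omit them), so calling them "auxiliary coloops" at the deletion step is loose language, though the deletion itself is of course still valid.
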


\begin{proof}
We proceed by induction on the number of lines in $\mathcal{L}$. If $|\mathcal{L}|=0$, then $M$ is the uniform matroid $U_{r,|\mP|}$, where $r=\min\{3,|\mP|\}$. For all positive integers $n$, it is clear that $U_{n,n}$ is realizable over all fields. If $n>3$, then $U_{3,n}$ is obtained from $U_{3,3}$ by repeatedly freely adding elements to the rank-$3$ flat of the matroid. Therefore, by Lemma \ref{lem:extension}, $U_{r,|\mP|}$ is realizable over all infinite fields for all values of $n$.

Now, suppose that we have shown that the result holds for all forest-like configurations with at most $k-1$ lines and we wish to prove the result for $\mathcal{C}=(\mathcal{P},\mathcal{L},\mathcal{I})$, where $|\mathcal{L}|=k$. By Lemma \ref{lem:one-line}, there is a line $\ell$ in $\mL$ that intersects with at most one of the points in $\MC\setminus\ell$. By the induction hypothesis, the matroid associated with $\MC\setminus\ell$ is realizable over all infinite fields.

First, we consider the case where $\ell$ intersects with a point $v$ in $\MC\setminus\ell$. Let $w$ be an additional point in $\ell$, and let $T=\ell\setminus\{v,w\}$. The matroid $M\backslash T$ is obtained from $\MC\setminus\ell$ either by adding $w$ as a coloop or by freely adding $w$ to the unique rank-$3$ flat (depending on the rank of $\MC\setminus\ell$). Then $M$ is obtained from $M\backslash T$ by freely adding the points in $T$ to the rank-$2$ flat defined by $v$ and $w$.

Now, we consider the case where $\ell$ and $\MC\setminus\ell$ have no points in common. Let $w_1$ and $w_2$ be points in $\ell$, and let $T=\ell\setminus\{w_1,w_2\}$. The matroid $M\backslash(T\cup\{w_2\})$ is obtained from $\MC\setminus\ell$ either by adding $w_1$ as a coloop or by freely adding $w_1$ to the unique rank-$3$ flat (depending on the rank of $\MC\setminus\ell$). Then $M\backslash T$ is obtained from $M\backslash(T\cup\{w_2\})$ by freely adding $w_2$ to the unique rank-$3$ flat. Then $M$ is obtained from $M\backslash T$ by freely adding the points in $T$ to the rank-$2$ flat defined by $w_1$ and $w_2$.

In either case, Lemmas \ref{lem:extension} and \ref{lem:coloop} imply that $M$ is realizable over all infinte fields.
\end{proof}

One can see that the last line $\ell$ to be added to a forest-like configuration in the inductive process described in the proof of Proposition \ref{prop:induct} has the property that $|\{p \in \ell : |\mL_p| \ge 3 \}| \le 1$. In particular, the only point $p\in\ell$ that might have the property that $|\mL_p| \ge 3 $ is the one point that was on at least one of the other $|\mathcal{L}|-1$ lines in $\mathcal{L}$.
 
 So, we have the following straightforward corollary of Theorem~\ref{thm:lineArrangementBuildUpGeneral}.

\begin{corollary}
\label{thm:forestlike_irred}
The realization space of a forest-like configuration is irreducible with~respect~to~Zariski~topology.
\end{corollary}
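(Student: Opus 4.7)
The plan is to proceed by induction on the number of lines $|\mL|$ of the configuration $\MC = (\mP, \mL, \mI)$, using Theorem~\ref{thm:lineArrangementBuildUpGeneral} as the inductive step. The forest-like structure of $\MC$ will guarantee, via Lemma~\ref{lem:one-line}, that the required hypothesis on the set $S$ is always satisfied when we peel off a line.

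For the base case $|\mL| = 0$, the associated matroid is the uniform matroid $U_{r,|\mP|}$ with $r = \min\{3, |\mP|\}$, and Proposition~\ref{prop:induct} ensures it is realizable. The realization space $\Gamma_\MC$ consists of tuples of vectors in $\CC^d$ with no prescribed dependencies beyond the ambient rank, and so it is the complement in the irreducible variety $(\CC^d)^{|\mP|}$ of the vanishing locus of a finite collection of minors. As a nonempty Zariski open subset of an irreducible variety, it is irreducible.

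For the inductive step, suppose the result holds for all forest-like configurations with fewer than $|\mL|$ lines. By Lemma~\ref{lem:one-line}, there exists a line $\ell \in \mL$ which intersects $\MC \setminus \ell$ in at most one point of $\mP$. Consequently, the set $S = \{p \in \ell : |\mL_p| \ge 3 \}$ can contain at most this single point of intersection, so $|S| \le 1 \le 2$. Moreover, $\MC \setminus \ell$ is again forest-like: deleting $\ell$ from $\MC$ corresponds to removing from $G_\MC$ the edges of the path associated with $\ell$ (and any vertices isolated by this removal), and a subgraph of a forest is a forest. By the induction hypothesis, $\Gamma_{\MC \setminus \ell}$ is irreducible, and therefore Theorem~\ref{thm:lineArrangementBuildUpGeneral} applies to yield that $\Gamma_\MC$ is irreducible.

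The main content of the argument is not in the induction itself but in verifying the hypothesis of Theorem~\ref{thm:lineArrangementBuildUpGeneral}, and this is precisely what the forest-like condition is engineered to guarantee through Lemma~\ref{lem:one-line}. The potential obstacle would be a configuration in which every line meets at least two other lines at points where three or more lines already concur, but such a configuration would force a cycle in $G_\MC$, contradicting forest-likeness. Thus the combinatorial restriction feeds directly into the geometric irreducibility criterion, and the proof is complete.
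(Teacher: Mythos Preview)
Your proof is correct and follows essentially the same approach as the paper: the paper observes that the inductive construction in Proposition~\ref{prop:induct} (via Lemma~\ref{lem:one-line}) always produces a line $\ell$ with $|\{p\in\ell:|\mL_p|\ge 3\}|\le 1$, so Theorem~\ref{thm:lineArrangementBuildUpGeneral} applies at each step. You have spelled out the induction and the base case more explicitly than the paper does, but the argument is the same.
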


The following proof introduces the \emph{perturbation procedure} which we will use throughout this section.

\begin{lemma}[Perturbation procedure]\label{lem:perturbation_procedure}
Let $\MC$ be a forest-like configuration with realization space $\Gamma_{\MC} \subseteq \CC^d$. Assume that $|\mL_p|\leq2$ for every $p\in \mP$. For every $\epsilon > 0$ and for any $A \in V^{\comb}_{\MC} \backslash \Gamma_{\MC}$ there exists $A' \in \Gamma_{\MC}$ such that $||A - A'|| < \epsilon$, where $|| \cdot ||$ is the Euclidean norm on $\CC^{d \times n}$.

\end{lemma}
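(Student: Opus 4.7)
The plan is to induct on the number of lines $|\mL|$, peeling off at each stage a line that meets the rest of the configuration in at most one point, as guaranteed by Lemma~\ref{lem:one-line}. In the base case $|\mL|=0$ the associated matroid is uniform (of rank $\min\{3,|\mP|\}$), and $\Gamma_\MC$ is the open locus in $V^{\comb}_\MC$ where no forbidden coincidences or extra collinearities occur, so any $A$ can be perturbed to a realization by an arbitrarily small generic displacement.

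For the inductive step, I would pick via Lemma~\ref{lem:one-line} a line $\ell\in\mL$ that meets $\MC\setminus\ell$ in at most one point $p^{*}$, and set $\MC'=\MC\setminus\ell$. The sub-configuration $\MC'$ is again forest-like and inherits $|\mL_p|\le 2$. The restriction $A|_{\MC'}$ lies in $V^{\comb}_{\MC'}$, since every line incidence of $\MC'$ is a line incidence of $\MC$. The inductive hypothesis applied with tolerance $\epsilon/2$ then produces $B\in\Gamma_{\MC'}$ with $\|A|_{\MC'}-B\|<\epsilon/2$. The second step is to extend $B$ to a realization of the full $\MC$ by placing the points of $\ell$ that are not already in $\MC'$ (either all points of $\ell$, or $\ell\setminus\{p^{*}\}$) on a single line of $\CC^d$: choose a $2$-dimensional subspace $W\subseteq\CC^d$ close to $\mathrm{span}\{A_p : p\in\ell\}$ and, if $p^{*}$ exists, containing $B_{p^{*}}$; then pick points on $W$ close to the corresponding vectors of $A$.

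The heart of the argument is verifying that this placement can be chosen generically enough to land in $\Gamma_\MC$. Here the hypothesis $|\mL_p|\le 2$ is essential: it guarantees that each remaining point of $\ell$ lies on no other line of $\MC$, so the only constraints are membership in $W$ and avoidance of a finite list of forbidden degeneracies, namely coincidences with previously placed vectors, accidental rank-$2$ dependencies with a single vector of $B$, and accidental rank-$3$ dependencies with pairs of vectors of $B$ that do not both lie on a line of $\MC$ through the new point. Each forbidden locus cuts out a proper Zariski-closed subvariety of the parameter space of placements on $W$, so the complement is Zariski open and dense; since $A$ itself provides a nearby (degenerate) candidate, a small perturbation produces a valid $A'\in\Gamma_\MC$ with $\|A-A'\|<\epsilon$.

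The main obstacle I expect is bookkeeping rather than conceptual: one must verify that the extension step of the induction can be carried out with a controlled bound on the Euclidean perturbation, uniformly over the finitely many forbidden loci, and also that the rank-$\le 3$ constraint of $M_\MC$ is respected by keeping $W$ inside a $3$-dimensional subspace of $\CC^d$ containing $B$. Both follow from the density of the valid locus and standard continuity of the parametrization $W\mapsto$ (vectors on $W$), but the case analysis on whether $p^{*}$ exists and on the rank of $\MC'$ has to be handled with care. The forest-like hypothesis plus $|\mL_p|\le 2$ are precisely what keep this combinatorics finite and decoupled at each inductive step.
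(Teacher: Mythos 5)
Your proposal follows essentially the same route as the paper's proof: induct on the number of lines, invoke Lemma~\ref{lem:one-line} to peel off a line $\ell$ meeting $\MC\setminus\ell$ in at most one point, apply the inductive hypothesis to $\MC\setminus\ell$, and then reinsert the points of $\ell$ while avoiding forbidden degeneracies, using $|\mL_p|\le 2$ to keep the constraints decoupled. The only real difference is presentational --- the paper spells out an explicit four-step perturbation sequence (moving $\ell$ off spurious points, rotating it off coincident lines, separating coincident/loop points, and avoiding accidental collinearities), whereas you argue more compactly by Zariski density of the valid locus --- and both handle the rank-$\le 3$ constraint and the case split on whether the attachment point $p^*$ exists in the same way.
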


\begin{proof}
We think of $A$ as a realization of a configuration $\MC_A$. As $A \in V^{\comb}_{\MC} \backslash \Gamma_{\MC}$ and the rank of the corresponding matroid is at most $3$, the dependencies satisfied by the configuration $\MC_A$ that are not in $\MC$ are the following:
\begin{itemize}
    \item A point of $\MC$ may be a loop in $\MC_A$,
    \item Two distinct points of $\MC$ may coincide in $\MC_A$,
    \item Two distinct lines in $\MC$ may coincide in $\MC_A$,
    \item A triple of non-collinear points in $\MC$ may lie on a common line in $\MC_A$.
\end{itemize}

We now construct the realization $A'$ by induction on the number of lines. For the base case, assume that the configuration $\MC_A$ has no lines. Let us form $A'$ by going through the points in order $p_1, \dots, p_r$
and perturbing the corresponding vector in $A$ by at most $\epsilon / r$. For each $i \in [r]$, the vector corresponding to point $p_i$ is perturbed such that it forms no dependencies, listed above, with subsets of points from $\{p_j : 1 \le j \le i \}$.

For the induction step, let us assume that for any forest-like configuration $\MC' = (\MP', \ML', \mI')$ with at most $n-1$ lines, we have that for all $\epsilon > 0$ and any point $x \in V^{\comb}_{\MC'}$ there exists $x' \in \Gamma_{\MC'}$ such that $||x - x'|| < \epsilon$. Since the graph $G_\MC$ is a forest, we may take $\ell \in \ML$ to be a line which intersects $\MC \backslash \ell$ in at most one point. By the inductive hypothesis, we can perturb vectors in $A$ corresponding to the configuration $\MC \backslash \ell$ so that no additional dependencies are satisfied. It remains to show that the points on the line $\ell$ may be perturbed to remove all additional dependencies. We proceed by applying some or all of the following steps in order. In particular, after each step we ensure that no new dependencies are introduced which would be removed in a previous step. 

\begin{enumerate}

    \item[(a)] Suppose that the line $\ell$ intersects a point $p$ in $\MC_A$ which does not belong to $\ell$ in $\MC$. We may perturb the line $\ell$ in $A$ to produce $A'$ so that $\ell$ does not pass through the point $p$. In particular, if $\ell$ contains $k$ points which are not $p$, then we perturb each point by at most $\epsilon / k$.

    \item[(b)] Suppose that the line $\ell$ coincides with another line $\ell'$ in $\MC_A$. Since $\ell$ intersects $\MC \backslash \ell$ in at most one point, we may rotate the line $\ell$ in $A$ (i.e.~perturb each point along the line) by small amount to obtain $A'$ so that $\ell$ does not coincide with any other line. 
    In particular, if $\ell$ contains $k$ points which do not lie on the intersection of $\ell$ and $\ell'$ in $\MC$, then we perturb each point by at most $\epsilon / k$.
    
    \item[(c)] Suppose that two points in $\ell$ coincide or some of the points are loops. We may perturb these points away from each other so that they remain on the line $\ell$. Similarly if any of the points are loops, then we can perturb these points away from zero by the same method.  In particular, if a point $p$ in $\mathcal C_A$ is a loop then we recall that $p$ is incident to at most two lines. If it is incident to exactly two lines $\ell, \ell'$ in $\mathcal C$, then we may perturb $p$ along $\ell \cap \ell'$. Otherwise if $p$ is incident only to $\ell$ in $\mathcal C$ then we may perturb it to some non-zero point on $\ell$.

    \item[(d)] Suppose that for some point $p_i$ in $\ell$, there are two distinct points $p_j, p_k$ in $\MC \backslash \ell$ such that $p_i, p_j$ and $p_k$ lie on a line $\ell'$ in $\MC_A$ but do not lie on a line in $\MC$. By the step~(a) of the procedure, we have that $\ell$ and $\ell'$ are distinct lines. For this case, it is useful to consider all lines of $\MC_A$, including those which contain only two points. The points of intersection between $\ell$ and all other distinct lines of $\MC_A$ is a finite set. Since we work over the infinite field $\CC$ and the line is homeomorphic to $\CC^1$ with respect to Euclidean topology, therefore we may perturb $p_i$ along $\ell$ so that it lies only on $\ell$ and no other line.
\end{enumerate}
As a result of the above procedure, we have constructed a realization $A'$ of 
$\MC$ with $||A - A'|| < \epsilon$. 
\end{proof}

\begin{figure}
    \centering
    \includegraphics[scale = 0.75]{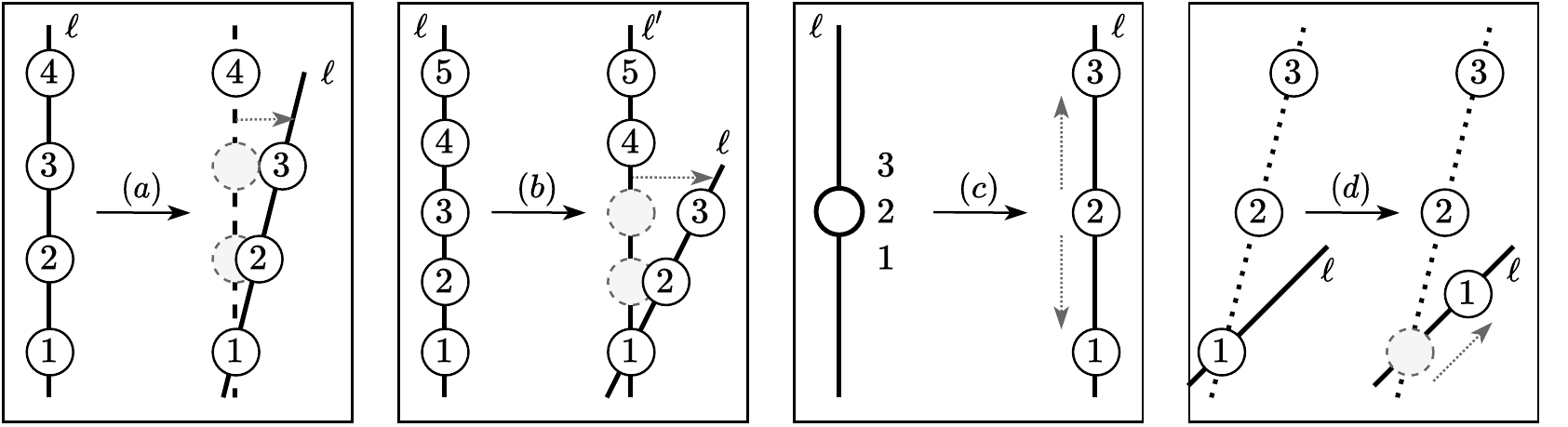}
    \caption{Depiction of the steps in the perturbation procedure: (a) line $\ell$ is perturbed away from point $4$, (b) points $2,3 \in \ell'$ are perturbed off $\ell$, (c) coincident points $1,2,3$ are perturbed away from each other along $\ell$ and (d) point $1$ is perturbed along $\ell$ away from the intersection.}
    \label{fig:perturbation_arguments}
\end{figure}

\begin{remark}
By perturbation procedure, the corresponding 
configurations are realizable~over~any~subfield~of~$\CC$.

\end{remark}

Let $\MC$ be the configuration with two lines $123$ and $345$. Consider the following points in $V^{\comb}_{\MC}$:
\begin{eqnarray}\label{eq:A_Example}
A = \begin{bmatrix}
1 & 1 & 0 & 0 & 0 \\
0 & 1 & 0 & 1 & 0 \\
0 & 0 & 0 & 1 & 1 
\end{bmatrix}
\quad \text{and} \quad
A' = 
\begin{bmatrix}
1 & 1 & 0 & 0 & 0 \\
0 & 1 & \epsilon & 1 & 0 \\
0 & 0 & 0 & 1 & 1 
\end{bmatrix}.
\end{eqnarray}
Note that $A \notin \Gamma_\MC$ is not a realization of $\MC$ because $3$ is a loop in $\MC_A$. See Figure~\ref{fig: perturbation example}. Following the perturbation argument, we perturb $A$ to $A'$ which corresponds to moving point $3$ so that it is non-zero and lies on the intersection of the planes spanned by $1$, $2$ and $4$, $5$.

\begin{figure}[h]
    \centering
    \includegraphics[scale = 0.7]{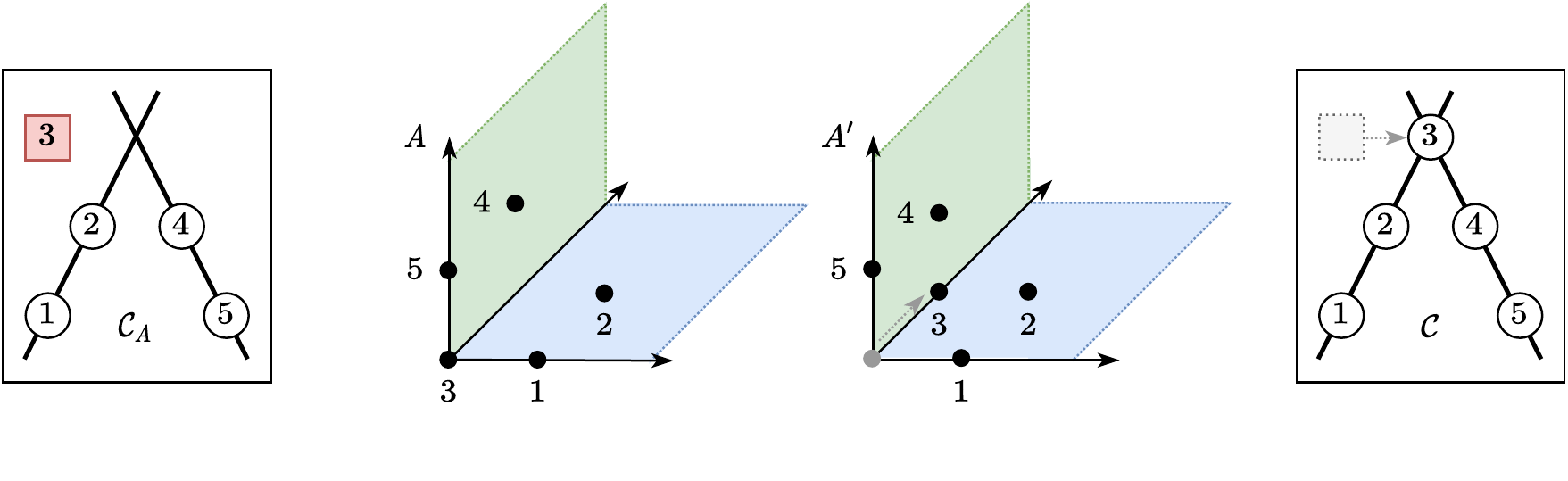}
    \caption{On the right a configuration $\MC$ and on the left the configuration $\MC_A$ for the matrix $A$ in \eqref{eq:A_Example}, together with a depiction of the perturbation of point $3$ taking $A \in V^{\comb}_{\MC} \backslash \Gamma_\MC$ to $A' \in \Gamma_\MC$. The loop $3$ in $\MC_A$ is shown in a square and the shaded planes in $\CC^3$ correspond to the lines in $\MC_A$ and $\MC$.}
    \label{fig: perturbation example}
\end{figure}

\vspace{-3mm}

\begin{theorem}\label{thm:forestlike_closure}
Let  $\MC=(\mP,\mL, \mI)$  be a forest-like configuration with $|\mL_p|\leq2$ for every $p\in \mP$. Then the combinatorial closure $V^{\comb}_\MC$ coincides with the matroid variety $V_\MC$ and in particular is irreducible.
\end{theorem}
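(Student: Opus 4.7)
The plan is to deduce the theorem essentially directly from the perturbation procedure (Lemma~\ref{lem:perturbation_procedure}) together with Corollary~\ref{thm:forestlike_irred}. The inclusion $V_\MC \subseteq V^{\comb}_\MC$ is immediate from Definition~\ref{def:comb_closure}, so the whole content is the reverse inclusion $V^{\comb}_\MC \subseteq V_\MC$. Irreducibility will then follow for free because $V_\MC = \overline{\Gamma_\MC}$ and $\Gamma_\MC$ is irreducible by Corollary~\ref{thm:forestlike_irred} (applied under the hypothesis that $\MC$ is forest-like).

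The first step is to pick an arbitrary $A \in V^{\comb}_\MC$. If $A \in \Gamma_\MC$ there is nothing to show, so assume $A \in V^{\comb}_\MC \setminus \Gamma_\MC$. Because $|\mL_p|\leq 2$ for every $p\in\mP$, the hypothesis of Lemma~\ref{lem:perturbation_procedure} is satisfied, so for every $\epsilon>0$ there exists $A_\epsilon \in \Gamma_\MC$ with $\|A-A_\epsilon\|<\epsilon$. Hence $A$ lies in the Euclidean closure of $\Gamma_\MC$.

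The second step is to pass from Euclidean closure to Zariski closure. This uses the standard fact that the realization space $\Gamma_\MC$ is a constructible set (it is the complement, inside the Zariski-closed set $V^{\comb}_\MC$, of the vanishing locus of the bases product, cf.\ Proposition~\ref{prop:saturation}), and for constructible sets the Euclidean closure coincides with the Zariski closure. Thus the Euclidean closure of $\Gamma_\MC$ equals $V_\MC$, and so $A \in V_\MC$. This gives $V^{\comb}_\MC \subseteq V_\MC$, hence equality.

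Finally, irreducibility of $V^{\comb}_\MC = V_\MC = \overline{\Gamma_\MC}$ is inherited from the irreducibility of $\Gamma_\MC$, which is Corollary~\ref{thm:forestlike_irred}. I do not expect any serious obstacle here: the perturbation lemma was designed precisely so that this argument goes through, and the only delicate point is the Euclidean-to-Zariski closure step, which is a standard application of constructibility.
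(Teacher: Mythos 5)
Your proposal is correct and follows essentially the same route as the paper: use the perturbation procedure to exhibit every point of $V^{\comb}_\MC$ as a Euclidean limit point of $\Gamma_\MC$, then conclude membership in $V_\MC = \overline{\Gamma_\MC}$, with irreducibility inherited from Corollary~\ref{thm:forestlike_irred}. The one small overcomplication is invoking constructibility to get \emph{equality} of Euclidean and Zariski closures; the paper uses only the trivial one-sided inclusion (the Euclidean closure of $\Gamma_\MC$ is contained in its Zariski closure, since the Zariski topology is coarser), which is all that is needed.
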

\begin{proof}
Recall that $V^{\comb}_\MC = V_{C(\MC)}$ where $C(\MC)$ denotes the circuits of the matroid corresponding to $\MC$. So it remains to show that $V_{C(\MC)} = V_{\MC}$. It is clear that $V_{\MC} \subseteq V_{C(\MC)}$. For the opposite inclusion, by the perturbation procedure each $A \in V_{C(\MC)}$ is a limit point of the configuration space $\Gamma_{\MC}$ with respect to the Euclidean topology. This implies $V_{C(\MC)} \subseteq V_{\MC}$, since the Zariski topology is coarser than the Euclidean topology.
\end{proof}
\vspace{-2mm}

\begin{definition}\label{def: setting points to loops}
Let $\MC$ be a point and line configuration and $p$ a point. Let $\MD$ be the dependent sets of the matroid associated to $\MC$ on ground set $\MP$. By \textit{setting $P \subseteq \MP$ to be loops}, we mean the matroid $M$ whose dependent sets are given by $\MD \cup \{D \subseteq \MP \colon P \cap D \neq \emptyset\}$. In other words the circuits of $M$ are: the circuits of the matroid associated to $\MC$ which do not meet $P$; and $P$ thought of as a set of loops.
\end{definition}

Before proving an analogous result for a general forest-like point and line configuration, we consider the following simple configuration which has at most one point that lies on more than two lines.

\begin{lemma}\label{lem:star_irred_comps}
Let $\MC = (\MP, \ML, \mI)$ be a forest-like configuration which has a unique point $p := p_i$ such that $|\ML_{p}| > 2$ and for all other points $p_j$ we have $|\ML_{p_j}| \le 2$.
Then the combinatorial closure $V^{\comb}_\MC$ has exactly two irreducible components: one is the central component and the other arises by setting the intersection point $p$ to be a loop.
\end{lemma}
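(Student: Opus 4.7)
Let $\MC_p$ denote the matroid obtained from $\MC$ by setting the point $p$ to be a loop, in the sense of Definition~\ref{def: setting points to loops}. The plan is to show that $V^{\comb}_\MC = V_\MC \cup V_{\MC_p}$, that both pieces are irreducible, and that neither is contained in the other; these three facts together establish the lemma.

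Irreducibility of the central component $V_\MC$ is immediate from Corollary~\ref{thm:forestlike_irred}. For $V_{\MC_p}$, I would reduce to an auxiliary configuration $\MC''$ on the vertex set $\MP\setminus\{p\}$ whose lines are (a) every line of $\MC$ not passing through $p$, and (b) for each $\ell\in\ML_p$ with $|\ell|\ge 4$, the truncated line $\ell\setminus\{p\}$; lines of $\ML_p$ of size exactly three are discarded, since their two remaining points become independent. Using Lemma~\ref{lem:ind-of-order} to order the points of $\MP$ with $p$ placed first on each of its incident lines, one sees that $G_{\MC''}$ is obtained from $G_\MC$ by deleting the vertex $p$ together with its incident edges, so $\MC''$ is still forest-like; since $p$ was the unique high-multiplicity point of $\MC$, every vertex of $\MC''$ satisfies $|\ML_q|\le 2$. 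Theorem~\ref{thm:forestlike_closure} then gives $V^{\comb}_{\MC''}=V_{\MC''}$, irreducible, and since $V_{\MC_p}$ is obtained from $V_{\MC''}$ by fixing the $p$-column of the ambient matrix to be zero, $V_{\MC_p}$ is isomorphic to $V_{\MC''}$ and hence irreducible.

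The inclusion $V_\MC\cup V_{\MC_p}\subseteq V^{\comb}_\MC$ follows from $\MC\le\MC_p$ in the dependency order together with the definition of the combinatorial closure. For the reverse inclusion, take $A\in V^{\comb}_\MC$ and split on whether the $p$-column $A_p$ vanishes. If $A_p=0$, every circuit constraint of $\MC$ involving $p$ is automatic, so by Lemma~\ref{lem:com}, $A$ lies in $V^{\comb}_{\MC_p}$; the identification of the previous paragraph extends to give $V^{\comb}_{\MC_p}\cong V^{\comb}_{\MC''}=V_{\MC''}\cong V_{\MC_p}$, so $A\in V_{\MC_p}$. If instead $A_p\ne 0$, the task is to exhibit $A$ as a Euclidean limit of realizations of $\MC$. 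I would do this by extending the perturbation procedure of Lemma~\ref{lem:perturbation_procedure} while holding the nonzero vector $A_p$ fixed throughout the argument. The key observation is that once $A_p$ is fixed, every line in $\ML_p$ must lie in a $2$-plane containing $A_p$, giving a single rotational degree of freedom around the axis $\langle A_p\rangle$: this allows us to separate coincident lines through $p$ by small rotations and to move collapsed points along their respective planes. The remaining portion of $\MC$ is treated by peeling off leaves via Lemma~\ref{lem:one-line}: removing a line not through $p$ preserves the hypothesis that $p$ is the unique high-multiplicity point of the smaller configuration, so the induction hypothesis applies, while if removing a line through $p$ reduces $|\ML_p|$ to two the resulting configuration falls directly under Lemma~\ref{lem:perturbation_procedure}.

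Finally, to verify that both components are genuine, note that a generic realization in $\Gamma_\MC$ has $A_p\ne 0$, so $V_\MC\not\subseteq V_{\MC_p}$. Conversely, the $|\ML_p|\ge 3$ planes through a prospective nonzero $A_p$ determined by the non-$p$ points on the lines in $\ML_p$ share only the origin for generic data, so a generic realization of $\MC_p$ is not a Euclidean limit of realizations of $\MC$, giving $V_{\MC_p}\not\subseteq V_\MC$. The main obstacle throughout is the perturbation step when $A_p\ne 0$: one must perform the rotations around the fixed axis $\langle A_p\rangle$ simultaneously with the free perturbations of points and lines away from $p$ without reintroducing any forbidden incidences, and this is where the forest-like hypothesis, together with the uniqueness of the high-multiplicity point, is used in an essential way.
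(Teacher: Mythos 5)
Your argument is correct and follows essentially the same route as the paper's proof: decompose $V^{\comb}_\MC$ as $V_\MC\cup V_{\MC_p}$, split on whether $A_p$ vanishes, apply the perturbation procedure to the residual configuration with $p$ removed in the zero case, and adapt the perturbation to $\MC$ in the nonzero case (where $A_p\neq 0$ guarantees the collinearities through $p$ are nontrivially enforced). You additionally spell out the irredundancy of the two components and the reduction to the auxiliary configuration $\MC''$ for the irreducibility of $V_{\MC_p}$, both of which the paper leaves largely implicit.
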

\begin{proof}
For each element $A \in V^{\comb}_{\MC}$, we write $A_p \in \CC^3$ for the column vector corresponding to the intersection point $p$. We have that $V_{\MC}$ is the central component of $V^{\comb}_{\MC}$ and we write $V_0 = \{A \in V^{\comb}_{\MC} : A_p = \underline 0\}$ for the collection of elements of the combinatorial closure where the intersection point is zero. Let $\MC_0$ be the configuration obtained from $\MC$ by setting the intersection point to be a loop. Clearly, we have $V_{\MC_0} \cup V_{\MC} \subseteq V_0 \cup V_{\MC} \subseteq V^{\comb}_{\MC}$. Since $\Gamma_{\MC_0}$ and $\Gamma_{\MC}$ are irreducible by Corollary~\ref{thm:forestlike_irred}, the lemma follows from showing that $V_{\MC_0} \cup V_C = V^{\comb}_{\MC}$ by proving the opposite inclusion.

Take any element $A \in V^{\comb}_{\MC}$ and fix $\epsilon > 0$. We will show that there exists $A' \in \Gamma_{\MC_0} \cup \Gamma_{\MC}$ such that $||A - A' || < \epsilon$ by applying the perturbation procedure. Let us take cases on whether $A_p$ is zero.

\medskip
\noindent\textbf{Case 1.} Assume that $A_p = \underline 0$.
Since $\MC_0$ is a configuration that contains only points $p_j$ with $|\ML_{p_j}| \le 2$, we may apply the perturbation procedure to $\MC_0 \backslash p$.
As a result we have $A' \in \Gamma_{\MC_0}$.

\medskip
\noindent\textbf{Case 2.} Assume that $A_p \neq \underline 0$.
We may now apply  steps (a) to (d) in the proof of the  perturbation procedure to construct a point $A' \in \Gamma_{\MC}$ with $||A - A'|| < \epsilon$. In the procedure,  by assumption we have that  $A_p \neq \underline 0$. This assumption guarantees that if $p, p_1, p_2$ are collinear points in $\MC$ then the corresponding points are collinear in $\MC_A$, and all lines arising in this way pass through the common point $p$.
\end{proof}

We are now ready to state our main result in this section for a general forest-like point and line configuration. 
\begin{theorem}\label{thm:forestlike_comp}
Let $\MC=(\mP,\mL, \mI)$ be a forest-like 
configuration. Let $S = \{ p \in \mP : |\mL_p| \ge 3 \}$ be the collection of points contained in at least $3$ lines. Then the combinatorial closure $V^{\comb}_\MC$ has at most $2^{|S|}$ irredundant irreducible components. Moreover, these components can be obtained 
from $\MC$ by setting a subset of $S$ to be loops.
\end{theorem}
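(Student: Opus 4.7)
The plan is to exhibit the candidate decomposition
\[
V^{\comb}_\MC = \bigcup_{T \subseteq S} V_{\MC_T},
\]
where $\MC_T$ denotes the configuration obtained from $\MC$ by setting the points in $T$ to be loops in the sense of Definition~\ref{def: setting points to loops}. Since there are exactly $2^{|S|}$ such subsets, this immediately yields the upper bound of $2^{|S|}$ on the number of irreducible components and the explicit description asserted by the theorem.

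First I would verify that each $V_{\MC_T}$ is irreducible. The realization space $\Gamma_{\MC_T}$ is obtained from $\Gamma_{\MC \setminus T}$ by inserting the zero vector at the coordinates indexed by $T$, so $V_{\MC_T} \cong V_{\MC \setminus T}$ as varieties. The sub-configuration $\MC \setminus T$ is still forest-like: its associated graph $G_{\MC \setminus T}$ is obtained from $G_\MC$ by deleting the vertices in $T$ together with their incident edges, and subgraphs of forests are forests. By Corollary~\ref{thm:forestlike_irred}, $\Gamma_{\MC \setminus T}$ is irreducible, and hence so is $V_{\MC_T} = \overline{\Gamma_{\MC_T}}$.

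Next I would establish the decomposition itself. The inclusion $\bigcup_T V_{\MC_T} \subseteq V^{\comb}_\MC$ is immediate from $\MC_T \ge \MC$ in the dependency order. For the reverse inclusion, fix $A \in V^{\comb}_\MC$ and define $T_A := \{ p \in S : A_p = \underline 0 \}$. Since the Zariski topology is coarser than the Euclidean one, it suffices to approximate $A$ by elements of $\Gamma_{\MC_{T_A}}$. I would prove this by extending the perturbation procedure of Lemma~\ref{lem:perturbation_procedure} and Lemma~\ref{lem:star_irred_comps} by induction on $|\mL|$ using Lemma~\ref{lem:one-line}: at each inductive step, pick a leaf line $\ell$ that intersects $\MC \setminus \ell$ in at most one point $q$. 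The key observation is that by definition of $S$, the only point on $\ell$ that can possibly lie in $S$ is $q$ itself, since every other point on $\ell$ satisfies $|\mL_p| = 1$. Steps (a)--(d) of Lemma~\ref{lem:perturbation_procedure} are then carried out along $\ell$, with one modification: all points in $S \setminus T_A$ (which by construction are nonzero in $A$) are kept fixed, and any line passing through such a point is rotated around it, as in Case~2 of the proof of Lemma~\ref{lem:star_irred_comps}.

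The main technical obstacle is verifying that this rotation has enough freedom when the intersection point $q$ lies in $S \setminus T_A$: in that case $\ell$ must remain incident to the nonzero vector $A_q$, but since $q$ is the \emph{unique} intersection point on $\ell$, pivoting $\ell$ around $A_q$ leaves a one-parameter family of admissible positions, which is sufficient to avoid the finitely many accidental collinearities, coincidences, and loop conditions enumerated in Lemma~\ref{lem:perturbation_procedure}. When instead $q \in T_A$, the constraint that $\ell$ passes through $q$ disappears in $\MC_{T_A}$ because $q$ is a loop, and $\ell$ behaves as a free line exactly as in step (b) of that proof. Iterating over all lines yields a realization $A' \in \Gamma_{\MC_{T_A}}$ within arbitrary Euclidean distance of $A$, so $A \in V_{\MC_{T_A}}$. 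Since the resulting union $\bigcup_{T \subseteq S} V_{\MC_T}$ is a finite union of irreducible varieties, the irredundant irreducible components of $V^{\comb}_\MC$ form a subfamily of $\{V_{\MC_T} : T \subseteq S\}$, yielding the bound $2^{|S|}$.
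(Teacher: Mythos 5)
Your overall strategy is the same as the paper's: propose $V^{\comb}_\MC = \bigcup_{T\subseteq S} V_{\MC_T}$, observe that each $V_{\MC_T}$ is irreducible via Corollary~\ref{thm:forestlike_irred} (your $V_{\MC_T}\cong V_{\MC\setminus T}$ step is a clean way to justify this), and establish the reverse inclusion by an $\epsilon$-perturbation. The genuine difference is the inductive structure. The paper inducts on $|S|$: it picks $p\in S$ so that the paths between other pairs of $S$ avoid $p$, removes the entire ``arm'' $P$ of the forest hanging off $p$, and applies the inductive hypothesis to $\MC\setminus P$, which is arranged precisely so that its special-point set is $S\setminus\{p\}$. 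You instead induct on $|\mL|$ via Lemma~\ref{lem:one-line}, removing a single leaf line $\ell$ at a time. Your key observation that $\ell$ meets $S$ in at most one point $q$ (namely the unique joint) is correct and does the work that $|\mL_p|\le 2$ did in Lemma~\ref{lem:perturbation_procedure}.

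There is one overcommitment. You fix $T_A=\{p\in S:A_p=0\}$ and claim the iteration lands in $\Gamma_{\MC_{T_A}}$. But if $q\in T_A$ has $|\mL_q|=3$, then after deleting $\ell$ the point $q$ drops out of $S(\MC\setminus\ell)$, so the inductive hypothesis for $\MC\setminus\ell$ cannot set $q$ to be a loop; it returns a realization with $A'_q\ne 0$ (small but nonzero), and after reattaching $\ell$ through $A'_q$ you land in $\Gamma_{\MC_{J}}$ with $q\notin J$, hence $J\ne T_A$. This does not break the theorem --- you only need $A'\in\Gamma_{\MC_J}$ for \emph{some} $J\subseteq S$, which your induction does produce since $S(\MC\setminus\ell)\subseteq S(\MC)$ --- but the sentence ``it suffices to approximate $A$ by elements of $\Gamma_{\MC_{T_A}}$'' should be weakened to ``by elements of $\Gamma_{\MC_{J}}$ for some $J\subseteq S$.'' The paper's induction on $|S|$ avoids this bookkeeping entirely because the removed arm never demotes a surviving special point. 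With that correction, your line-by-line induction is a legitimate alternative to the paper's argument, and is arguably closer in spirit to the proof of Lemma~\ref{lem:perturbation_procedure}.
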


\begin{proof}
For each element $A \in V^{\comb}_{\MC}$ and point $p \in \MP$, we write $A_p \in \CC^3$ for the column vector corresponding to the point $p$. For each subset $J \subseteq S$, we write $\MC_J$ for the configuration obtained from $\MC$ by setting the points in $J$ to be loops. We will show that $\bigcup_{J \subseteq S} V_{\MC_J} = V^{\comb}_{\MC}$. Note that by Corollary~\ref{thm:forestlike_irred}, each variety $V_{\MC_J}$ is irreducible. By construction, it is immediately clear that $V_{\MC_J} \subseteq V^{\comb}_{\MC}$ for each $J$. So, to prove the theorem, 
it remains to show the opposite inclusion, i.e.~$V^{\comb}_{\MC} \subseteq \bigcup_{J \subseteq S} V_{\MC_J}$.
Fix $A \in V^{\comb}_{\MC}$ and $\epsilon > 0$. We will construct $A' \in \Gamma_{\MC_J}$ for some $J\subseteq S$ such that $||A - A'|| < \epsilon$. 

We proceed by induction on $s := |S|$.
If $s = 1$, then the result follows by Lemma~\ref{lem:star_irred_comps}. So let us assume that $s > 1$. 
Since $G_{\MC}$ is a forest, we can find a point $p \in S$ such that the path (if it exists) between any other pair of points in $S$ does not pass through $p$. Let $P \subseteq \MP$ be the collection of points $q \in \MP$ lying in the same connected component of $G_\MC$ as $p$, such that for all $p' \in S \backslash \{ p\}$, if there is a path from $q$ to $p'$, then the path passes through $p$. By convention, we assume $p \in P$.

Consider the configuration $\MC' = (\mP', \mL', \mI')$ obtained from $\MC$ by removing points $P$. We have $S' := \{p \in \mP' : |\mL'| \ge 3 \} = S \backslash \{p\}$. Clearly, $\MC'$ is a forest-like configuration and $|S'| = s - 1$. So, by induction, we can perturb the points in $\mP'$ so that they lie in a realization space $\Gamma_{\MC'_{J'}}$ for some $J' \subseteq S'$. More precisely, there exists a subset $J' \subseteq S\backslash \{p \}$ and a realization $A_{\mP'}' \in \Gamma_{\MC'_{J'}}$ such that $||A_{\mP'} - A_{\mP'}'|| < \epsilon / 2$, where $A_{\mP'}$ is the set of vectors obtained from $A$ by removing the points $P$. We proceed by taking cases on whether $A_p$ is zero.

\noindent\textbf{Case 1.} Assume that $A_p = \underline 0$. Let $J = J' \cup \{p\}$, we construct $A' \in \Gamma_{\MC_J}$ from $A'_{\mP'}$ by taking the vectors in $A$ for the points in $P \backslash \{p\}$ and applying the perturbation procedure. By the perturbation procedure we have ensured that $A' \in \Gamma_{\MC_J}$ and by perturbing each point $p \in P$ by a distance of at most $\epsilon / 2|P|$, we have that $||A - A'|| \le ||A_{\mP'} - A'_{\mP'}|| + (|P| - 1) (\epsilon / 2|P|) < \epsilon$.

\noindent\textbf{Case 2.} Assume that $A_p \neq \underline 0$. 
Let $J = J'$ and construct $A' \in \Gamma_{\MC_J}$ from $A'_{\mP'}$ by taking the vectors in $A$ for points in $P$ and applying  steps (a) to (d) in the proof of the  perturbation procedure. In the procedure,  by assumption we have that  $A_p \neq \underline 0$. This assumption guarantees that all lines of $\MC$ passing through $p$ are contained in a line of $\MC_A$ passing through $p$.
As a result we obtain a realization $A' \in \Gamma_{\MC_J}$ and by perturbing each point $p \in P$ by a distance of at most $\epsilon / 2|P|$, we have that $||A - A'|| \le ||A_{\mP'} - A'_{\mP'}|| + |P| (\epsilon / 2|P|) < \epsilon$.
\end{proof}

\begin{remark}
It is not hard (but a bit tedious) to classify all irreducible components of $V^{\comb}_\MC$ for forest-like configurations $\MC$. This classification follows from the same proof of Theorem~\ref{thm:forestlike_comp} by taking into account that components may become redundant; see Example~\ref{example:E_irred_comps}.
\end{remark}

\begin{example}\label{example:E_irred_comps}
Consider the    
configuration $\MC$ in Figure~\ref{fig:E_irred_comps}. Note that the points $1, 2$ and $3$ have degree $3$ in the graph $G_{\MC}$. So by Theorem~\ref{thm:forestlike_comp}, $V^{\comb}_{\MC}$ has at most $8$ irredundant irreducible components which arise from setting the intersection points: $1, 2, $ and $3$ to be loops in some combination. When points are set to loops, we remove the lines which contain at most two non-loop points, as they do not contribute any new dependencies. The variety of $\MC_{\{1\}}$ appears as an irredundant component of $V^{\comb}_{\MC}$. However $\MC_{\{1,2\}}$ gives rise to a redundant component since its variety is contained the variety of $\MC_{\{1\}}$.
The irreducible decomposition of $V^{\comb}_{\MC}$ has $4$ irredundant components which are: the central component $V_{\MC}$, and the non-central components $V_{\MC_{\{1\}}}, V_{\MC_{\{2\}}}, V_{\MC_{\{3\}}}$.

\begin{figure}
    \centering
    \includegraphics[scale=0.8]{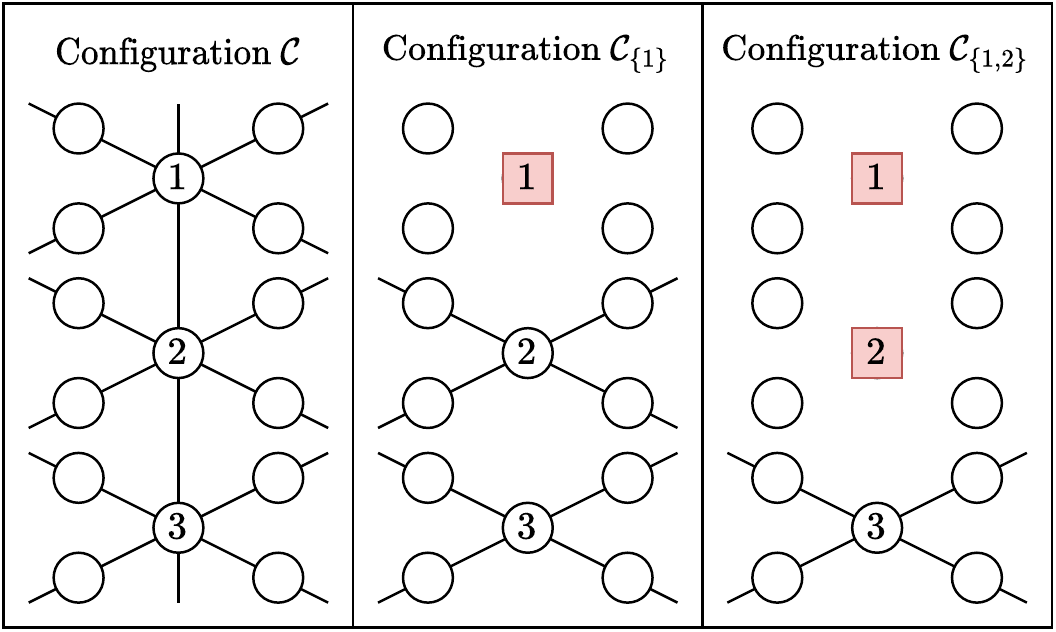}
    \caption{Depiction of $\MC$, $\MC_{\{1\}}$ and $\MC_{\{1,2\}}$ in Example~\ref{example:E_irred_comps}. Shaded squares represent loops of the configuration. 
      }
    \label{fig:E_irred_comps}
\end{figure}
\end{example}

\section{Consecutive forest hypergraphs \texorpdfstring{$\Delta_G$}{DG}}\label{sec:adjtree}

We fix a $d \times n$ matrix $X = (x_{i,j})$  of indeterminates and the polynomial ring $R = \CC[X]$. Throughout this section, we fix a forest $G$ (acyclic graph) with vertices labeled $1, \dots, n$.

\begin{definition}
\label{def:consecutive_forest_hyp}
We define the {\em consecutive forest hypergraph of $G$} as
    \begin{eqnarray*}\label{eq:tree_hypergraph}
    \Delta_{G}=\min
    \left(
    P_2(G)
    \cup \binom{[n]}{4}
    \right)
    \quad\text{where}\quad
    P_2(G)=\{P \subseteq V(G):\ P \text{ is a $2$-path in $G$}\}.
    \end{eqnarray*}
We recall that a $k$-path $P$ in $G$ is a subgraph of $G$ whose vertex set is a non-repeating sequence $p_1, \dots, p_{k+1} \subseteq V(G)$ and whose edge set is $\{\{p_i,p_{i+1}\}:1 \le i \le k\}$. We identify $P$ with its vertex set, and simply write $C \subseteq P$ instead of $C \subseteq V(P)$. If $Q$ is another path, then $P \cup Q$ is the set of all vertices of $G$ lying in $P$ and $Q$.
\end{definition}

\begin{remark}
The hypergraph ideal $I_{\Delta_{G}}$ of the $n$-path $G$ is exactly the  ideal $I_{3n}(3)$ of adjacent minors studied in \cite{HSS}. We note that our construction generalises this particular family by allowing $G$ to be an arbitrary graph. In \cite{HSS}, the ideals $I_{mn}(m)$ are studied for general $m \ge 3$ which can be thought of as the consecutive forest hypergraph of the $n$-path except with higher order minors.
These ideals can be studied using positroid varieties as described in \cite{knutson2013positroid}. It is not too difficult to show that each of the prime components of $I_{mn}(m)$ is a positroid  variety. For instance, one can use the characterisation of positroids via their excluded minors \cite[Theorem~16]{Oh2009Combinatorics} or the decomposition of the Grassmannian into positroid varieties \cite[Section~5.2]{knutson2013positroid}. In particular, we note that the defining ideals of positroid varieties are generated by minors \cite[Theorem~5.15]{knutson2013positroid}. However, this is no longer true for the  consecutive forest hypergraph varieties. More precisely, the varieties arising in the irreducible decomposition of $V_{\Delta_G}$ are, in general, not positroid varieties; for instance see Example~\ref{example: non-positroid component E}.
\end{remark}

\subsection{Minimal matroids.} 
\label{sec:prime-collections-for-m=3}

In this section, we will define so-called prime collections which are collections of subsets of vertices of $G$. To each prime collection $\MS$ we will associate a unique matroid $M_{\MS}$.
We will then see that such matroids are realizable over the real numbers and have irreducible realization spaces. Moreover, we prove that such matroids appear as minimal matroids for consecutive forest hypergraphs.

\begin{definition}
\label{def:prime-singletons}
Let $\MS$ be a collection of singleton subsets of $[n]$. We say $\MS$ is a \textit{prime collection of singletons} for $G$ if $\MS$ satisfies the following inductive definition.
\begin{itemize}
    \item The empty set $\MS = \emptyset$ is a prime collection of singletons.
    \item If $|\MS| \ge 1$, then $\MS = \{\{s_1\}, \dots, \{s_t\} \}$ is a prime collection of singletons if, for each natural number $i$ with $1 \le i \le t$,
    $\MS \backslash \{ s_i\}$ is a prime collection of singletons and  $s_i$ satisfies the following two rules:
    \begin{itemize}
        \item[1.] $s_i$ is not a leaf or isolated vertex of $G'$,
        \item[2.] If $s_i$ is adjacent to a leaf of $G'$ then it has degree at least $3$ in $G'$,
    \end{itemize}
    where $G'$ is the induced subgraph of $G$ obtained by deleting the vertices in the set $\{s_1, \dots, s_{i-1},s_{i+1}, \dots, s_{t} \}$.
\end{itemize}
\end{definition}
\noindent 
For ease of notation and when it is clear, we denote a collection of singletons $\MS = \{\{ s_1\}, \dots, \{s_t \} \}$ as $\{s_1, \dots, s_t \}$. 

\begin{definition}
\label{def:prime_collection}
Let $\MS$ be a collection of singleton subsets ${\rm sing}(\MS)$ and $2$-subsets of $V(G)$. Let $G'$ be the induced subgraph of $G$ obtained by removing all vertices  in ${\rm sing}(\MS)$. 
\begin{itemize}
\item We say $\MS$ is a \textit{prime collection} for $G$ if:
\begin{itemize}
    \item The ${\rm sing}(\MS)$ is a prime collection of singletons for $G$.
     \item  The $2$-subsets in $\MS$ are a subset of edges of $G'$ such that for every vertex $v \in V(G')$ which is incident to an edge in $\MS$, there exists an edge $\{v, w \}$ of $G'$ which is not in $\MS$. 
\end{itemize}
\item We say $\MS$ is a \textit{valid collection} for $G$ if each $2$-subset is disjoint from each singleton set.
\end{itemize}
\end{definition}

Note that every prime collection is a valid collection, but the converse is not true.

\begin{example}\label{example:prime_collection}
Consider the graph $G_1$ in Figure~\ref{fig:example_prime_collection}. Let $\MS_1 = \{1, 23, 24, 45 \}$. We see that the singletons, i.e.~$\{1 \}$, form a prime collection of singletons for $G_1$ because $1$ is adjacent to a leaf of $G_1$ and has degree at least three.
Then we consider the induced subgraph $G'$ of $G_1$ obtained by removing the vertex $1$. The vertices which lie in some $2$-subset in $\MS$ are shaded yellow and vertices in $G'$ which do not lie in any $2$-subset are white. Since every yellow vertex is adjacent to a white vertex, we have that the $2$-subsets form a prime collection of $2$-subsets for $G'$. Therefore, $\MS_2$ is a prime collection for $G_1$.

On the other hand, in the graph $G_2$, the set $\MS_2 = \{1,2,34,45,67\}$ is a valid collection since each $2$-subset in $\MS_2$ does not contain any of the singleton subsets. However $\MS$ is not a prime collection because $1$ is a leaf and so the singletons do not form a prime collection of singletons. Additionally $45 \in \MS$ is not an edge of $G_2$.

\begin{figure}
    \centering
    \includegraphics[scale=0.8]{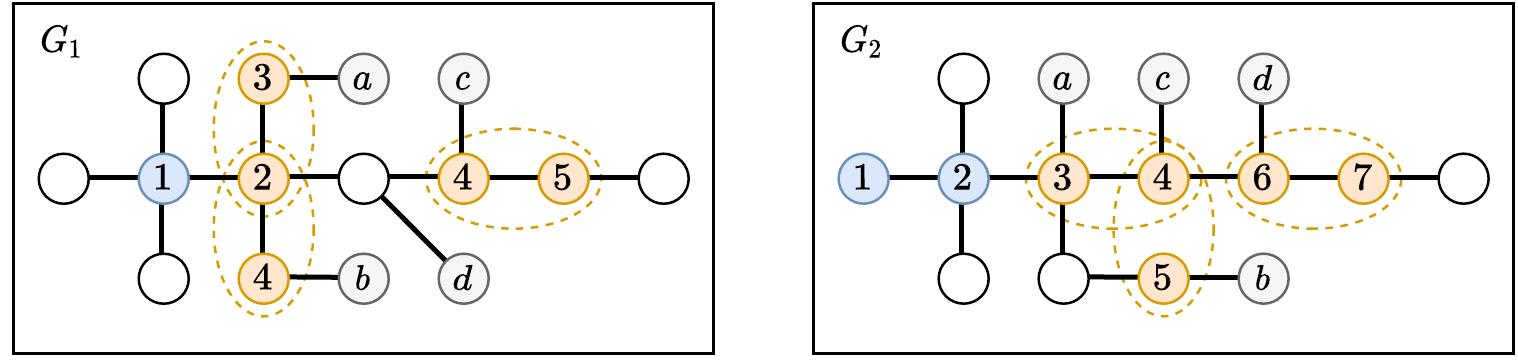}
    \caption{Graphs in Examples~\ref{example:prime_collection} and \ref{example:clouds}. (Left) $G_1$ with prime collection $\MS_1 = \{1, 23, 24, 45\}$ and clouds $234$ and $45$. 
    (Right) $G_2$ has a valid collection $\MS_2 = \{1, 2, 34, 45, 67 \}$ which is not prime. The clouds are $345$ and $67$. 
    }
    \label{fig:example_prime_collection}
\end{figure}

\end{example}

In the following, we show how a valid collection for $G$  can be extended to the set of circuits of a dependent matroid for the consecutive forest hypergraph $\Delta_G$.

\begin{definition}
\label{def:cloud and crossing}
Let $\MS$ be a valid collection.
\begin{itemize}
    \item Let $G(\MS)$ be the graph whose vertex set is given by the union of all $2$-subsets of $\MS$ and whose edges are the $2$-subsets of $\MS$. A \emph{cloud of $\MS$} is the set of vertices of a connected component of $G(\MS)$. 
    
    \item We say that a path $P: v_0,v_1, \dots, v_t$ 
     in $G$ 
    \emph{crosses} the cloud containing $v_j$ and $v_k$ if there exists $i < j < k < \ell$ such that some cloud of $\MS$ contains $\{v_j, v_k\}$ but no cloud of $\MS$ contains either $\{v_i, v_j\}$ or $\{v_k, v_{\ell}\}$. (This may mean that $v_i$ or $v_\ell$ does not lie in any cloud.)

    \item We say that the subset $A\subset V(G)$ is \emph{blocked} by $\MS$ if there exist $v, w \in A$ such that one of the following conditions holds:
    \begin{itemize}
        \item[(i)] The vertices $v$ and $w$ lie in different connected components of $G$.
        \item[(ii)] The vertices $v$ and $w$ lie in the same connected component of $G$ and the unique path from $v$ to $w$ in $G$ crosses a cloud or contains a vertex $i\in V(G)\backslash\{v,w\}$ such that $\{i\}$ is a singleton in $\MS$.
    \end{itemize} 
    
\end{itemize}
\end{definition}

We note that if a set $A \subseteq V(G)$ is not blocked by some valid set $\MS$ then it follows that $A$ is contained within a connected component of $G$.

\begin{example}\label{example:clouds}
In Figure~\ref{fig:example_prime_collection}, we have illustrated a prime collection $\MS_1$ for the graph $G_1$ which contains two clouds $234$ and $45$. The path from $a$ to $b$ crosses the cloud $234$ whereas the path from $c$ to $d$ does not cross any cloud. Therefore any set of vertices containing $a$ and $b$ is blocked by $\MS$, but  
$\{4, c, d \}$ is not blocked by $\MS$.

We also illustrate a non-prime valid collection $\MS_2$ for the graph $G_2$ which contains two clouds. The path from $a$ to $b$ crosses the cloud $345$ since the path passes through both $3$ and $5$. However the path from $c$ to $d$ does not cross any clouds of $\MS_2$
\end{example}

We now give the construction of a matroid whose circuits include $\MS$ for a given valid collection $\MS$.

\begin{proposition}
\label{pro:MS}
Let $\MS$ be a valid collection. Let $\MC$ be the collection of subsets of $[n]$ consisting of the
\begin{enumerate}
    \item singletons in $\MS$,
    \item $2$-subsets of clouds of $\MS$,
    \item $3$-subsets of $[n]$ which are not blocked and do not contain any set in $\MS$, and
    \item $4$-subsets of $[n]$ containing none of the sets listed in 1, 2, or 3 above. 
\end{enumerate} Then $\MC$ is the collection of circuits of a matroid.
\end{proposition}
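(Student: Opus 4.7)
The plan is to verify that $\MC$ satisfies the three circuit axioms of a matroid: (C1) $\emptyset\notin\MC$, which is immediate since every set described in (1)--(4) has size at least $1$; (C2) that $\MC$ forms an antichain under inclusion; and (C3) the circuit elimination axiom. The two key structural inputs throughout will be that the clouds partition their vertex set, being the connected components of $G(\MS)$, and that $G$ being a forest provides a unique path between any two vertices of the same component, on which the blocked/crossing predicate of Definition~\ref{def:cloud and crossing} is tested.

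For (C2), I will rule out each possible proper containment between the four types. A Type~(1) singleton $\{s\}$ cannot sit inside a Type~(2) pair because validity of $\MS$ forces $s$ to be disjoint from every $2$-subset of $\MS$, and hence from the vertex set of every cloud. Containments of a Type~(1), (2), or (3) set into a Type~(4) $4$-subset are excluded directly by the definition of (4), and containment of a Type~(1) singleton in a Type~(3) triple is forbidden by the clause ``does not contain any set in $\MS$''. The delicate case is a Type~(2) pair $\{a,b\}$ sitting inside a Type~(3) triple $A=\{a,b,c\}$; here I will argue that the third vertex $c$, together with the unique path in $G$ between $c$ and the cloud containing $a$ and $b$, creates either a cloud-crossing or forces the pair $\{a,b\}$ to itself be a set in $\MS$, so that $A$ cannot satisfy both conditions defining Type~(3).

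For (C3), I take distinct $C_1,C_2\in\MC$ with $e\in C_1\cap C_2$ and search for a circuit $C_3\in\MC$ inside $(C_1\cup C_2)\setminus\{e\}$. Singleton-involving cases are excluded by (C2). If $C_1=\{e,a\}$ and $C_2=\{e,b\}$ are both Type~(2), then since distinct clouds are disjoint, $a$ and $b$ both lie in the cloud of $e$, so $\{a,b\}$ is a Type~(2) circuit and serves as $C_3$. The remaining cases, where one or both $C_i$ is a $3$- or $4$-subset, I will handle by a descent argument on subset sizes of $(C_1\cup C_2)\setminus\{e\}$: a $k$-subset for $k=3$ or $4$ that contains no smaller set from $\MC$ is itself in $\MC$, so it suffices to locate such a subset, which one does using the blocking condition together with the forest structure of $G$.

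The main obstacles I anticipate are the Type~(2)-in-Type~(3) sub-case of (C2) and the Type~(3)-versus-Type~(3) sub-case of (C3). In both, the work is to translate the graph-theoretic blocked/crossing predicate into concrete statements about unique paths in $G$; this is largely a case analysis but is notationally heavy, especially when the paths under consideration intersect multiple clouds or pass near singleton vertices of $\MS$. The overall strategy is to exploit the partition structure of clouds and the acyclicity of $G$ repeatedly to funnel each case down to one of a small number of standard configurations.
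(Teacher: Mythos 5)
Your strategy of running through the three circuit axioms is compatible with what the paper does (the paper only explicitly verifies circuit elimination, treating the others as immediate), and your handling of the easy cases of elimination is fine. However, the pivotal reduction you invoke for the hard cases contains a genuine error.

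You claim that ``a $k$-subset for $k=3$ or $4$ that contains no smaller set from $\MC$ is itself in $\MC$.'' For $k=4$ this is true by the very definition of Type~(4), but for $k=3$ it is false. A $3$-subset that contains no singleton of $\MS$ and no $2$-subset of a cloud is \emph{not} automatically a Type~(3) circuit: it must additionally be \emph{unblocked}. A $3$-subset whose vertices lie in different components of $G$, or whose internal path crosses a cloud, contains no smaller member of $\MC$ and is also not in $\MC$; it is simply an independent set of rank $3$. So the dichotomy your descent argument depends on --- ``either it is a circuit or it contains a smaller circuit'' --- does not hold at size $3$, and the argument cannot close.

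The content you defer with the phrase ``which one does using the blocking condition together with the forest structure of $G$'' is exactly where the entire substance of the proof lies. The paper handles the case $|C_1|=2$, $|C_2|=3$ (with $C_1=\{v,x\}$, $C_2=\{x,y,z\}$) by examining the unique tree paths $P_1$ from $x$ to $y$, $P_2$ from $v$ to $y$, $P_3$ from $v$ to $x$; using that $y$ is not in the cloud of $v,x$, it deduces that $P_3\subseteq P_1$, $P_3\subseteq P_2$, or $P_3=P_1\cap P_2$, and then propagates unblockedness of $\{x,y\}$ to $\{v,y\}$ (and similarly for $\{v,z\}$), concluding that $\{v,y,z\}$ is unblocked. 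The case $|C_1|=|C_2|=3$ with $|C_1\cap C_2|=2$ is handled by a contradiction argument showing the new pair $\{v,w\}$ cannot be blocked without blocking one of $\{v,x\},\{w,x\},\{v,y\},\{w,y\}$ or forcing $x,y$ into the same cloud. You correctly anticipate that ``the work is to translate the graph-theoretic blocked/crossing predicate into concrete statements about unique paths,'' but you do not perform that translation; and because your claimed key lemma is false at $k=3$, the framework you set up does not reduce the problem to a tractable search. You would need to supply the actual path analysis to complete the proof.

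Separately, your extra attention to the antichain axiom (C2), which the paper omits as evident, is a reasonable instinct --- the Type~(2)-inside-Type~(3) containment does require the same geodesic-convexity properties of clouds that underlie the paper's path arguments --- but, as with (C3), you only sketch where the argument should go.
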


\begin{proof}
We need to check that $\MC$ satisfies the circuit elimination axiom. That is, if $C_1,C_2\in\MC$ and $x\in C_1\cap C_2$, we must check that there is a set in $\MC$ contained in $(C_1\cup C_2)\backslash\{x\}$. This is clear if either $C_1$ or $C_2$ is a set of size $1$ or $4$. If $C_1$ and $C_2$ both have size $2$, then $C_1$ and $C_2$ are contained in the same cloud. Thus, $(C_1\cup C_2)\backslash\{x\}$ is contained in the same cloud, implying that $(C_1\cup C_2)\backslash\{x\}\in\MC$.

Suppose $|C_1|=2$ and $|C_2|=3$. Let $C_1=\{v,x\}$ and $C_2=\{x,y,z\}$.

Since $C_2$ is not blocked, it is contained in a connected component of $G$.

Let $P_1$ be the path from $x$ to $y$; let $P_2$ be the path from $v$ to $y$; and let $P_3$ be the path from $v$ to $x$. Since $\{x,y,z\}\in\MC$, we have $\{x,y\}\notin\MC$. Therefore, $y$ is not contained in the same cloud that contains $v$ and $x$. Since a cloud contains every vertex on the path between two vertices in the cloud, we see then that either $P_3\subseteq P_1$, or $P_3\subseteq P_2$, or $P_3=P_1\cap P_2$. In either case, the fact that $\{x,y\}$ is not blocked implies that $\{v,y\}$ is not blocked. Similarly, $\{v,z\}$ is not blocked. We already know that $\{y,z\}$ is not blocked because $\{y,z\}\subseteq\{x,y,z\}$. Therefore, we conclude that $\{v,y,z\}$ is not blocked and contains a member of $\MC$.

Finally, we consider the case where $|C_1|=|C_2|=3$. If $|C_1\cap C_2|=1$, then $(C_1\cup C_2)\backslash\{x\}$ has size $4$ and therefore contains a member of $\MC$. If $|C_1\cap C_2|=2$, let $C_1=\{v,x,y\}$ and $C_2=\{w,x,y\}$. We will show that $(C_1\cup C_2)\backslash\{x\}=\{v,w,y\}$ contains a member of $\MC$. Since $\{v,x,y\}$ and $\{w,x,y\}$ are not blocked, the pairs $\{v,y\}$ and $\{w,y\}$ are not blocked. Suppose for a contradiction that the pair $\{v,w\}$ is blocked. Recalling that there is a unique path between any pair of vertices in a connected component of a forest, one can see that either $\{v,x\}$ or $\{w,x\}$ is blocked, a contradiction, or $x$ is contained in the cloud that blocks $\{v,w\}$. So we deduce that $x$ is contained in the cloud that blocks $\{v,w\}$. But then, either $\{v,y\}$ or $\{w,y\}$ is blocked, or $x$ and $y$ are contained in the same cloud, a contradiction.
\end{proof}
\begin{notation} 
Given a valid collection $\MS$, we denote by $M_{\MS}$ its corresponding matroid from Proposition~\ref{pro:MS}.
\end{notation}

\begin{remark}[Flats of $M_\MS$]\label{rem:flats}
The set of loops $L$ of $M_{\MS}$ is the set of singletons in $\MS$. 
Note that the loops of a matroid are contained in every flat of the matroid. The flats of $M_{\MS}$ of rank $1$ are either of the form $B\cup L$, where $B$ is a cloud, or of the form $\{x\}\cup L$, where $x \in [n]$ is a non-loop element contained in no cloud. The flats of rank $2$ are either of the form $F\cup L$, where $F$ is an inclusion-wise maximal set that is not blocked, or of the form $F_1\cup F_2$, where $F_1$ and $F_2$ are flats of rank $1$ such that $\{x_1,x_2\}$ is blocked for every pair $\{x_1,x_2\}\in F_1\cup F_2$. Since, all $4$-subsets of $[n]$ are dependent in $M_{\MS}$, the matroid has rank at most $3$.
\end{remark}

We now show that the matroids $M_\MS$ associated to prime collections $\MS$ in Definition~\ref{def:prime_collection} are precisely the minimal matroids for $\Delta_G$. 
Firstly, we observe that the simplification of $M_\MS$ is the matroid associated to a forest-like configuration.

\begin{proposition}\label{prop: MS is forestlike for prime S}
Let $\MS$ be a prime collection. Then the simplification of $M_\MS$ is the matroid of a forest-like configuration.
\end{proposition}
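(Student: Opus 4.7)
The plan is to realize the simplification $\tilde M_\MS$ as the matroid of an explicit point–line configuration built from two auxiliary forests, and then verify forest-likeness by a Steiner-tree argument.

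First I would introduce two auxiliary forests. Let $\bar G$ be obtained from $G$ by deleting every singleton of $\MS$ and contracting each cloud $C_i$ to a single vertex $c_i$. Because $G$ is a forest, any edge of $G$ with both endpoints in one cloud must already lie in $\MS$ (else $G$ would contain a cycle), so each $C_i$ is a subtree of $G$ and $\bar G$ is a simple forest whose vertex set coincides with the ground set of $\tilde M_\MS$. Each edge of $\bar G$ incident to $c_i$ lifts to a unique edge of $G$, and I would color it by its endpoint in $C_i$; the primality hypothesis guarantees that every vertex of $C_i$ occurs as a color. Now form the split forest $\tilde G$ by replacing each $c_i$ with one copy $c_{i,\kappa}$ per color $\kappa$ and redirecting the edges of $\bar G$ accordingly, so that the edges of $\tilde G$ are in natural bijection with those of $\bar G$.

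The key correspondence to establish is that a subset of the ground set is non-blocked in the sense of Definition~\ref{def:cloud and crossing} if and only if, after selecting one split copy of each cloud vertex it contains, it fits inside a single connected component of $\tilde G$. The essential point is that a path in $G$ crosses a cloud $C_i$ precisely when it enters $C_i$ at one vertex and exits at a different vertex, i.e.\ when its lift would join two distinct split copies of $c_i$. Since $\bar G$ is a forest, each component of $\tilde G$ contains at most one split copy of each $c_i$, so maximal non-blocked sets biject with components of $\tilde G$, and the lines of $\tilde M_\MS$ are the images in $V(\bar G)$ of components of $\tilde G$ of size at least three.

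It remains to show this configuration is forest-like, equivalently that the bipartite incidence graph between its lines and its multiply-incident points has no cycle. Non-cloud vertices appear in a unique component of $\tilde G$, so every multiply-incident point must be a cloud vertex. Suppose for contradiction a cycle $L_1 - c_{i_1} - L_2 - \cdots - L_m - c_{i_m} - L_1$ existed with distinct cloud vertices. Each line-subtree $\bar L_j \subseteq \bar G$ would contain two consecutive $c_{i_j}$'s and hence the unique $\bar G$-path between them; concatenating these $m$ paths produces a closed walk in the Steiner tree $T$ of $\{c_{i_1},\ldots,c_{i_m}\}$ in $\bar G$. A cyclic parity count on $T$ (removing any edge splits the $c_{i_j}$'s into two nonempty groups, forcing an even, positive number of transitions as one cycles through the indices) forces each edge of $T$ to lie in at least two of the $\bar L_j$. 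But the bijection between edges of $\bar G$ and edges of $\tilde G$ sends each edge of $\bar G$ to an edge belonging to only one component of $\tilde G$, so each edge of $\bar G$ lies in only one $\bar L_j$. Thus $T$ must have no edges, contradicting the distinctness of the $c_{i_j}$.

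The principal obstacle is the correspondence in the middle paragraph: translating the four-index crossing condition of Definition~\ref{def:cloud and crossing} into the clean component-of-$\tilde G$ statement is where the prime-collection hypothesis is used in a genuine way, in particular to ensure that every color at each cloud vertex is realized by an external edge of $G$ and that clouds induce subtrees of $G$ on which the splitting operation is well behaved. Once the correspondence is in place, the final Steiner-tree/parity argument is a fairly direct consequence of the forest structure of $\bar G$ and $\tilde G$.
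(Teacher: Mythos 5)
Your proof is essentially correct and takes a genuinely different route from the paper. The paper argues by induction on $|\MS|$: it peels off one singleton or one edge of $\MS$ at a time (using the recursive structure built into Definition~\ref{def:prime-singletons} and Definition~\ref{def:prime_collection}), tracks precisely how the dependent rank-$2$ flats of the simplification change under each removal (splitting one line into several disjoint lines for a removed singleton, or into several lines concurrent at the cloud point for a removed edge), and concludes forest-likeness by the inductive hypothesis. You instead give a closed-form global description of the configuration: the contracted forest $\bar G$, the color-split forest $\tilde G$, the identification of lines with (images of) connected components of $\tilde G$, and then a Steiner-tree parity argument showing the line-point incidence structure cannot contain a cycle. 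Your approach has the advantage of making the configuration completely explicit in one shot, and the edge-counting argument (each edge of $\bar G$ lies in exactly one $\bar L_j$ because edges of $\bar G$ and $\tilde G$ are in bijection, while components of $\tilde G$ are disjoint) is clean and transparent; the paper's approach avoids having to state and verify the middle "correspondence" but pays for it with more delicate bookkeeping about how flats transform under removal of a singleton or edge.

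Two small points worth tightening. First, the characterization of "crossing" in Definition~\ref{def:cloud and crossing} requires not only that the path enter and exit the cloud at different vertices but also that it have at least one vertex strictly before the entry and strictly after the exit; in particular a path that begins or ends inside a cloud never crosses it. This is exactly what makes the phrase "after selecting one split copy of each cloud vertex it contains" do real work (the split copy for an endpoint-cloud is the exit color, not a canonical copy), so it deserves to be spelled out when you prove the correspondence. Second, the correspondence itself — that maximal non-blocked sets biject with components of $\tilde G$, and that the preimage-independence of "blocked" makes this well defined on $V(\bar G)$ — is genuinely the crux, as you note; your proposal sketches it plausibly and the key facts (clouds induce subtrees of $G$, trails in a forest are paths, a component of $\tilde G$ contains at most one copy of each $c_i$, and primality guarantees every cloud vertex is a color and hence no isolated split copy) are all correct, but they should be written out carefully rather than left as a remark.
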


\begin{proof}
By definition, the simplification $(M_\MS)^s$ is the matroid obtained from $M_\MS$ by deleting all its loops and deleting elements from parallel classes such that each parallel class contains only one element. Since $M_\MS$ has rank at most $3$, it follows that $(M_\MS)^s$ is completely determined by its dependent rank-$2$ flats (that is, the flats of rank $2$ with at least $3$ elements). We define the point and line configuration $\MC$ whose points are the elements of the ground set of $(M_\MS)^s$ and whose lines are the dependent rank-$2$ flats of $(M_\MS)^s$. Clearly, we have that the matroid associated to $\MC$ is equal to $(M_\MS)^s$. It remains to show that $\MC$ is forest-like.

We proceed by induction on $|\MS|$. If $\MS = \emptyset$, then we have that the matroid associated to $\MC$ is equal to $M_\MS$. For any labelling of the points in $\MC$, we show that $G_\MC$ is a disjoint union of paths. By Definition~\ref{def:cloud and crossing}, the $3$-subsets of $V(G)$ contained in a connected component of $G$ are exactly those which are not blocked by $\MS$. So, by the definition of the matroid $M_\MS$, its dependent rank-$2$ flats are the connected components of $G$. It follows immediately from the definition of the graph $G_\MC$ that its connected components are paths. Hence $\MC$ is forest-like.

Assume that $|\MS| \ge 1$. Then, either $\MS$ contains a singleton of $V(G)$ or an edge of $G$.

\noindent \textbf{Case 1.} Let $\{v\} \in \MS$ be a singleton. By definition, we have that $\MS \backslash \{v\}$ is a prime collection. By induction, the simplification $(M_{\MS \backslash \{v\}})^s$ is the matroid of a forest-like configuration $\MC'$. Since $v$ is not contained in any cloud of $\MS$, it follows that $v$ is contained in a unique dependent rank-$2$ flat $F = \{v, f_1, \dots, f_k \}$ of $(M_{\MS \backslash \{v\}})^s$. 
The dependent rank-$2$ flats of $(M_\MS)^s$ are obtained from the dependent rank-$2$ flats of $(M_{\MS \backslash \{v\}})^s$ by replacing $F$ with a disjoint collection of flats $F_1, \dots, F_d$ which partitions $\{f_1, \dots, f_k\}$. Each flat $F_i$ corresponds to a neighbor of $v$ in $G$. To see this, let $A \subseteq V(G)\backslash v$ be a $3$-subset and suppose that $A$ is not blocked by $\MS\backslash\{v\}$. Then $A$ is not blocked by $\MS$ if and only if for any pair of vertices $x, y \in A$ we have that the path from $x$ to $y$ in $G$ does not pass through $v$. Therefore the point and line configuration $\MC$ is obtained from $\MC'$ by replacing the line containing $v$ with $d$ non-intersecting lines. Since $\MC'$ is forest-like, it follows that $\MC$ is also forest-like.

\smallskip

\noindent \textbf{Case 2.} Let $\{v,w\} \in \MS$ be an edge of $G$. Since $\MS$ is a prime collection, the induced subgraph $G'$ of $G$ on the vertices of the cloud of $\MS$ containing $\{v,w\}$ is connected. By assumption $G$ is a forest, hence $G'$ is forest. So, without loss of generality, we may assume that $v$ is a leaf in $G'$. By the definition of a prime collection, it follows that $\MS \backslash \{v,w \}$ is a prime collection. By induction, the simplification $(M_{\MS\backslash\{v,w\}})^s$ is the matroid of a forest-like configuration $\MC'$. Let $F$ be the unique dependent flat of rank $2$ of $(M_{\MS \backslash \{v,w\}})^s$ that contains $v$. The dependent rank-$2$ flats of $(M_\MS)^s$ are obtained from the dependent rank-$2$ flats of $(M_{\MS \backslash \{v,w\}})^s$ by replacing $F$ with a collection of flats $F_1, \dots, F_d$ such that $F_i \cap F_j = \{p\}$ where $p$ is the element of the ground set of $(M_\MS)^s$ that corresponds to the cloud of $\MS$ containing $v$ and $w$. To see this, let $A \subseteq V(G)$ be a $3$-subset and suppose that $A$ is not blocked by $\MS \backslash \{v,w\}$. Then $A$ is blocked by $\MS$ if and only if there exist $x, y \in A$ such that the path from $x$ to $y$ in $G$ crosses a cloud of $\MS$ via the edge $\{v, w\}$. Therefore, the point and line configuration $\MC'$ is obtained from $\MC$ by removing the line containing $v$ and replacing it with $d$ lines that pass through a common vertex. Since $\MC'$ is forest-like, it follows that $\MC$ is also forest-like.
\end{proof}

We now state and prove our main result, and then the auxiliary results used in this proof.
 
\begin{theorem}\label{thm:m3_globally_min}
The minimal matroids for $\Delta_G$ are 
$
\MM_{G} = \{ M_{\MS} : \MS \text{ is a prime collection}\}.
$
\end{theorem}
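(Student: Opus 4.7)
The plan is to prove both containments: first that every matroid $M_\MS$ with $\MS$ a prime collection is minimally dependent for $\Delta_G$, and second that every minimally dependent matroid for $\Delta_G$ arises this way.

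For the forward inclusion, I would first check $M_\MS \in \MM(\Delta_G)$. Realizability over $\CC$ follows by combining Proposition~\ref{prop: MS is forestlike for prime S}, which shows that the simplification of $M_\MS$ is the matroid of a forest-like configuration, with Proposition~\ref{prop:induct}. To see $\Delta_G \subseteq \MD(M_\MS)$, each $4$-subset is automatically dependent because $M_\MS$ has rank at most $3$; for a $2$-path $v_0 v_1 v_2$, I would case on whether some $v_i$ is a singleton of $\MS$ (then $v_i$ is a loop), whether $\{v_0, v_1\}$ or $\{v_1, v_2\}$ lies in a cloud (giving a $2$-circuit), or otherwise argue that the triple is unblocked and contains no set of $\MS$, hence is a $3$-circuit by Proposition~\ref{pro:MS}.

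For the minimality of $M_\MS$, I would proceed by induction on $|\MS|$, peeling off a singleton or an edge via Definitions~\ref{def:prime-singletons} and~\ref{def:prime_collection}. Supposing $M < M_\MS$ strictly, some circuit $C$ of $M_\MS$ would be independent in $M$. The primality conditions are tailored precisely so that, for each type of circuit (a singleton $\{v\}$, a cloud pair, or an unblocked triple), one can locate a $2$-path of $G$ whose dependence in $M$ would force $C$ back into $\MD(M)$. For example, if $C = \{v\}$ then the requirement that $v$ be non-leaf in the relevant induced subgraph, with degree at least $3$ when adjacent to a leaf, supplies two neighbors $u, w$ of $v$ in $G$ for which the triple $\{u, v, w\}$ cannot remain dependent once $v$ is un-looped, contradicting $\Delta_G \le M$; the cloud and unblocked-triple cases use the non-closure condition on $2$-subsets of $\MS$ analogously.

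For the backward inclusion, given a minimal $M \in \MM(\Delta_G)$, I would set $\MS_L := \{\{v\} : v \text{ is a loop of } M\}$ and, for each non-singleton parallel class $P$ of $M$, select a set of edges of $G$ with endpoints in $P$ whose induced subgraph has $P$ as a connected component; put $\MS := \MS_L \cup \MS_P$. Checking that $\MS$ is prime translates directly to minimality: a loop vertex that were a leaf of the relevant induced subgraph could be un-looped without losing dependence of any $2$-path, and a cloud vertex all of whose $G'$-edges lay in $\MS$ could be split off from its parallel class without loss. Then $M = M_\MS$ follows by comparing circuits, using minimality to force matching of the rank-$2$ dependent flats. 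The main obstacle lies here: one must first justify that the parallel classes of a minimal $M$ can always be spanned by edges of $G$, which requires using the $2$-path structure of $\Delta_G$ to translate minimality into concrete combinatorial conditions on $M$; once this connectivity is secured, the non-closure condition is then a routine minimality check.
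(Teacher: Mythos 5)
Your high-level plan — proving both inclusions and checking $\Delta_G \subseteq \MD(M_\MS)$ directly — matches the paper, but the crucial engine of the paper's proof is absent, and both of your directions have gaps because of it. The missing ingredient is Lemma~\ref{lem:m3_locally_min} (constrained minimal matroids): for \emph{any} matroid $M$ with $\Delta_G \subseteq \MD(M)$, if $\MT$ is the set of circuits of $M$ of size one and two, then $M_\MT \le M$. This is what reduces the entire comparison to the finite family $\{M_\MS : \MS \text{ a valid collection}\}$.

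For the forward direction your proposed contradiction does not go through. If $M$ is a matroid with $\Delta_G \subseteq \MD(M) \subsetneq \MD(M_\MS)$ and $C = \{v\}$ is a loop of $M_\MS$ that is not a loop of $M$, there is no contradiction forced by a single $2$-path: $M$ may keep every $2$-path through $v$ dependent by a different mechanism (e.g.\ making $v$'s neighbors parallel to each other, or a coloop $v$ with parallel neighbors). The argument cannot be run one circuit at a time; $M$'s small circuits may be an entirely different valid collection $\MT$. The paper handles this by first passing from $M$ to $M_\MT$ via Lemma~\ref{lem:m3_locally_min} and then proving separately (Lemma~\ref{lem:m3_MS<MT_implies_MS=MT}) that if $\MS$ and $\MT$ are both prime with $M_\MT \le M_\MS$ then $\MS = \MT$. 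You do not have either of these steps, and the "locate a $2$-path that forces $C$ back" claim is not sound as stated.

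For the backward direction, your construction of $\MS$ from loops plus a choice of $G$-edges spanning each parallel class, and the need to justify that parallel classes are spanned by $G$-edges, is exactly the gap you yourself flag — and it is not routine. The paper sidesteps it: it takes $\MS$ to be \emph{all} circuits of size one and two of $M$ (a valid collection regardless of adjacency), applies Lemma~\ref{lem:m3_locally_min} to get $M_\MS \le M$, and uses minimality of $M$ to force $M = M_\MS$. Only afterwards does it prove $\MS$ is prime, via Lemma~\ref{lem:m3_globally_min_loops} (loops form a prime collection of singletons, proved by exhibiting a strictly smaller matroid) together with a modification argument for the $2$-subsets. You would need to reconstruct these lemmas to fill the gaps; as written the proposal does not constitute a proof.
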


\begin{proof}
On the one hand, if we take a minimal matroid $M$ for $\Delta_G$, then by Lemma~\ref{lem:m3_globally_min_implies_MS} we have that $M = M_\MS$ for some prime collection $\MS$.
On the other hand, let $\MS$ be a prime collection. First, we observe that, for every prime collection $\MS$, every member of $\Delta_G$ is indeed a dependent set in $M_\MS$. More precisely, $\Delta_G\subseteq\MD(M_\MS)$ because every path in $G$ with three vertices either contains a singleton set in $\MS$, contains two elements from a cloud of $\MS$, or is not blocked. Since $\Delta_G\subseteq\MD(M_\MS)$, Lemma~\ref{lem:m3_globally_min_implies_MS} implies that there is a prime collection $\MT$ such that $\Delta_G\subseteq\MD(M_\MT) \subseteq \MD(M_\MS)$ and such that $M_\MT$ is a minimal matroid for $\Delta_G$. But Lemma~\ref{lem:m3_MS<MT_implies_MS=MT} implies that $\MS = \MT$. Therefore, $M_\MS$ is a minimal matroid for $\Delta_G$.
\end{proof}

\begin{example}\label{example:adjacent_tree_decomposition}
Let $G$ be the graph on vertex set $[7]$ with edges $E(G) = \{12, 23, 34, 45, 56, 47 \}$. The consecutive forest hypergraph for $G$ is given by $\Delta = \{123, 234, 345, 347, 456, 457\}$. In Figure~\ref{fig:adjacent_tree_decomposition_example}, we write down the prime collections $\MS$ for $G$. For each $\MS$, the matroid $M_{\MS}$ is the matroid of a point and line configuration which we illustrate in the figure. For instance if $\MS = \emptyset$, then $M_{\MS}$ is the matroid of the point and line configuration with all seven points lying on a single line. If $\MS = \{34, 45 \}$, then the corresponding configuration 
has a line containing the points $1,2,3,4,5$ and two free points $6$ and $7$. The three points $3,4,5$ coincide since they are a cloud of $\MS$.
\begin{figure}
    \centering
    \includegraphics[width=\textwidth]{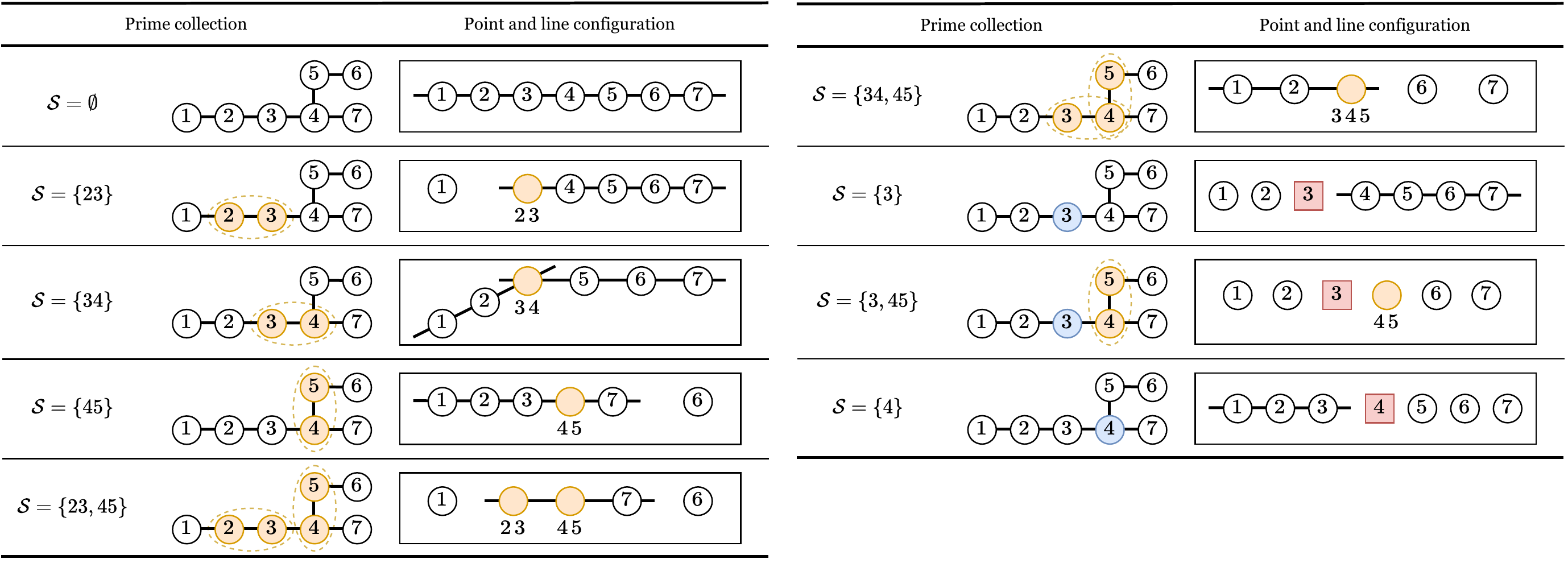}
    \caption{Prime collections $\MS$ and their matroid $M_{\MS}$ as a point and line configuration from Example~\ref{example:adjacent_tree_decomposition}. The dotted pairs of vertices are the two subsets of $\MS$ and the square points in the configurations represent loops.}
    \label{fig:adjacent_tree_decomposition_example}
\end{figure}
\end{example}

We devote the rest of this subsection to prove the lemmas used in the proof of Theorem~\ref{thm:m3_globally_min}. We first prove a matroid result that we will need later.

\begin{lemma}
\label{lem:dep-set-elim}
Let $M$ be a matroid, and let $\{a,b,c,d\}$ be a subset of the ground set of $M$ such that $\{a,d\}$ is an independent set. If $\{a,b,d\}$ and $\{a,c,d\}$ are dependent sets, then so is $\{a,b,c\}$.
\end{lemma}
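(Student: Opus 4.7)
The plan is to prove this via a short rank/closure argument, which gives the cleanest line of reasoning. I will phrase the conclusion in terms of rank functions and closures, then deduce dependence of $\{a,b,c\}$ purely from a cardinality-versus-rank comparison.

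First I would observe that since $\{a,d\}$ is independent, we have $\rank(\{a,d\}) = 2$. Next, because $\{a,b,d\}$ is dependent and contains the independent set $\{a,d\}$, the rank cannot go up, so $\rank(\{a,b,d\}) = 2$. This is equivalent to saying $b \in \mathrm{cl}(\{a,d\})$. The identical reasoning applied to $\{a,c,d\}$ gives $c \in \mathrm{cl}(\{a,d\})$.

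Therefore $\{a,b,c\} \subseteq \mathrm{cl}(\{a,d\})$, and since closures preserve rank we get $\rank(\{a,b,c\}) \le \rank(\mathrm{cl}(\{a,d\})) = 2 < 3 = |\{a,b,c\}|$. Consequently $\{a,b,c\}$ is dependent.

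There are no real obstacles here; the only subtlety is ensuring that $\{a,d\}$ being independent rules out the degenerate situations (e.g.\ $a$ or $d$ being a loop, or $a$ and $d$ being parallel) that would otherwise complicate a direct circuit-based argument. A fully combinatorial proof via circuit elimination is also possible: one checks that any circuit $C_1 \subseteq \{a,b,d\}$ and $C_2 \subseteq \{a,c,d\}$ either already lies inside $\{a,b,c\}$, or else both contain $d$, in which case $(C_1 \cup C_2) \setminus \{d\} \subseteq \{a,b,c\}$ contains a circuit by the circuit elimination axiom. I prefer the rank-based proof for its brevity.
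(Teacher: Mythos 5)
Your proof is correct and takes essentially the same rank/closure route as the paper; the only cosmetic difference is that you symmetrically place both $b$ and $c$ in $\mathrm{cl}(\{a,d\})$ and bound $\rank(\{a,b,c\})$ directly, whereas the paper instead bounds $\rank(\{a,b,c,d\})$ by putting $c$ into $\mathrm{cl}(\{a,b,d\})$.
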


\begin{proof}
Since $\{a,b,d\}$ is dependent, the rank of $\{a,b,d\}$ is at most $2$. Since $\{a,c,d\}$ is dependent and $\{a,d\}$ is independent, $c$ is in the closure of $\{a,d\}$. Therefore, $c$ is in the closure of $\{a,b,d\}$. Since the rank of $\{a,b,d\}$ is at most $2$ and $c$ is in the closure of $\{a,b,d\}$, it follows that the rank of $\{a,b,c,d\}$ is at most $2$. Therefore, $\{a,b,c\}$ is a dependent set.
\end{proof}

Now we consider the loops of a minimal matroid and give an algorithmic formulation of 
Definition~\ref{def:prime-singletons}. 
Given a collection $\MS$ of singletons, if $\MS$ \textit{passes} the following algorithm,  it is a good candidate to be a prime collection of singletons for $G$.
While every prime collection of singletons will pass Algorithm~\ref{alg:prime_check}, not all sets which pass the algorithm are prime collections of singletons. In order to guarantee that a set $\MS$ is a prime collection of singletons, we require that $\MS$ passes regardless of the ordering of its elements.

\medskip
\begin{algorithm}[H]
    \DontPrintSemicolon
    \KwIn{$\MS = (s_1, \dots, s_t)$ an ordered list of vertices of $G$}
    \KwOut{{\bf pass} or {\bf fail}. (If fail then $\MS$ is not a prime collection of singletons for $G$.)}
    
    \textbf{Initialize:}
    $H \leftarrow G$\;
    \For{$i \leftarrow 1$ \KwTo $t$}{
    \If{$s_i$ is a leaf or isolated vertex of $H$}{\Return {\bf fail}}
    \If{$s_i$ is adjacent to a leaf of $H$ {\bf and} $s_i$ has degree $2$ in $H$}{\Return {\bf fail}}
    $H \leftarrow$ induced subgraph of $H$ obtained by deleting vertex $s_i$
    }
    \Return {\bf pass}
    \caption{Prime collection test}
    \label{alg:prime_check}
\end{algorithm}
\medskip

\begin{proposition}\label{prop:algorithm_prim_singletons}
Let $\MS = \{s_1, \dots, s_t \}$ be a collection of singleton sets of vertices of $G$. Then $\MS$ is a prime collection of singletons for $G$ if and only if $\MS$ passes Algorithm~\ref{alg:prime_check} for any ordering of its elements.
\end{proposition}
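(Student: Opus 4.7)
The plan is to proceed by induction on $t = |\MS|$, with $t = 0$ giving a trivial base case. The pivotal observation is that if an element $s \in \MS$ is placed at the last position of an ordering, then at step $t$ of the algorithm the current graph $H$ coincides exactly with $G' = G - (\MS \setminus \{s\})$ from Definition~\ref{def:prime-singletons}, so the algorithm's pass conditions at that step become literally rules 1 and 2 applied to $s$ in $G'$.

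For the direction ($\Leftarrow$), assume $\MS$ passes the algorithm for every ordering. Using the observation above, every $s \in \MS$ satisfies rules 1 and 2 relative to $G - (\MS \setminus \{s\})$. To see that $\MS \setminus \{s\}$ is itself a prime collection, I would take any ordering $\tau$ of $\MS \setminus \{s\}$, append $s$ to obtain an ordering of $\MS$, observe that the first $t-1$ steps of the extended run agree step by step with the algorithm on $\tau$, and invoke the inductive hypothesis. These two facts verify the recursive definition.

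For the converse ($\Rightarrow$), assume $\MS$ is prime, fix an arbitrary ordering $\sigma$, and examine step $i$ with $H = G - \{\sigma_1, \dots, \sigma_{i-1}\}$. The crucial inclusion $G' := G - (\MS \setminus \{\sigma_i\}) \subseteq H$ holds because $\{\sigma_1, \dots, \sigma_{i-1}\} \subseteq \MS \setminus \{\sigma_i\}$; since degrees can only grow when passing from $G'$ to $H$, rule 1 for $\sigma_i$ in $G'$ transfers directly to $H$. The main obstacle is rule 2: if $v$ is a leaf of $H$ adjacent to $\sigma_i$, I need to show $\deg_H(\sigma_i) \geq 3$, while the definition only supplies the analogous statement for leaves of $G'$. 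I plan to split on whether $v \in \MS$. If $v \notin \MS$, then $v$ survives in $G'$, and since the neighbors of $v$ in $G$ other than $\sigma_i$ all lie in $\{\sigma_1, \dots, \sigma_{i-1}\} \subseteq \MS$, they are removed when passing to $G'$, so $v$ remains a leaf of $G'$ incident to $\sigma_i$, and rule 2 in $G'$ forces $\deg_{G'}(\sigma_i) \geq 3$. If instead $v \in \MS$, then all neighbors of $v$ in $G$ lie in $\{\sigma_1, \dots, \sigma_i\} \subseteq \MS \setminus \{v\}$, so $v$ is isolated in $G - (\MS \setminus \{v\})$, contradicting rule 1 applied to $v$ itself. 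This vacuity in the second case is the delicate step, and it closes the induction.
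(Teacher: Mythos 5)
Your proof is correct, but it takes a genuinely different route from the paper's. The paper argues both directions by contrapositive: for one direction it observes that if the algorithm fails at step $p$, then the prefix $\{s_{\sigma(1)},\dots,s_{\sigma(p)}\}$ already violates Definition~\ref{def:prime-singletons} (with $G'$ for $s_{\sigma(p)}$ coinciding with the algorithm's current graph $H$), and since subsets of prime collections are prime, $\MS$ cannot be prime; for the other it takes a minimal non-prime subset, reorders so the offending element comes right after the other members of that subset, and notes the algorithm must fail. The paper thereby never has to compare the algorithm's graph $H$ to the definition's $G'$ when they differ — it always arranges for them to coincide. Your argument is direct on both sides: the $(\Leftarrow)$ direction is a clean induction on $t$ using the "append $s$ at the end" trick, and the $(\Rightarrow)$ direction is the more substantial contribution, since you confront the discrepancy between $H = G - \{\sigma_1,\dots,\sigma_{i-1}\}$ and $G' = G - (\MS\setminus\{\sigma_i\})$ head-on. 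Rule 1 transfers monotonically, but Rule 2 requires your case split on whether the leaf $v$ of $H$ lies in $\MS$: if not, $v$ survives as a leaf of $G'$ adjacent to $\sigma_i$ (because all of $v$'s other $G$-neighbors sit in $\{\sigma_1,\dots,\sigma_{i-1}\}\subseteq\MS\setminus\{\sigma_i\}$), and Rule 2 in $G'$ forces $\deg_{G'}(\sigma_i)\ge3\le\deg_H(\sigma_i)$; if $v\in\MS$, then $N_G(v)\subseteq\{\sigma_1,\dots,\sigma_i\}\subseteq\MS\setminus\{v\}$ makes $v$ isolated in $G-(\MS\setminus\{v\})$, contradicting Rule 1 for $v$. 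Both approaches are valid; yours is longer but more explicit about why the algorithm's coarser graph still detects every violation, while the paper's is terser but leans on the implicit heredity of primeness under taking subsets.
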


\begin{proof}
Suppose that $\sigma \in S_t$ is a permutation such that the algorithm fails with the input $(s_{\sigma(1)}, \dots, s_{\sigma(t)})$. In particular, assume that the algorithm fails at step $ p \in \{1, \dots, t \}$. Then it follows that $\{s_{\sigma(1)}, \dots, s_{\sigma(p)} \}$ is not a prime collection of singletons. So, by Definition~\ref{def:prime-singletons}, the set $\MS$ is not a prime collection of singletons.
Conversely, if $\MS$ is not a prime collection of singletons then, by definition, there exists $p \in \{1, \dots, t\}$ and $j \in \{1, \dots, p \}$ such that $\{s_1, \dots, s_{p-1} \}$ is a prime collection of singletons and either: $s_j$ is a leaf or isolated vertex in $G'$, or $s_j$ is adjacent to a leaf of $G'$ and has degree two, where $G'$ is the induced subgraph of $G$ obtained by deleting the vertices $\{s_1, \dots, s_{j-1}, s_{j+1}, \dots, s_p \}$. Now we order the elements of $\MS$ as:
$
(s_1, \dots, s_{j-1}, s_{j+1}, \dots, s_p, s_j, s_{p+1}, \dots, s_t).
$
By construction, we have that Algorithm~\ref{alg:prime_check} fails in the for-loop when checking vertex $s_j$.
\end{proof}

\begin{lemma}\label{lem:m3_globally_min_loops}
Let $M$ be a minimal matroid for $\Delta_G$. Then the loops of $M$ are a prime collection of~singletons~for~$G$.
\end{lemma}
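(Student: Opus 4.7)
The plan is a proof by contrapositive: I assume the loop set $L \subseteq [n]$ of $M$ is not a prime collection of singletons for $G$, and produce a matroid $N$ with $\mathcal{D}(N) \subsetneq \mathcal{D}(M)$ and $\Delta_G \subseteq \mathcal{D}(N)$, contradicting the minimality of $M$. By Remark~\ref{rem:big-circuits} we may assume that $M$ has rank at most $3$.

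The first step is to extract a single bad loop. By Proposition~\ref{prop:algorithm_prim_singletons}, some ordering of $L$ makes Algorithm~\ref{alg:prime_check} fail at a step $p$, meaning that in $H := G - \{s_1, \dots, s_{p-1}\}$ the vertex $v := s_p$ either has degree at most $1$ or has degree $2$ and is adjacent to a leaf. A short monotonicity observation, namely that degrees only decrease under vertex deletion and a leaf remains a leaf so long as its unique neighbor is retained, shows that the same failure persists in the smaller graph $G' := G - (L \setminus \{v\})$. This yields $v \in L$ satisfying one of: (A) $v$ is isolated in $G'$; (B) $v$ is a leaf of $G'$ with unique neighbor $u$; or (C) $v$ has degree $2$ in $G'$ and is adjacent to a leaf $u$ of $G'$, with other neighbor $w$.

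Next, I construct $N$ by a single parallel extension. Let $\tilde M := M \setminus v$. In case (B), let $N$ be the parallel extension of $\tilde M$ obtained by adding $v$ parallel to $u$; in case (C), add $v$ parallel to $w$; in case (A), add $v$ parallel to any non-loop element of $\tilde M$ when one exists, and otherwise add $v$ as a coloop (the latter only happens when $\tilde M$ has rank $0$, so $r(N) \le 1$ in that subcase). In every case, $v$ becomes a non-loop of $N$, every other loop of $M$ remains a loop of $N$, and $r(N) \le 3$. The inclusion $\mathcal{D}(N) \subseteq \mathcal{D}(M)$ is immediate: every circuit of $N$ either avoids $v$ (hence is a circuit of $\tilde M$, i.e.\ a circuit of $M$) or contains $v$ (hence is dependent in $M$ because $v$ is a loop there). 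Strictness follows from $\{v\} \in \mathcal{D}(M) \setminus \mathcal{D}(N)$.

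The final and main step is verifying $\Delta_G \subseteq \mathcal{D}(N)$. Every $4$-element edge of $\Delta_G$ is automatically dependent in $N$ since $r(N) \le 3$, and every $2$-path $F \in \Delta_G$ with $v \notin F$ is dependent in $N$ because $\tilde M$ agrees with $M$ on subsets avoiding $v$. For a $2$-path $F$ containing $v$, a short case check over the position of $v$ in $F$ (endpoint versus middle) together with the identity of $v$'s neighbors in $F$ shows that $F$ is always dependent in $N$: either $F$ contains a loop of $M$ other than $v$ (still a loop in $N$), or $F$ contains the newly introduced parallel pair. The key structural facts are that in (A) every neighbor of $v$ in $G$ lies in $L$; in (B) $v$'s only non-loop neighbor in $G$ is $u$; and in (C) $v$'s non-loop neighbors are $u$ and $w$, while the leaf condition on $u$ in $G'$ forces $u$'s only non-loop neighbor in $G$ to be $v$. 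The main obstacle is choosing the right partner in case (C): pairing $v$ with $u$ fails, because $2$-paths of the form $\{v, w, b\}$ with $b$ a non-loop neighbor of $w$ would have no built-in certificate of dependence, whereas pairing $v$ with $w$ succeeds precisely because the leaf condition on $u$ forces every $2$-path $\{v, u, b\}$ (with $b \neq v$) to pick up an extra loop. This asymmetry is exactly what rule $2$ of Definition~\ref{def:prime-singletons} encodes.
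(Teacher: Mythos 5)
Your proof is correct and follows essentially the same strategy as the paper: locate a loop $v$ violating the conditions of Definition~\ref{def:prime-singletons}, delete it, and re-add it in parallel to a non-loop (or, in the rank-$0$ isolated case, as a coloop) to obtain a strictly smaller matroid that still dominates $\Delta_G$, contradicting minimality. The one noteworthy technical variation is your choice of reference graph: you work in $G' = G - (L\setminus\{v\})$, which matches Definition~\ref{def:prime-singletons} directly and automatically guarantees that the neighbors of $v$ in $G'$ are non-loops, whereas the paper works in the partially reduced graph $G-\{s_1,\dots,s_{i-1}\}$ arising from a failing run of Algorithm~\ref{alg:prime_check}, where a priori the vertex chosen as the parallel partner could itself be a later loop. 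Your monotonicity step (degrees only drop under deletion, and a leaf of $H$ whose unique neighbor is $v$ remains a leaf in $G' \subseteq H$) is precisely what is needed to transfer the algorithm's failure down to $G'$, and your subsequent case analysis for $\Delta_G\subseteq\mathcal{D}(N)$, including the observation that $w$ rather than $u$ must be the parallel partner in case (C) because the leaf condition on $u$ forces all other neighbors of $u$ in $G$ to be loops, is accurate and captures the same structural facts as the paper's argument.
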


\begin{proof}
We follow the algorithmic description of the construction of a prime collection of singletons. Let $\MS$ be the loops of $M$. Suppose for a contradiction that $\MS$ fails to be a prime collection of singletons for $G$. Then there exists an ordering of the loops, say $\MS = (s_1, s_2, \dots, s_p)$, such that at step $i$ we are unable to choose $s_i$ for the prime collection. Let $G'$ be the induced subgraph of $G$ obtained by removing vertices $s_1, \dots, s_{i-1}$. In order for us to be unable to choose $s_i$ for the prime collection there are two cases:
\begin{enumerate}
    \item $s_i$ is a leaf or isolated vertex of $G'$,
    \item $s_i$ is adjacent to a leaf and has degree two in $G'$.
\end{enumerate}

\noindent\textbf{Case 1.}  We construct a new matroid $M'$ on $[n]$ as follows. First construct the matroid $M \backslash s_i$ by deleting $s_i$. 

If $s_i$ is a leaf of $G'$, denote by $x$ the unique vertex adjacent to $s_i$. Then, let $M'$ be the matroid obtained from $M \backslash s_i$ by adding to the ground set the element $s_i$ in the same parallel class as $x$.

 If $s_i$ is an isolated vertex of $G'$, note that every neighbor of $s_i$ in $G$ is a loop of $M$. Let $M'$ be the matroid obtained from $M \backslash s_i$ by freely adding $s_i$ to the ground set of $M \backslash s_i$.
 
 In either case, note  that the collection of dependent sets of $M'$ contains $\Delta_G$ because every path with three vertices containing $s_i$ contains either $x$ or some loop of $M'$.

 We now show that $M' \lneq M$. Take any dependent set $D$ of $M'$. If $s_i \in D$, then clearly $D$ is a dependent set of $M$ since $s_i$ is a loop of $M$. On the other hand, if $s_i \not\in D$, then $D$ is a dependent set of $M' \backslash s_i = M \backslash s_i$. Hence $D$ is a dependent set of $M$. To show that $M \neq M'$ we simply note that $s_i$ is a loop in $M$ but is not a loop in $M'$. So we have shown $\Delta_G\subseteq\MD(M')\subsetneq \MD(M)$. Hence $M$ is not a minimal matroid for $\Delta_G$, a contradiction.

\smallskip

\noindent\textbf{Case 2.} Let $s_i$ be a vertex of degree two which is adjacent to a leaf of $G'$. Let $x$ and $y$ be the vertices adjacent to $s_i$, with $x$ being a leaf. We construct a new matroid $M'$ by first deleting $s_i$ to form $M\backslash s_i$. Then we define $M'$ to be the matroid obtained by adding $s_i$ to the same parallel class as $y$ in $M\backslash s_i$. Note that $\MD(M')$ must contain $\Delta_G$ because any path with three vertices containing $s_i$ (and not containing any loop of $M'$) must also contain $y$. To show that $M' \lneq M$ we use the same argument as Case 1, so we have that $\Delta_G\subseteq\MD(M')\subsetneq \MD(M)$. Therefore $M$ is not a minimal matroid for $\Delta_G$, a contradiction.
\end{proof}

We now characterize matroids which are minimal among all the matroids whose circuits of size one and two are exactly the members of $\MS$. One can think of this as a constrained notion of minimality.  In the following lemma, note that $M$ is fixed, and $\MS$ is obtained from $M$. In a matroid, every circuit of size two is disjoint from every circuit of size one. Therefore, $\MS$ is a valid collection for $G$, and $M_{\MS}$ is well-defined.

\begin{lemma}[Constrained minimal matroids]\label{lem:m3_locally_min}
Let $M$ be a matroid with $\Delta_G\subseteq\MD(M)$ whose circuits of size one and two are the members of $\MS$. Then $M_{\MS} \le M$.
\end{lemma}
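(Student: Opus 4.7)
The plan is to show that every circuit of $M_{\MS}$ is dependent in $M$; this suffices since dependent sets are upward-closed and every dependent set contains a circuit. By Proposition~\ref{pro:MS}, the circuits of $M_{\MS}$ come in four types, which I would treat separately.

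For types (1), (2), and (4) the argument is quick. Singletons in $\MS$ are loops of $M$ by hypothesis, handling type (1). For type (2), the assumption that the $2$-circuits of $M$ are exactly the $2$-subsets in $\MS$, combined with transitivity of the parallel relation, shows that every pair inside a connected component of $G(\MS)$ (that is, every $2$-subset of a cloud) is also a $2$-circuit of $M$. For type (4), every $4$-subset of $[n]$ either already lies in $\Delta_G$ or contains a $2$-path of $G$; in either case it contains an element of $\Delta_G \subseteq \MD(M)$ and is therefore dependent in $M$.

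The substance of the proof is type (3). The key step is the following rank lemma: for any path $w_0, w_1, \ldots, w_m$ in $G$ that does not cross any cloud of $\MS$ and contains no loop of $M$, the set $\{w_0, \ldots, w_m\}$ has rank at most $2$ in $M$. I would prove this by induction on $m$; the base case $m=2$ is immediate since $\{w_0, w_1, w_2\} \in P_2(G) \subseteq \Delta_G \subseteq \MD(M)$. For the inductive step, let $F$ be the closure of $\{w_0, \ldots, w_{m-1}\}$, which has rank at most $2$ by the inductive hypothesis. If the pair $\{w_{m-2}, w_{m-1}\}$ spans $F$, then the dependent triple $\{w_{m-2}, w_{m-1}, w_m\} \in \Delta_G$ forces $w_m \in F$. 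The subtle case is when $w_{m-2}$ and $w_{m-1}$ are parallel in $M$ and therefore lie in a common cloud $C$; here the no-crossing hypothesis forces either $w_m \in C$ (so $w_m$ is parallel to $w_{m-1}$ and hence lies in $F$), or else that every earlier $w_i$ with $i < m-2$ also lies in $C$, which would make all of $\{w_0, \ldots, w_{m-1}\}$ parallel and hence $F$ of rank $1$, contradicting the rank-$2$ assumption.

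To conclude type (3), take $\{a,b,c\}$ that is not blocked and contains no member of $\MS$. Then $a, b, c$ are non-loops of $M$, pairwise non-parallel, and lie in a common connected component of the forest $G$. Let $v$ be their median in this tree. If $v \in \{a, b, c\}$, say $v = a$, then the path from $b$ to $c$ contains $a$ and its vertex set has rank at most $2$ by the rank lemma, so $\{a,b,c\}$ is dependent in $M$. If $v \notin \{a, b, c\}$, then since $a, b, c$ are pairwise non-parallel, $v$ is parallel to at most one of them. If $v$ is parallel to, say, $a$, then $a$ lies in the closure of $\{v\}$, which is contained in the rank-at-most-$2$ closure of the vertex set of the path from $b$ to $c$ (which contains $v$); hence $\{a,b,c\}$ is dependent. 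Otherwise $v$ is parallel to none of $a, b, c$, so the rank-$2$ closures of the vertex sets of the paths from $a$ to $b$ and from $a$ to $c$ both equal the unique rank-$2$ flat through the non-parallel pair $\{a,v\}$; they coincide, placing $\{a,b,c\}$ in a common rank-$2$ flat. The main obstacle will be the subtle case of the rank lemma where two consecutive vertices of the path are parallel, and the no-crossing hypothesis is essential to prevent the rank from jumping upon leaving the cloud.
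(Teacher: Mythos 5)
Your proof is correct and takes a genuinely different route from the paper's. Both proofs reduce to showing that the $3$-circuits of $M_\MS$ are dependent in $M$ (types (1), (2), (4) are routine, and your treatment of type (2) via transitivity of the parallel relation and of type (4) via $\Delta_G$ matches the paper's, though you make the transitivity step more explicit than the paper does). For type (3), however, the arguments diverge. The paper proves the claim ``all $3$-subsets of $\spn(a,b,c)$ are dependent in $M$'' by induction on $|\spn(a',b',c')|$, using the auxiliary Lemma~\ref{lem:dep-set-elim} (a circuit-elimination-style step) applied to three sub-triples through an interior vertex $d$. You instead formulate and prove a cleaner invariant, the ``rank lemma'' that any unblocked, loop-free path in $G$ has rank at most $2$, by induction on path length, and then handle the Y-shaped convex hull via a tree-median case split. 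Your rank lemma is in substance equivalent to the paper's claim (a set has rank $\le 2$ iff all its $3$-subsets are dependent), but your proof avoids Lemma~\ref{lem:dep-set-elim} and controls ranks directly; the cost is the extra median argument, which the paper's convex-hull induction absorbs automatically.

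One small technical wrinkle: in the inductive step of your rank lemma, the dichotomy ``$\{w_{m-2}, w_{m-1}\}$ spans $F$'' vs.\ ``$w_{m-2}, w_{m-1}$ parallel'' is not disjoint when $F$ has rank $1$, and in that overlap the first branch's conclusion ($w_m \in F$) does not follow from the dependence of the triple $\{w_{m-2},w_{m-1},w_m\}$ (a parallel pair makes any superset dependent without constraining $w_m$). The claimed ``contradiction with the rank-2 assumption'' is also not literal, since the inductive hypothesis only gives rank $\le 2$. Neither issue is serious: if $F$ has rank at most $1$ then $\{w_0,\ldots,w_m\}$ has rank at most $2$ trivially, so you may dispose of that case first and then split cleanly on whether $\{w_{m-2},w_{m-1}\}$ is independent or parallel. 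With that tidying, the argument is complete and correct.
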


\begin{proof}
By assumption, all of the circuits of $M_{\MS}$ of size one and two are also circuits of $M$. All $4$-subsets of $[n]$ are dependent in both $M$ and $M_\MS$. Therefore, to show that $M_\MS \le M$, it suffices to show that all $3$-circuits of $M_\MS$ are dependent in $M$. Let $D = \{a,b,c\}$ be such a circuit of $M_\MS$.
Let $G'$ be the induced subgraph of $G$ obtained by removing those vertices $v$ for which $\{ v\} \in \MS$. Now since $D$ is a circuit in $M_\MS$, we have that for any pair of elements $x,y \in D$, there is a path in $G'$ from $x$ to $y$ and it is not blocked by $\MS$. We denote by $\spn(x , y)$ the collection of vertices on the path from $x$ to $y$. Similarly we define the convex hull of a set of vertices:
\[
\spn(A) = \bigcup_{x,y \in A} \spn(x,y).
\]
By abuse of notation, we identify $\spn(A)$ with the induced subgraph of $G'$ whose vertices are $\spn(A)$.

To complete the proof of the theorem, it suffices to prove the following claim. Since, as a result
we have that $\{a,b,c \}$ is a dependent set in $M$. Therefore, every dependent set in $M_\MS$ is also dependent in $M$, as desired. 

\medskip

\noindent \textbf{Claim.} \textit{All $3$-subsets of ${\rm Conv}(a,b,c)$ are dependent in $M$.}

We must show that every $3$-subset $\{a',b',c'\}\subseteq\spn(a,b,c)$ is dependent in $M$. We proceed by induction on $|\spn(a',b',c')|$.
For the base case, suppose that $|\spn(a',b',c')| = 3$. Then $\spn(a',b',c')$ is a path in $G$ with three vertices. By definition of $\Delta_G$, we have that $\{a',b',c' \} \in \Delta_G$. Since $\Delta_G\subseteq\MD(M)$, we have that $\{ a', b', c'\}$ is dependent in $M$.  
For the inductive step, assume that all $3$-subsets of ${\rm Conv}(a,b,c)$ whose convex hull has size at most $k\geq3$ are dependent and $|\spn(a',b',c')| = k+1$. Since $|\spn(a',b',c')| \ge 4$, there is a vertex $d \in \spn(a', b', c')\backslash \{a',b',c'\}$. Note that neither $a'$, $b'$, $c'$, nor $d$ can be a loop of $M$ because then it would be a loop of $M_\MS$. This implies either that $\{a,b,c\}$ is blocked or that an element of $\{a,b,c\}$ is a loop of $M_\MS$. This is impossible since $\{a,b,c\}$ is a circuit of $M_\MS$.

By induction, $\{a',b',d\}$, $\{a',c',d\}$, and $\{b',c',d\}$, are all dependent since their respective convex hulls lie strictly inside $\spn(a',b',c')$. Since $\{a',b',d\}$ and $\{a',c',d\}$ are dependent, Lemma~\ref{lem:dep-set-elim} implies that either $\{a',b',c'\}$ or $\{a',d\}$ is dependent. Similarly, if $\{a',b',c'\}$ is not dependent, then $\{b',d\}$ and $\{c',d\}$ are dependent. Therefore, since $d$ is not a loop, we may assume that the rank of $\{a',b',c',d\}$ is at most one, implying that $\{a',b',c'\}$ is dependent. This completes the proof of the claim.
\end{proof}

\begin{lemma}\label{lem:m3_globally_min_implies_MS}
If $M$ is a minimal matroid for $\Delta_G$, then $M = M_{\MS}$ for some prime collection $\MS$.
\end{lemma}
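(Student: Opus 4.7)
The plan is to exhibit a prime collection $\MS$ with $M_\MS = M$ by first using the already-established machinery to locate the \emph{singletons} and the \emph{clouds} of such a collection, and then to show that a matroid-theoretic obstruction forcing this data to come from a prime collection cannot occur for a minimal $M$.

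Let $L$ be the set of loops of $M$. By Lemma~\ref{lem:m3_globally_min_loops}, $L$ is a prime collection of singletons for $G$, and I set $G' := G\setminus L$. Let $\MS'$ be the collection of all circuits of $M$ of size $1$ or $2$; then $\MS'$ is a valid collection, and Lemma~\ref{lem:m3_locally_min} gives $M_{\MS'} \le M$. Since every $2$-path $\{a,b,c\}$ of $G$ either contains a singleton of $\MS'$, contains a $2$-subset of a cloud, or is unblocked, we have $\Delta_G \subseteq \MD(M_{\MS'})$, so minimality of $M$ forces $M = M_{\MS'}$.

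Now $\MS'$ itself need not be a prime collection, but $M_\MS$ depends on $\MS$ only through its singletons and its clouds (equivalence classes under the transitive closure of the $2$-subsets). So it suffices to show:
\textbf{(A)} every parallel class $P$ of $M$ with $|P|\ge 2$ is the vertex set of a connected subgraph of $G'$; and
\textbf{(B)} every $v\in P$ is incident in $G'$ to some edge $\{v,u\}$ with $u\notin P$.
Granted (A), the forest $G'$ restricts on $P$ to a subtree $T_P$; setting $\MS := L \cup \bigcup_P E(T_P)$ yields a valid collection whose singletons match $L$ (prime by Lemma~\ref{lem:m3_globally_min_loops}), whose $2$-subsets are edges of $G'$, and whose clouds are exactly the parallel classes of $M$. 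Condition (B) supplies the required outside edge, so $\MS$ is prime, and $M_\MS = M_{\MS'} = M$.

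I would prove (A) and (B) by contradiction using the minimality of $M$. For (A), suppose $P = P_1\sqcup P_2$ with no edge of $G'$ between $P_1$ and $P_2$. Representing $M$ by vectors in $\CC^3$ and perturbing the vectors indexed by $P_2$ off the common line of $P$ onto a generic parallel line through the origin yields a matroid $M''$ with $P_1,P_2$ distinct parallel classes and all other loops and higher circuits preserved; the only dependencies of $M$ lost in $M''$ are $2$-circuits $\{v,w\}$ with $v\in P_1$, $w\in P_2$ (and their consequences). The key combinatorial observation is that any $2$-path $\{a,b,c\}\in\Delta_G$ containing such a pair must have its $G$-path from $v$ to $w$ (of length $\le 2$) route either through an edge of $G'$ connecting $P_1$ and $P_2$ (excluded by hypothesis) or through a vertex of $L$ (which remains a loop of $M''$, preserving dependence). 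Hence $\Delta_G \subseteq \MD(M'')$ while $\MD(M'')\subsetneq \MD(M)$, contradicting minimality. For (B), if every edge of $G'$ incident to $v\in P$ lies inside $P$, one argues analogously that $v$ can be absorbed into a smaller matroid by identifying it with another element of $P$ at the level of the simplification, again contradicting minimality.

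The main obstacle is the perturbation argument in (A): one must rigorously verify that every $D\in\Delta_G$ remains dependent in $M''$. This reduces to the case analysis above, the crux being that a forest structure on $G$ uniquely determines the $G$-path between any two vertices, so a $2$-path realising $\{v,w\}\subseteq D$ with $v\in P_1$, $w\in P_2$ must use precisely the edges or loops that the disconnectedness hypothesis forbids or retains. Assembling these ingredients gives $\MS$ prime with $M_\MS = M$, completing the proof.
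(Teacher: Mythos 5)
Your proof diverges from the paper's after the shared opening moves (setting $\MS'$ to be the circuits of $M$ of size at most two and deducing $M = M_{\MS'}$ from Lemma~\ref{lem:m3_locally_min} and minimality). The paper then argues combinatorially: it locates a vertex $v$ of $G'$ all of whose incident $G'$-edges lie in $\MS'$, removes a single $2$-subset $\{v,w\}$, and shows the resulting matroid is still dependent for $\Delta_G$ but strictly smaller. You instead observe --- correctly and usefully, since $\MS'$ contains \emph{every} pair inside each parallel class and hence cannot literally satisfy Definition~\ref{def:prime_collection} once a parallel class has three or more elements --- that $M_{\MS'}$ depends on $\MS'$ only through its loops and clouds, reduce the statement to the structural facts (A)--(B) about parallel classes, and build a sparser prime collection out of spanning trees $T_P\subseteq G'[P]$. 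This reformulation is a genuine clarification of the step the paper glosses.

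The gap is in the perturbation proof of (A). Pushing the vectors of $P_2$ onto a nearby line $\ell'$ through the origin requires $\ell'$ to lie in every dependent rank-$2$ flat of $M$ meeting $P$; each such flat contains $\ell_P$, so if two or more exist their common intersection is exactly $\ell_P$ and no admissible $\ell'$ exists. In that case the $3$-circuits $\{a,c,d\}$ with $a\in P_2$ and $c,d\notin P$ --- arising from $2$-paths $c$--$a$--$d$ in $G$ --- become independent in $M''$, and your verification of $\Delta_G\subseteq\MD(M'')$, which only inspects $2$-paths containing a cross pair with one endpoint in each $P_i$, misses exactly these constraints. The repair is to argue combinatorially, as the paper does: set $\MS'' := \MS'\setminus\{\{a,b\}:a\in P_1,\ b\in P_2\}$. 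A $2$-path has only three vertices, so by Definition~\ref{def:cloud and crossing} it cannot cross any cloud; hence any $2$-path avoiding loops and containing no $2$-subset of a cloud of $\MS''$ is an unblocked $3$-circuit of $M_{\MS''}$. This gives $\Delta_G\subseteq\MD(M_{\MS''})\subsetneq\MD(M)$, contradicting minimality, after which your spanning-tree construction of the prime collection goes through.
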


\begin{proof}
Let $\MS$ be the collection of all circuits of $M$ of size one and two. Since $\Delta_G\subseteq\MD(M)$, Lemma~\ref{lem:m3_locally_min} implies that $M_\MS \le M$. However, $M$ is a minimal matroid. Since $\Delta_G\subseteq\MD(M_\MS)$, the minimality of $M$ implies that $M = M_\MS$. So it suffices to show that $\MS$ is a prime collection.

By Lemma~\ref{lem:m3_globally_min_loops}, we have that the loops of $M$ form a prime collection of singletons for $G$. The collection of $2$-element circuits of a matroid does not intersect the set of loops, so $\MS$ is a valid collection of subsets.

Let $G'$ be the induced subgraph of $G$ obtained by deleting the vertices that are singleton sets in $\MS$. Suppose for a contradiction that $\MS$ is not a prime collection. Then it follows that there exists a vertex $v$ in $G'$ such that, for every vertex $w$ adjacent to $v$ in $G'$, we have that $\{v,w \}$ is a member of $\MS$.

Let $w$ be a vertex adjacent to $v$, and consider the valid collection $\MS'=\MS\backslash\{\{v,w\}\}$. A set is a singleton in $\MS$ if and only if it is a singleton in $\MS'$. Since every vertex adjacent to $v$ is in the same cloud of $\MS$ as $v$, a $3$-subset of $[n]$ is blocked by $\MS$ if and only if it is blocked by $\MS'$. We also have $\Delta_G\subseteq \MD(M_\MS')$.

Since the collection of $2$-subsets in $\MS'$ is properly contained in the the collection of $2$-subsets in $\MS$, we have that $M_\MS'\lneq M_\MS$, a contradiction. Therefore, we conclude that $\MS$ is a prime collection.
\end{proof}

The last ingredient in the proof of Theorem~\ref{thm:m3_globally_min} is to show 
that $M_\MS$ is minimal for any prime collection $\MS$.

\begin{lemma}\label{lem:m3_MS<MT_implies_MS=MT}
If $\MS$ and $\MT$ are prime collections with $M_\MS \ge M_\MT$, then $\MS = \MT$.
\end{lemma}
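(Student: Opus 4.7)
The plan is to argue by contradiction, assuming $\MS \ne \MT$ with $M_\MT \le M_\MS$ and exhibiting a $3$-subset of $V(G)$ that is dependent in $M_\MT$ but independent in $M_\MS$, contradicting $\MD(M_\MT) \subseteq \MD(M_\MS)$. The argument will proceed in two phases: first matching the singleton parts of $\MS$ and $\MT$, then matching the $2$-subset parts (equivalently, the clouds). The containment ${\rm sing}(\MT) \subseteq {\rm sing}(\MS)$ follows for free, since loops of $M_\MT$ are loops of $M_\MS$.

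For the reverse singleton containment, I would suppose $s \in {\rm sing}(\MS) \setminus {\rm sing}(\MT)$ and let $P$ denote the set of neighbors of $s$ in $G$ outside ${\rm sing}(\MS)$. The primality of $\MS$ at $s$ forces $|P| \ge 2$, with $|P| \ge 3$ whenever $s$ is adjacent to a leaf in $G \setminus ({\rm sing}(\MS) \setminus \{s\})$. The case $|P| \ge 3$ is the clean one: three elements $a, b, c \in P$ have pairwise paths in $G$ of length two through $s$, which can cross no cloud and contain no singleton of $\MT$ (since $s \notin {\rm sing}(\MT)$); the pairs $\{a,b\}$, $\{a,c\}$, $\{b,c\}$ are non-edges of the forest $G$ and hence not in $\MT$, so $\{a,b,c\}$ is a $3$-circuit of $M_\MT$. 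In $M_\MS$ it is blocked by $s$ and contains no $\MS$-set, hence independent, yielding the contradiction. The case $|P|=2$ requires more care: writing $P=\{a,b\}$, the sub-case $\{s,a\}, \{s,b\} \in \MT$ is handled immediately because $\{a,b\}$ becomes a $2$-circuit of $M_\MT$ that is a non-edge pair of non-loops of $M_\MS$, hence independent there. Otherwise the no-leaf clause of $\MS$-primality at $s$ guarantees further non-loop neighbors $a'$ of $a$ and $b'$ of $b$; using primality of $\MS$ at $a$ or $b$ I would select $a'$ with $\{a, a'\} \notin \MS$ (trivially if $a$ lies in no $\MS$-cloud; otherwise by $\MS$-primality), and verify that $\{a, b, a'\}$ (or symmetrically $\{a, b, b'\}$) is a $3$-circuit of $M_\MT$ but is blocked with no $\MS$-set in $M_\MS$.

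Once ${\rm sing}(\MS) = {\rm sing}(\MT) =: L$ is established, the dependency inclusion forces each $\MT$-cloud inside an $\MS$-cloud, so $\MT$-clouds refine $\MS$-clouds. To promote this refinement to equality, I would assume an $\MS$-cloud $C$ strictly splits in $\MT$. The $2$-subsets within any cloud of a prime collection are exactly the edges of the induced subtree of $G$ spanned by that cloud (a connected spanning subgraph of a tree must be the whole tree), so the induced subtree on $C$ has all edges in $\MS$; therefore some edge $\{u,v\}$ of this tree bridges two different $\MT$-pieces of $C$ and lies in $\MS \setminus \MT$. Primality of $\MS$ at $u$ and at $v$ then produces non-$\MS$ edges $\{u,w\}, \{v,w'\}$ in $G \setminus L$ that must leave $C$; these are also outside $\MT$ because any $\MT$-$2$-subset between non-loops is automatically in $\MS$, and the forest property ensures $w \ne w'$. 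The triple $\{v, w, w'\}$ is then a $3$-circuit of $M_\MT$ (the length-$3$ path $w, u, v, w'$ contains no $\MT$-cloud pair, and the three pairs are non-edges of $G$ or explicitly not in $\MT$) but is independent in $M_\MS$ (the pair $\{w, w'\}$ is blocked by the crossing of $C$ at $\{u, v\}$, and no $\MS$-set lies inside the triple). This contradiction forces $\MS$-clouds to equal $\MT$-clouds, and combined with the singleton equality yields $\MS = \MT$. The main obstacle is the $|P|=2$ sub-case of the singleton step, where the three-neighbor construction is unavailable and one must thread primality at both $s$ and its neighbors through case analysis on the $\MT$-membership of $\{s,a\}, \{s,b\}$.
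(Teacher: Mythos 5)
Your strategy mirrors the paper's at a high level: first force $\mathrm{sing}(\MS)=\mathrm{sing}(\MT)$, then force the $2$-subsets to agree, each time exhibiting a set dependent in $M_\MT$ but independent in $M_\MS$. For the singleton step, though, you take a genuinely different route. The paper establishes that $\{x,x',y\}$ is dependent in $M_\MT$ via a matroid-closure computation, feeding $\Delta_G\subseteq\MD(M_\MT)$ into the closure operator; this is indifferent to what $\MT$'s clouds look like near $s$. You instead try to exhibit an explicit circuit of $M_\MT$, which is more concrete but forces a case split according to how $\MT$'s clouds sit around $s$ --- and that case split is where the gap lies.

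In the $|P|=2$ ``otherwise'' branch you assert that $\{a,b,a'\}$ (``or symmetrically $\{a,b,b'\}$'') is a $3$-circuit of $M_\MT$. That is false precisely when exactly one of $\{s,a\},\{s,b\}$ lies in $\MT$. Say $\{s,a\}\in\MT$ and $\{s,b\}\notin\MT$. Then the path $b,s,a,a'$ from $b$ to $a'$ crosses the $\MT$-cloud containing $\{s,a\}$ (we have $\{b,s\}\notin\MT$ by assumption and $\{a,a'\}\notin\MT$ because $\{a,a'\}\notin\MS$), so $\{b,a'\}$ is blocked by $\MT$; checking there is also no $\MT$-singleton or $2$-circuit inside, the triple $\{a,b,a'\}$ is in fact \emph{independent} in $M_\MT$, the opposite of what you need. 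The repair is to take $\{a,b,b'\}$ instead: the path $a,s,b,b'$ does not cross any $\MT$-cloud since $\{s,b\}\notin\MT$, and one verifies it is a $3$-circuit of $M_\MT$ while being blocked by $s$ and $\MS$-independent. So the triple must be selected according to which of $\{s,a\},\{s,b\}$ is in $\MT$ --- you flag this in your closing sentence but the body of the argument does not execute the selection, and as written it sometimes picks the wrong triple. A similar but harmless imprecision appears in the $|P|\ge3$ case: $\{a,b,c\}$ need not be a $3$-circuit of $M_\MT$ if, say, $\{a,s\},\{s,b\}\in\MT$ --- but then $\{a,b\}$ is already a $2$-circuit of $M_\MT$ that is independent in $M_\MS$, so the contradiction survives. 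Your cloud-matching phase is sound, and in fact supplies details the paper's own one-sentence treatment of that step leaves implicit.
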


\begin{proof}
Let us begin by considering the loops of $M_\MS$ and $M_\MT$. Since $M_\MS \ge M_\MT$, we have that every loop of $M_\MT$ is a loop of $M_\MS$. Suppose for a contradiction that $M_\MS$ has strictly more loops than $M_\MT$. Let $G_\MS$ be the induced subgraph of $G$ obtained by removing the loops of $M_\MS$. We define $G_\MT$ similarly. Pick any loop $v$ in $M_\MS$ that is not a loop in $M_\MT$.
We now show that $\deg(v) \geq 2$ in $G_\MT$. Suppose otherwise. Then all but at most one of the neighbors of $v$ are loops in $M_\MT$. Since $M_\MS \ge M_\MT$, all but at most one of the neighbors of $v$ are loops in $M_\MS$ also. But this fact, with the assumption that $v$ is a loop in $M_\MS$, contradicts the assumption that $\MS$ is a prime collection. Therefore, $\deg(v) \geq 2$ in $G_\MT$. Similar reasoning shows that each vertex adjacent to $v$ in $G_\MT$ must be adjacent to a vertex in $G_\MT$ that is not a loop in $M_\MS$.

Since $\MT$ is a prime collection, there is at least one vertex $x$ adjacent to $v$ in $G_\MT$ such that $\{x,v\}\notin\MT$. Since $\deg(v) \geq 2$ in $G_\MT$, let $y\neq x$ be another neighbor of $v$ in $G_\MT$. We have already observed that there must be a vertex adjacent to $x$ in $G_\MT$ that is not a loop in $M_\MS$. Call this vertex $x'$. We may choose $x'$ so that $\{x,x' \}\notin\MS$ because, if there is no such vertex, then $x$ has no neighbor outside the cloud containing $x$ in $G_\MS$, which implies that $\MS$ is not a prime collection. Similarly, we let $y'$ be any vertex adjacent to $y$ such that $y$ is not a loop in $M_\MS$ and such that $\{y,y' \}\notin\MS$.
For $X\subseteq [n]$, let $\text{cl}(X)$ denote the closure of $X$ in $M_\MT$. Since $\Delta_G\subseteq \MD(M_\MT)$, we have that $\{x,x',v \}$ and $\{x,y,v\}$ are both dependent sets in $M_\MT$. By assumption, $\{x,x' \}$ is independent in $M_\MS$, implying that it is independent in $M_\MT$ also. Since $\{x,x',v \}$ is dependent, we have $v\in\text{cl}(\{x,x' \})$. Thus $\text{cl}(\{x,x' \})=\text{cl}(\{x,x',v \})$. Similarly, since $\{x,v\}$ is independent in $M_\MT$ but $\{x, y, v\}$ is dependent, we have that $y\in\text{cl}(\{x,v \})\subseteq\text{cl}(\{x,x',v \})=\text{cl}(\{x,x' \})$. Since $y\in\text{cl}(\{x,x' \})$, we have that $\{x,x',y\}$ is a dependent set in $M_\MT$.
However, by assumption, we have that $\{x,x' \}$ is independent in $M_\MS$. Hence $\{x,x',y \}$ is independent in $M_\MS$ because it is blocked by $v$. So we have found a dependent set in $M_\MT$ which is independent in $M_\MS$, a contradiction. Therefore, we deduce that $M_\MS$ and $M_\MT$ have the same loops.

To show that $\MS = \MT$, it remains to show that the $2$-subsets in $\MS$ are the same as the $2$-subsets in $\MT$. Since $M_\MS$ and $M_\MT$ have the same loops and $M_\MS \ge M_\MT$, it follows that every $2$-subset in $\MT$ is also in $\MS$. Suppose for a contradiction that $\MS$ has strictly more $2$-subsets than $\MT$. Therefore, either $\MS$ has more clouds than $\MT$, or there is a cloud of $\MT$ that is properly contained in a cloud of $\MS$. In either case, since every vertex in the cloud is adjacent to a vertex not in the cloud, there is a $3$-element set that is blocked by $\MS$ but not by $\MT$. This contradicts the assumption that $M_\MS \ge M_\MT$.
\end{proof}

\subsection{Irreducible matroid varieties.}\label{sec:comb_closure_forestlike}

In this subsection, we combine the results of \S\ref{sec:prime-collections-for-m=3}
to obtain a classification of the irreducible components of $V_{\Delta_G}$. 
In particular, we will prove Theorem~\ref{thm:adj-tree-decomp}.
 This gives a straightforward description of the irreducible components.

\begin{proposition}\label{prop:prime_reduction}
Let $G$ be a forest
and $\MS$ be a prime collection for $G$ as in Definition~\ref{def:prime_collection}.
Then for any non-central component $V_{N}$ of $V_{M_\MS}^{\comb}$ there exists another prime collection $\MS'$ for $G$, such that $V_{N}\subseteq V_{M_{\MS'}}$.
\end{proposition}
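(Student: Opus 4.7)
The plan is to reduce the problem to the forest-like configuration setting using Proposition~\ref{prop: MS is forestlike for prime S} and Theorem~\ref{thm:forestlike_comp}. Since the simplification of $M_\MS$ is the matroid of a forest-like configuration $\MC$, Theorem~\ref{thm:forestlike_comp} identifies the irreducible components of $V_{M_\MS}^{\comb}$ with the matroid varieties $V_{\MC_J}$, where $J$ ranges over subsets of $S = \{p \in \MP_\MC : |\mL_p| \ge 3\}$. A non-central component corresponds to a nonempty $J$, so the task reduces to exhibiting, for each such $J$, a prime collection $\MS'$ with $V_{\MC_J} \subseteq V_{M_{\MS'}}$.

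First I would show that each point $p \in S$ corresponds to a cloud $C_p$ of $\MS$ and not to a non-cloud, non-loop vertex of $G$. If a non-cloud, non-loop vertex $v$ lay on two distinct rank-$2$ flats $F_1, F_2$, then for any representatives $u_1 \in F_1 \setminus F_2$ and $u_2 \in F_2 \setminus F_1$, the blocking of $\{u_1, u_2\}$ (by a cloud crossing or a singleton in $G$) would, via the tree structure of $G$ and a case analysis on the position of $v$ relative to the blocking substructure, propagate to block at least one of $\{v, u_1\}$ or $\{v, u_2\}$. This contradicts $v \in F_1 \cap F_2$, so $|\mL_v| \le 1$ and every point of $S$ is the simplification of a cloud.

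Next I would construct
\[
\MS' := \Bigl( \MS \setminus \bigcup_{p \in J} \{ e \in \MS : e \subseteq C_p \} \Bigr) \cup \bigl\{ \{c_p^*\} : p \in J \bigr\},
\]
where for each $p \in J$ the cloud vertex $c_p^* \in C_p$ is chosen so that the new singletons, together with those already in $\MS$, form a prime collection of singletons. Since $|\mL_p| \ge 3$, the cloud subtree of $C_p$ in $G$ has at least three attached branches whose starts are non-loop vertices; a pigeonhole argument on this cloud subtree then furnishes a cloud vertex whose combined cloud-neighbor and attached-branch count, after removal of the other singletons of $\MS'$, is at least $2$ (and at least $3$ whenever adjacent to a leaf). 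When $|J| > 1$, I would induct on $|J|$, loopifying one cloud at a time, to control interactions among the multiple new singletons and any pre-existing singletons of $\MS$.

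Finally, to establish $V_{\MC_J} \subseteq V_{M_{\MS'}}$ I would run a limit argument. In $M_{\MS'}$ each chosen $c_p^*$ is a loop while the remaining vertices of $C_p \setminus \{c_p^*\}$ are non-loops lying on the rank-$2$ flats of $M_{\MS'}$ corresponding to the branches they are attached to in $G$. Given a realization of $\MC_J$ (in which the entire cloud $C_p$ is zero), I would perturb each vertex $c \in C_p \setminus \{c_p^*\}$ to a small nonzero vector on its branch line, producing a realization of $M_{\MS'}$; shrinking these perturbations to zero recovers the $\MC_J$-realization as a limit, which by Zariski-closedness lies in $V_{M_{\MS'}}$. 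The hard part will be the primality verification above, where the careful degree-based choice of each $c_p^*$ and the inductive loopification are essential to handle the interactions among multiple loopified clouds and pre-existing singletons of $\MS$.
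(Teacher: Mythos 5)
Your reduction to the forest-like case via Proposition~\ref{prop: MS is forestlike for prime S} and Theorem~\ref{thm:forestlike_comp}, and your observation that the points in $S$ are exactly the clouds of $\MS$, are consistent with the paper. However, the central construction of $\MS'$ is wrong: replacing an entire loopified cloud $C_p$ by a \emph{single} singleton $\{c_p^*\}$ does not in general yield $M_{\MS'} \le N$. Dissolving a cloud removes the crossings that kept the branches attached to $C_p$ on separate lines, so in $M_{\MS'}$ several branches that lie on the same side of $c_p^*$ get merged into one long line, creating $3$-circuits that are \emph{independent} in $N$ (in $N$ the cloud elements are loops and the branches remain pairwise separated). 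Concretely, let $G$ be the path $c_0{-}c_1{-}c_2{-}c_3{-}c_4$ with a pendant path $c_i{-}a_i{-}b_i$ attached at each $c_i$, and $\MS=\{\{c_0,c_1\},\{c_1,c_2\},\{c_2,c_3\},\{c_3,c_4\}\}$, so $\{c_0,\dots,c_4\}$ is a cloud lying on $5$ lines, and $N$ is $M_\MS$ with this cloud loopified. For every choice $c_p^*\in\{c_0,\dots,c_4\}$, the proposed $\MS'=\{\{c_p^*\}\}$ leaves a maximal unblocked set containing at least three branch vertices from distinct branches (e.g.\ $\{a_2,a_3,a_4\}$ when $c_p^*=c_1$), which is a circuit of $M_{\MS'}$ but independent in $N$. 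Hence $M_{\MS'}\not\le N$, so $\Gamma_N\not\subseteq V_{M_{\MS'}}^{\comb}$ and the claimed containment $V_N\subseteq V_{M_{\MS'}}$ fails. The perturbation step fails for the same reason: a realization of $M_{\MS'}$ forces $a_2,a_3,a_4$ to be collinear, so no such realization can lie arbitrarily close to a generic realization of $\MC_J$ where these points are in general position.

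The paper avoids this by \emph{shrinking} each offending cloud iteratively rather than dissolving it in one step: at each step it picks a leaf $\ell$ of $G[C]$, makes its neighbour $x$ (which automatically has degree $\ge 3$ in $G'$) a singleton, and removes only the $2$-subsets containing $x$, while also trimming any other cloud $C'$ whose only escape was through $x$ in order to preserve primality. This leaves smaller residual clouds (eventually of size $\le 2$), and because every path that previously crossed $C$ through the removed portion now passes through the singleton $x$, the blocking structure is preserved; the only new dependency at each step is $\{x\}$, which is a loop of $N$. This is why the paper's procedure may produce \emph{several} singletons per loopified cloud (in the example above, $\MS'=\{\{c_1\},\{c_3\}\}$), whereas your one-singleton-per-cloud scheme is not enough.
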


\begin{proof}

Let $J$ be the ideal associated to the matroid $N$ and for notation we write $N$ as $M_J$. By Proposition~\ref{prop: MS is forestlike for prime S}, the simplification of $M_\MS$ is forest-like. So, by Theorem~\ref{thm:forestlike_comp},

the matroid $M_J$ is obtained from $M_\MS$ by setting all elements inside certain clouds of $\MS$ to be  loops, see Definition~\ref{def: setting points to loops}.  Denote these clouds by $C_1, \dots, C_s$. More precisely, the circuits of $M_J$ are the singleton subsets of 
$C_1\cup \cdots \cup C_s$ along with the 
collection of circuits of $M_\MS\backslash(C_1\cup \cdots \cup C_s)$. Note that $M_\MS < M_J$.

Let $(\MP,\ML, \mI)$ be the point and line configuration whose matroid is the simplification of $M_\MS$. By Theorem~\ref{thm:forestlike_closure}, if a non-central component $J$ of $I_{M_\MS}^{\comb}$ exists, there is a point in $\MP$ contained in at least three lines in $\ML$. This implies that a cloud of $\MS$ contains at least three elements.

Let $C$ be a cloud of $\MS$ that is not one of $C_1, \dots C_s$. Let $p_i\in\MP$ be the corresponding point in the simplification of $M_{\MS}$. 
We define the \emph{star of $p_i$} to be $S(p_i) =\bigcup_{\ell\in \mL_i} \ell$, where $\ell$ is identified with the set of points it passes through.  
By Lemma~\ref{lem:star_irred_comps},
we see that all generators of $I_{M_\MS}$ constructed using points in $S(p_i)$ are also generators of $J$. However, we also see 
that $I_{M_\MS}\nsubseteq J$ because the elements of $C_1\cup \cdots \cup C_s$ are loops of $M_J$. Therefore, it is enough to show that there exists a prime collection $\MS'$ such that:
\begin{itemize}
    \item $M_{\MS'} < M_J$ and
    \item every cloud of $\MS'$ contained in any of $C_1, \dots, C_s$ has size at most two.
\end{itemize}

We will construct $\MS'$ from $\MS$ by an inductive procedure modifying the clouds $C_1, \dots, C_s$. Let $\MS_0=\MS$. At each step of the procedure, we construct $\MS_{i+1}$ from $\MS_i$ by modifying a cloud $C$ of $\MS_i$ with $|C|\geq 3$ such that $C\subseteq C_1\cup\cdots\cup C_s$. By induction, we assume $\MS_i$ is a prime collection. At each step of this procedure, we construct a prime collection $\MS_{i+1}$ such that every set that is dependent in $M_{\MS_{i+1}}$ but not in $M_{\MS_i}$ is dependent in $M_J$. Since $M_\MS < M_J$, this implies that $M_{\MS'} < M_J$.

Let $G'$ be the induced subgraph of $G$ obtained by deleting the singletons in $\MS_i$. Let $G[C]$ be the induced subgraph of $G$ with vertex set $C$, and let $\ell$ be a leaf of $G[C]$. Since $|C| \ge 3$, the neighbor $x$ of $\ell$ in $G[C]$ is not a leaf of $G[C]$. In particular, $\deg(x) \ge 3$ in $G'$, so we can add $x$ to the collection of singletons of $\MS_i$ to result in a new prime collection of singletons. Therefore, we modify $\MS_i$ as follows. Make $x$ a singleton set in $\MS_{i+1}$ and remove all $2$-subsets in $\MS_i$ which contain $x$. We now consider the neighbors of $x$ which lie outside $C$. For each such neighbor $y$ of $x$, if $y$ belongs to a cloud $C'\neq C$ and $x$ is the only neighbor of $y$ not contained in $C'$, then we remove $y$ from the cloud $C'$. This ensures that the new collection is a prime collection. Let $\MS_{i+1}$ be this new prime collection. Note that this procedure does not add any new $3$-circuits because all paths crossing $C'$ that contain $y$ necessarily pass through $x$.

As a result of this procedure, the cloud $C$ no longer contains $x$ or $\ell$ and might also be split into multiple smaller clouds. Note that all other clouds affected by this procedure have decreased in size. Thus, the only subset of $[n]$ that is dependent in $\MS_{i+1}$ but not in $\MS_i$ is the singleton set $\{x\}$, which is a loop of $M_J$. Therefore, $M_{\MS_{i+1}} < M_J$ as desired. We apply this procedure inductively until we obtain a prime collection $\MS'$ such that every cloud of $\MS'$ contained in $C_1\cup\cdots\cup C_s$ has size $2$.
\end{proof}

\begin{theorem}
\label{thm:adj-tree-decomp}
Let $G$ be a forest and let $\Delta_G$ be the corresponding consecutive forest hypergraph. Then,
\[
V_{\Delta_G}=\bigcup_{\MS} V_{M_\MS},
\]
is an irredundant irreducible decomposition
of $V_{\Delta_G}$, where the union is taken over all prime collections $\MS$ of $G$. In particular, a minimal prime decomposition of $\sqrt{I_{\Delta_G}}$ is given as
$
\sqrt{I_{\Delta_G}} = \bigcap_{\MS} I_{M_\MS}.
$
\end{theorem}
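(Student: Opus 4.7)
The plan is to assemble the ingredients developed in the preceding sections: the combinatorial closure decomposition (Proposition~\ref{prop:comb_decomposition}), the identification of minimally dependent matroids (Theorem~\ref{thm:m3_globally_min}), the absorption of non-central components (Proposition~\ref{prop:prime_reduction}), the irreducibility of forest-like realization spaces (Corollary~\ref{thm:forestlike_irred}), and the rigidity of prime collections under the dependence order (Lemma~\ref{lem:m3_MS<MT_implies_MS=MT}). Once all these are in hand, the theorem is really a matter of combining them correctly.

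First I will establish the set-theoretic equality $V_{\Delta_G}=\bigcup_{\MS}V_{M_\MS}$. By Theorem~\ref{thm:m3_globally_min}, the minimally dependent matroids for $\Delta_G$ are precisely the matroids $M_\MS$ for prime collections $\MS$ of $G$. Each such matroid is realizable over $\CC$: by Proposition~\ref{prop: MS is forestlike for prime S}, the simplification $(M_\MS)^s$ is the matroid of a forest-like configuration, which is realizable by Proposition~\ref{prop:induct}, and loops (zero columns) and parallel elements (nonzero scalar multiples of the representative column) can then be adjoined without losing realizability. Proposition~\ref{prop:comb_decomposition} therefore yields $V_{\Delta_G}=\bigcup_{\MS}V_{M_\MS}^{\comb}$. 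Expanding each combinatorial closure into its central component $V_{M_\MS}$ and its non-central components, Proposition~\ref{prop:prime_reduction} shows that every non-central component is contained in $V_{M_{\MS'}}$ for some other prime collection $\MS'$, so these non-central components are absorbed by central components of other prime collections and we are left with $V_{\Delta_G}=\bigcup_\MS V_{M_\MS}$.

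Second, I will verify that each $V_{M_\MS}$ is irreducible. By Proposition~\ref{prop: MS is forestlike for prime S}, the simplification $(M_\MS)^s$ corresponds to a forest-like configuration, so $\Gamma_{(M_\MS)^s}$ is irreducible by Corollary~\ref{thm:forestlike_irred}. The realization space $\Gamma_{M_\MS}$ is the image of $\Gamma_{(M_\MS)^s}\times(\CC^*)^m$ under the polynomial map that inserts a zero column at each loop of $M_\MS$ and a scalar multiple of the representative column for each element parallel to a representative, where $m$ counts these extra parallel elements. Since the source is irreducible and the map is polynomial, $\Gamma_{M_\MS}$ is irreducible, and hence so is its Zariski closure $V_{M_\MS}$.

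Third, I will show the decomposition is irredundant and deduce the ideal statement. Suppose $V_{M_\MS}\subseteq V_{M_{\MS'}}$ for two prime collections $\MS,\MS'$. Choose any $X\in\Gamma_{M_\MS}$, so that the matroid represented by $X$ is exactly $M_\MS$. Since $X\in V_{M_{\MS'}}$ and $V_{M_{\MS'}}\subseteq V_{\MC(M_{\MS'})}$ (any realization of $M_{\MS'}$ makes its circuits dependent), every minor indexed by a circuit of $M_{\MS'}$ vanishes on $X$, so every circuit of $M_{\MS'}$ is a dependent set of $M_\MS$. Therefore $M_{\MS'}\le M_\MS$ in the dependence order, and Lemma~\ref{lem:m3_MS<MT_implies_MS=MT} forces $\MS=\MS'$. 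The ideal-theoretic statement follows because each $I_{M_\MS}$ is a radical ideal, so $I\big(\bigcup_\MS V_{M_\MS}\big)=\bigcap_\MS I_{M_\MS}$. The main obstacle in this strategy has already been discharged in Proposition~\ref{prop:prime_reduction} via the perturbation procedure; the rest is careful bookkeeping of the pieces.
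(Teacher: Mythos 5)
Your proposal is correct and follows essentially the same route as the paper's proof: start from Proposition~\ref{prop:comb_decomposition} combined with Theorem~\ref{thm:m3_globally_min} to write $V_{\Delta_G}$ as the union of combinatorial closures of the $M_\MS$, then absorb non-central components via Proposition~\ref{prop:prime_reduction}. The small departures are cosmetic: you establish realizability through Proposition~\ref{prop:induct} plus explicit adjoining of loops and parallel elements, whereas the paper invokes Theorem~\ref{thm:realizable} directly; you argue irreducibility of each $V_{M_\MS}$ by exhibiting $\Gamma_{M_\MS}$ as a polynomial image of $\Gamma_{(M_\MS)^s}\times(\CC^*)^m$ rather than reading it off from Theorem~\ref{thm:forestlike_comp}'s classification of the components of the combinatorial closure; and you make the irredundancy step fully explicit via Lemma~\ref{lem:m3_MS<MT_implies_MS=MT}, which the paper leaves as a consequence of the minimality statement of Theorem~\ref{thm:m3_globally_min}. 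All three variants are sound, and your explicit irredundancy argument is arguably a welcome addition.
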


\begin{proof}
By Theorem~\ref{thm:m3_globally_min}, the set of all minimal matroids of $\Delta_G$ is $\{M_\MS:\MS\text{ is a prime collection}\}$. It follows, by Proposition~\ref{prop:comb_decomposition}, that
$
V_{\Delta_G} = \bigcup_{\MS} V^{\comb}_{M_{\MS}}.
$
It is easy to see that, the simplification of the matroid $M_\MS$ is~a~forest-like 
configuration. So, the irreducible components of $V^{\comb}_{M_\MS}$ are in one-to-one correspondence with the irreducible components of the combinatorial closure of the matroid variety of this configuration, which are given by Theorem~\ref{thm:forestlike_comp}.
By Proposition~\ref{prop:prime_reduction}, for every non-central component $V_{M_J}$ of $V^{\comb}_{M_\MS}$, there exists a prime collection $\MS'$ such that $V_{M_J} \subseteq V_{M_\MS'}$. Moreover, by Theorem~\ref{thm:realizable}, each of the matroids $M_\MS$ is realizable. Hence, the irredundant irreducible decomposition of $V_{\Delta_G}$ is given by
$
V_{\Delta_G} = \bigcup_{\MS} V_{M_{\MS}}
$, 
as desired. 
\end{proof}

In the example below we show that the irreducible components of $V_{\Delta_G}$ are not necessarily positroid varieties. 
\begin{example}\label{example: non-positroid component E}
Let $G$ be the graph on the vertex set $\{ 1,2, \dots, 9\}$ and edge set $\{12, 23, 14, 45, 56, 47, 78,89\}$. Let $\MS = \{14, 47\}$ be a prime collection. By Remark~\ref{rem:flats}, the matroid $M_\MS$ has three rank $2$ flats given by $12347, 14567$ and $14789$. So the simplification of $M_\MS$ is the matroid of the configuration given by three lines passing through a point. See Example~\ref{ex:combthreelines}. In particular, the ideal of $M_\MS$ is not generated by determinants. So by \cite[Theorem~5.15]{knutson2013positroid}, the matroid $M_\MS$ is not a positroid.
\end{example}

\vspace{-3mm}

\subsection{Realizability.}\label{sec:forestlike_realizable}

We are now ready to consider the minimal matroids for $\Delta_G$. Recall that, if $\MS$ is a valid collection for $G$, then the matroid $M_\MS$ arising from Proposition~\ref{pro:MS} is minimal with respect to $\Delta_G$. Here, we show that $M_{\MS}$ is realizable over the real numbers hence realizable over $\CC$.

\begin{theorem}\label{thm:realizable}
If $\MS$ is a valid collection, then $M_{\MS}$ is $\mathbb{R}$-realizable.
\end{theorem}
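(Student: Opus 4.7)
The plan is to realize $M_\MS$ by first realizing its simplification $(M_\MS)^s$ and then lifting via standard matroid-theoretic operations. The key initial observation is that $M_\MS$ is obtained from $(M_\MS)^s$ by two operations, each of which preserves $\mathbb{R}$-realizability: (a) adding a loop for every singleton $\{s\} \in \MS$ (realized by the zero vector), and (b) adding $|C|-1$ parallel elements to the point $p_C$ of $(M_\MS)^s$ for every cloud $C$ of $\MS$ with $|C| \geq 2$ (each realized by the same vector as $p_C$, or any nonzero scalar multiple). Hence the problem reduces to showing that $(M_\MS)^s$ is $\mathbb{R}$-realizable.

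For the simplification, I will show that $(M_\MS)^s$ is the matroid of a forest-like point-and-line configuration; then $\mathbb{R}$-realizability follows immediately from Proposition~\ref{prop:induct}. This extends Proposition~\ref{prop: MS is forestlike for prime S}, which handles prime $\MS$, to arbitrary valid $\MS$. The argument proceeds by induction on $|\MS|$. The base case $\MS = \emptyset$ is handled by noting that the rank-$2$ flats of $(M_\emptyset)^s$ of size at least $3$ are exactly the connected components of $G$ of size at least $3$, so the associated graph $G_\MC$ is a disjoint union of paths, which is in particular a forest. In the inductive step, I will remove one element of $\MS$ to obtain a smaller valid collection $\MS'$, apply the induction hypothesis to get a forest-like configuration $\MC'$ for $(M_{\MS'})^s$, and then argue by cases (a singleton removed versus a $2$-subset removed) that the configuration $\MC$ for $(M_\MS)^s$ is obtained from $\MC'$ by a local modification that preserves forest-likeness: merging two points of $\MC'$ into a single parallel class, splitting a line through a point into a pencil of lines, or adding a free point. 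The freedom provided by Lemma~\ref{lem:ind-of-order} in the ordering of points on each line will be used to ensure the resulting configuration graph remains a forest.

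The main obstacle will be the inductive step when $\MS$ is not prime. In the prime setting, the removed element carries additional structure (a vertex of degree at least three in a suitable subgraph, or an edge of $G$) that tightly controls the local modification of $\MC'$. In the general valid case this control is lost: a $2$-subset in $\MS$ need not be an edge of $G$, so the cloud containing it may span non-adjacent vertices of $G$ and the change to the family of inclusion-maximal unblocked subsets is more subtle. The case analysis will need to track how these maximal unblocked subsets are reorganized by adding or removing the element, and will invoke Lemma~\ref{lem:ind-of-order} to reorder points along a line whenever a naive ordering would create a cycle in $G_\MC$. Once the simplification is realized over $\mathbb{R}$ via Proposition~\ref{prop:induct}, the loop and parallel extensions described above yield the desired realization of $M_\MS$ over $\mathbb{R}$.
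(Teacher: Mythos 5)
Your approach is genuinely different from the paper's, but it has a gap that you yourself flag and do not close. The paper does \emph{not} pass through the simplification or through forest-like configurations at all. Instead, it builds $M_{\MS}$ up directly one ground-set element at a time: starting from the set $L$ of loops (the $3\times |L|$ zero matrix), it adds each remaining vertex $v$ of $G$ and shows, by a three-way case analysis (is $v$ in a cloud with an existing vertex? does $v$ lie in an unblocked triple with two existing non-loop, non-coparallel vertices? neither?), that the restriction $M_{\MS}|(V(G_0)\cup\{v\})$ is obtained from $M_{\MS}|V(G_0)$ either by freely adding $v$ to a flat or by adding $v$ as a coloop. Then Lemmas~\ref{lem:extension} and~\ref{lem:coloop} finish each step. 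This is essentially the same machinery used to prove Proposition~\ref{prop:induct}, but applied straight to $M_{\MS}$, which absorbs loops and parallel elements into the same induction rather than peeling them off first.

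The gap in your plan is the claim that $(M_{\MS})^s$ is the matroid of a forest-like configuration for \emph{every} valid collection $\MS$, not just prime ones. Proposition~\ref{prop: MS is forestlike for prime S} is proved in the paper only for prime $\MS$, and its proof leans on structure that a general valid collection need not have: the $2$-subsets of a prime collection are edges of the subgraph $G'$, so each cloud is a connected subtree of $G$, which gives the tight local control you would need for the inductive step. For a merely valid $\MS$, a $2$-subset may join non-adjacent vertices, so a cloud may not be path-closed in $G$; then ``merging two points of $\MC'$ into a parallel class'' is no longer a local operation on the configuration graph and can, a priori, create cycles. You correctly identify this as the main obstacle, but the proposal stops short of resolving it -- the invocation of Lemma~\ref{lem:ind-of-order} to reorder points does not by itself handle the reorganization of maximal unblocked sets that a non-edge cloud causes. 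Until that case analysis is actually carried out, the proof is incomplete for non-prime valid $\MS$. (It is complete for prime $\MS$, which is the case used in Theorem~\ref{thm:adj-tree-decomp}, but the theorem as stated asserts more.) The paper's vertex-by-vertex induction avoids this issue entirely because it never needs to produce a global forest-like configuration; it only needs, at each step, to exhibit a single free or coloop extension.
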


\begin{proof}
Let $r$ be the rank function of $M_\MS$. The elements of $M_\MS$ are the vertices of the forest $G$. Let $L$ be the set of loops of $M_\MS$, so $L$ consists of the singletons in $\MS$. We will proceed inductively by considering subgraphs of $G$ with increasing numbers of vertices. For the base case, we begin with the set $L$ of loops of $M$. The restriction $M_\MS|L$ can be represented by the $3\times|L|$ zero matrix.

For the inductive step, suppose that we have shown that $M_\MS|V(G_0)$ is realizable over $\mathbb{R}$ for some subgraph $G_0$ of $G$ containing the vertices in $L$. Let $v\in V(G)\backslash V(G_0)$. We will show that the restriction $M_{\MS}|(V(G_0)\cup\{v\})$ is obtained from $M_{\MS}|(V(G_0))$ either by adding $v$ as a coloop or by freely adding $v$ to a flat. Thus, Lemma~\ref{lem:extension} or Lemma~\ref{lem:coloop} implies that $M_{\MS}|(V(G_0)\cup\{v\})$ is realizable over the reals. We consider the following cases.
\begin{itemize}
\item[(1)] There is a vertex $w\in V(G_0)$ such that $v$ and $w$ are in the same cloud.
\item[(2)] Case (1) does not hold, but there are vertices $w,x\in V(G_0)\backslash L$, not in the same cloud, such that $\{v,w,x\}$ is an unblocked set.
\item[(3)] Neither Case (1) nor Case (2) holds.
\end{itemize}

For Case (1), note that $C\cup L$ is a flat of rank $1$ for every cloud $C$. Therefore, if $C$ is the cloud containing $v$ and $w$, then we obtain $M_{\MS}|(V(G_0)\cup\{v\})$ by freely adding $v$ to the flat $C\cup L$.

In Case (2), the fact that $w$ and $x$ are not in the same cloud and the fact that neither $w$ nor $x$ is in $L$ imply that $r(\{w,x\})=2$. The fact that $\{v,w,x\}$ is an unblocked set implies that $r(\{v,w,x\})=2$ also. Thus, $v$ is in the closure of $\{w,x\}$ in the matroid $M_{\MS}$. Since Case (1) does not hold, $M_{\MS}|(V(G_0)\cup\{v\})$ is obtained by adding $v$ freely to the closure of $\{w,x\}$.

Now we consider Case (3). Since Case (1) does not hold, Proposition~\ref{pro:MS} implies that $M_\MS$ has no circuit of size $2$ consisting of $v$ and some vertex of $G_0$. Since Case (2) does not hold, Proposition~\ref{pro:MS} implies that $M_\MS$ has no circuit of size $3$ consisting of $v$ and two vertices of $G_0$. Therefore, the only circuits of $M_\MS$ containing $v$ and some vertex of $G_0$ are sets of size $4$. Thus, the matroid $M_{\MS}|(V(G_0)\cup\{v\})$ is obtained either by freely adding $v$ to the ground set $V(G_0)$ (if $r(V(G_0))=3$) or by adding $v$ as a coloop (if $r(V(G_0))<3$).
\end{proof}

\section{Hypergraphs \texorpdfstring{$\Delta^{s,t}$}{Dst}}
\label{sec:Delta_st_main}
Here, we study hypergraph ideals which arise naturally in the study of conditional independence statements. We begin with notation and the general setup of the problem. We denote the $k \times \ell$ matrix of integers
\begin{eqnarray}\label{eq:Y}
\MY = (\MY_{i,j})_{i,j} = 
\begin{bmatrix}
    1       & k + 1     & \dots     & (\ell - 1)k + 1 \\ 
    2       & k + 2     & \dots     & (\ell - 1)k + 2 \\
    \vdots  & \vdots    & \ddots    & \vdots \\
    k       & 2k        & \dots     & \ell k
\end{bmatrix} \ .
\end{eqnarray}
For each $i \in [k]$ and $j \in [\ell]$, the rows and columns of $\MY$ are denoted
\[
R_i = \{ \MY_{i,1}, \MY_{i,2}, \dots, \MY_{i, \ell} \} = \{i, k+i, \dots, (\ell - 1)k + i \},
\]
\[
C_j = \{ \MY_{1,j}, \MY_{2,j}, \dots, \MY_{k,j}\} = \{(j-1)k + 1, (j-1)k + 2, \dots, (j-1)k + k \}.
\]
For each $s$ and $t$ with $s \le k$ and $t \le \ell$, we define $\Delta^{s,t}$ be the following collection of subsets of $[k\ell]$, 
\[
\Delta^{s,t} = \bigcup_{1 \le i \le k} \binom{R_i}{t} \cup \bigcup_{1 \le j \le \ell} \binom{C_j}{s}.
\]
If the values of $s$ and $t$ have been fixed, then we simply write $\Delta$ for $\Delta^{s,t}$. We are interested in studying the ideals $I_{\Delta^{s,t}}$ for $d \ge \max\{s,t\}$ as these are examples of conditional independence ideals with hidden variables.

\begin{example}
\label{exa:k2l4s2t3}
Let $k = 4, \ell = 7, s = 2$ and $t = 3$. We have
\[
\MY = 
\begin{bmatrix}
1 & 5 & 9  & 13 & 17 & 21 & 25 \\
2 & 6 & 10 & 14 & 18 & 22 & 26 \\
3 & 7 & 11 & 15 & 19 & 23 & 27 \\
4 & 8 & 12 & 16 & 20 & 24 & 28
\end{bmatrix}
\quad \textrm{and} \quad
\Delta^{2,3} = \left\{ 
\binom{\{1,5,9,13,17,21,25\}}{3} 
\cup
\dots 
\cup
\binom{\{25,26,27,28 \}}{2}
\right\}.
\]
Calculating the dependent matroids for $\Delta^{2,3}$, we find that all such matroids are point and line configurations. In Table~\ref{tab:combinatorial_types_kl}, there are $10$ combinatorial types of configurations which appear as these matroids. Explicitly, these are the point and line configurations which have at most $4$ lines and at most $7$ points.
\begin{table}[h]
    \centering
    \begin{tabular}{c|ccccccccccccc}
    \toprule
        \diagbox{$k$}{$\ell$} & $3$ & $4$ & $5$ & $6$ & $7$ & $8$ & $9$ & $10$ & $11$ & $12$ & $\cdots$ & $\cdots$ & $\infty$ \\
        \midrule
        $2$ & 
        $2$ & $2$ & $3$ & $\textbf{4}$ & $4$ & $4$ & $4$ & $4$ & $4$ & $4$ & $4$ & $4$ & $4$ \\
        $3$ & 
        $2$ & $2$ & $3$ & $5$ & $7$ & $8$ & $\textbf{9}$ & $9$ & $9$ & $9$ & $9$ & $9$ & $9$ \\
        $4$ & 
        $2$ & $2$ & $3$ & $6$ & $10$ & $13$ & $20$ & $23$ & $24$ & $\textbf{25}$ & $25$ & $25$ & $25$ \\
        \bottomrule
    \end{tabular}
    \caption{The number of \textit{combinatorial types} of configurations appearing among dependent matroids for $\Delta^{2,3}$. Note that increasing $\ell$ leads to having arbitrarily many irreducible components for the variety $V_{\Delta^{2,3}}$. 
     }
    \label{tab:combinatorial_types_kl}
\end{table}
\end{example}

\begin{remark}\label{rem:Known_Results}
The minimal prime decomposition of $I_{\Delta^{s,t}}$ has been extensively studied in \cite{herzog2010binomial, Rauh, clarke2020conditional, pfister2019primary}. 
In each case, we find a matroidal description of the prime components. In Table~\ref{tab:min_dep_examples} we give a unified perspective on these results where the minimally dependent matroids can be uniquely identified by their loops.
In the case $k = s = 2$, it is possible to generalize the prime components of $I_{\Delta^{s,t}}$, described in \cite{ollie_fatemeh_harshit} for the $t = 3$, to all $t \ge 3$. A complete description of this can be found in our forthcoming work. 
In this case the minimally dependent matroids are given by configurations of points lying in $(t-1)$-dimensional affine subspaces. When $s = 2$, $t = 3$ and $k,\ell$ are arbitrary, the dependent matroids for $\Delta^{s,t}$ are given by configurations with at most $k$ points and $\ell$ lines. The specific case with $d = k = s = t = 3$ and $\ell = 4$ is given in Example~\ref{ex:andreas}. The ideal $I_{\Delta^{s,t}}$ has two components both of which are matroid varieties corresponding to the configurations in Figure~\ref{threelines}.

\begin{table}[h]
    \centering
    \begin{tabular}{cp{0.55\linewidth}p{0.25\linewidth}}
        \toprule
         $(k,\ell,s,t)$ & Minimally dependent matroids $M_{\MS}$ for $\Delta^{s,t}$ & Simplification \\
        \midrule
        $(k, \ell, 2, 2)$ & 
        The parallel classes of $M_{\MS}$ are the connected components of $\Delta^{s,t}\backslash \MS$. $M_{\MS}$ is minimally dependent if for any $\MT \subseteq \MS$, there exists a parallel class of $M_{\MT}$ which contains at least two distinct parallel classes of $M_{\MS}$.
        &
        $M_\MS$: Uniform matroid on the ground set of parallel classes of $M_\MS$.
        \\
        \midrule
        $(k, \ell, 2, \ell)$ &
        $\MS = \emptyset$: $M_{\emptyset}$ is the uniform matroid on $[k\ell]$ of rank $\ell-1$. \newline
        $\MS \neq \emptyset$:  
        $M_{\MS}$ is minimally dependent if $|\MS \cap R_i| = 1$ for each row $R_i$ and $\MS \cap C_j \neq \emptyset$ for at least two distinct columns $C_j$ of $\MY$. The parallel classes of $M_{\MS}$ are $C_j \backslash \MS$ for each column $C_j$.
        &
        $M_\emptyset$: Already simplified.\newline
        $M_{\MS}$: Uniform matroid on the ground set of parallel classes of $M_{\MS}$.
        \\
        \midrule
        $(2, \ell, 2, 3)$ & 
         $\MS:$ 
         a \textit{minimal set} from \cite[Definition~3.15]{ollie_fatemeh_harshit}. $M_{\MS}$ is the matroid of a point and line configuration. The parallel classes of $M_{\MS}$ are: $\MC = \bigcup C_j$ where the union is taken over all columns $C_j$ of $\MY$ such that $C_j \cap \MS = \emptyset$, and the sets $C_j \backslash \MS$ where $C_j \cap \MS \neq \emptyset$. The circuits of size $3$ in $M_{\MS}$ are given by $3$-subsets of the rows $R_1\backslash\MS$ and $R_2\backslash\MS$ respectively.
         & 
        $M_{\MS}$: Matroid of a point and line configuration with at most two lines intersecting at $\MC$ (possibly empty set). 
        \\
        \bottomrule
    \end{tabular}
    \caption{The minimally dependent matroids for $\Delta^{s,t}$, which are uniquely determined by their loops $\MS \subseteq [k\ell]$.}
    \label{tab:min_dep_examples}
\end{table}\end{remark}

\begin{example}
Let $s = 2$ and $t = 3$. For each $2 \le k \le 4$ and $3 \le \ell$ we calculate the number of possible combinatorial types of points and line configurations which appear among dependent matroids for $\Delta^{s,t}$. These calculations are displayed in Table~\ref{tab:combinatorial_types_kl}. Note that with $12$ points, we observe all combinatorial types of configurations with at most $4$ lines. So, in order to determine the components of the hypergraph variety $V_{\Delta^{s,t}}$, we need only show that a finite number of point and line configurations have irreducible varieties.
\end{example}

\subsection{Grid matroids of small rank.}

Although the minimal matroids for $\Delta^{s,t}$ are known for some specific values of $s$ and $t$, it seems difficult to determine the minimal matroids for arbitrary values of $s$ and $t$. However, there is no difficulty if $d$ is sufficiently small. We first recall the notion of affine matroids.
\begin{definition}
\label{def:affine}
Let $\mathbb{F}$ be a field, and let $S=\{v_1,\ldots,v_k\}$ be a collection of (not necessarily distinct) vectors in $\mathbb{F}^{d-1}$. Let $v_i'\in\mathbb{F}^d$ be the vector whose first coordinate is $1$ and whose other coordinates are those of $v_i$. 
\begin{itemize}
    \item The collection $S$ is \emph{affinely dependent} if $k>0$ and there are elements $a_1,\ldots,a_k\in\mathbb{F}$ that are not all $0$ with $\sum_{i=1}^k a_iv_i={\bf 0}$
 and $\sum_{i=1}^k a_i=0$.
 This is equivalent to the condition that $\{v_1',\ldots,v_k'\}$ is linearly dependent. 
 \item
 A matroid $M$ on the set $[n]$ is \emph{affine} over the field $\mathbb{F}$ if there is a function $\phi:[n]\rightarrow\mathbb{F}^{d-1}$ such that $X\subseteq[n]$ is an independent set in $M$ if and only if $\phi(X)$ is affinely independent. In this case, $M$ has rank at most $d$ and can be realized by a $d\times n$ matrix whose columns are $v_1',\ldots,v_n'$. An affine matroid must be loopless, and it is simple if and only if $\phi$ is injective. If $F$ is a flat of $M$ of rank $t$, then there is a $(t-1)$-dimensional affine subspace of $\mathbb{F}^{d-1}$ whose intersection with $[n]$ is $\phi(F)$.
 \end{itemize}
 \end{definition}

\begin{theorem}\label{thm:s-t-3}
Let $s,t,k,\ell,d$ be positive integers such that $3\leq s\leq t\leq\ell$, $s\leq k$, and $t\leq d\leq s+t-3$. Then $\MC=\min(\Delta^{s,t}\cup\binom{[k\ell]}{d+1})$ is the collection of circuits of an $\mathbb{R}$-realizable matroid on $[k\ell]$ of rank $d$. This is the unique minimal matroid for $\Delta^{s,t}$ in this case.
\end{theorem}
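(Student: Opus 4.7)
The plan is to prove the theorem in three stages: (a) show that $\MC$ is the circuit set of a matroid $M$ on $[k\ell]$; (b) show that $M$ is $\mathbb{R}$-realizable of rank $d$; and (c) conclude uniqueness. Throughout, it is convenient to describe the putative family of independent sets of $M$ as
\[
\mathcal{I} = \bigl\{ I \subseteq [k\ell] : |I| \le d,\ |I \cap R_i| \le t-1 \text{ for all } i,\ |I \cap C_j| \le s-1 \text{ for all } j \bigr\}.
\]

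For (a), axioms (I1) and (I2) are immediate, so the content lies in the exchange axiom (I3). Suppose $A, B \in \mathcal{I}$ with $|A| < |B|$ and no $x \in B \setminus A$ extends $A$. Call a row \emph{full in $A$} if $|A \cap R_i| = t-1$ and similarly for columns; then every $x \in B \setminus A$ lies in a full row or full column of $A$. Writing $r, c$ for the numbers of full rows and columns and $o$ for the number of cells of $A$ lying in both a full row and a full column, a double count yields $|B| \le |A| + o \le |A| + rc$, forcing $r, c \ge 1$, together with $r(t-1) + c(s-1) - rc \le |A| \le d \le s+t-3$. A small case analysis using $s, t \ge 3$ produces a contradiction in every case: $(r,c)=(1,1)$ forces $|A| \ge s+t-3 = d$, contradicting $|A| < |B| \le d$; $(2,1)$ and $(1,2)$ force $t \le 2$ and $s \le 2$ respectively; and any $(r,c)$ with $r, c \ge 2$ forces $s + t \le 5$. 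Hence $\mathcal{I}$ is the family of independent sets of a matroid $M$, and its circuits are routinely identified with $\MC$ by checking that the only minimal dependent sets are $t$-subsets of rows, $s$-subsets of columns, and $(d+1)$-subsets whose row and column intersections are bounded by $t-1$ and $s-1$.

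For (b), the rank of $M$ is $d$: a $d$-element set in $\mathcal{I}$ exists by the Gale--Ryser theorem applied to the bounds $d \le k(t-1)$ and $d \le \ell(s-1)$, both of which follow from $s, t \ge 3$ and $d \le s+t-3$. To realize $M$ over $\mathbb{R}$, pick subspaces $U_1, \ldots, U_k \subset \mathbb{R}^d$ of dimension $t-1$ and $V_1, \ldots, V_\ell \subset \mathbb{R}^d$ of dimension $s-1$ generically; the hypothesis $d \le s+t-3$ forces $\dim(U_i \cap V_j) \ge s+t-2-d \ge 1$, so a nonzero vector $v_{ij} \in U_i \cap V_j$ may be chosen for each $(i,j)$. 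Since $R_i \subseteq U_i$ and $C_j \subseteq V_j$, every $t$-subset of a row and every $s$-subset of a column is linearly dependent, and for generic choices each row attains rank exactly $t-1$ and each column rank exactly $s-1$. The main technical point is that no further dependencies arise: for each admissible $F \in \mathcal{I}$ of size $d$, the vectors $\{v_{ij}\}_{(i,j) \in F}$ must be linearly independent. This is a Zariski-open condition on the parameter space of $(U_i, V_j, v_{ij})$; I will establish non-vacuousness by an inductive element-by-element construction in the spirit of Lemma~\ref{lem:extension}, placing each new $v_{ij}$ within $U_i \cap V_j$ outside the span of the previously placed vectors, and invoking the matroid structure of $M$ to guarantee sufficient freedom at each step. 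This is the principal obstacle of the proof, since the hypothesis $d \le s+t-3$ is sharp: it provides precisely the dimension $\dim(U_i \cap V_j) \ge 1$ of freedom needed to place each $v_{ij}$ inside the prescribed row and column flats, and any larger $d$ would collapse these intersections to zero and destroy the construction.

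For (c), let $M'$ be any matroid with $\Delta^{s,t} \subseteq \MD(M')$ and $\rank(M') \le d$. Every $t$-subset of a row and every $s$-subset of a column is dependent in $M'$ (being in $\Delta^{s,t}$), and every $(d+1)$-subset is dependent by the rank bound. Thus every element of $\MC$ is dependent in $M'$, so $\MD(M) \subseteq \MD(M')$, i.e., $M \le M'$ in the dependency order. Since $\Delta^{s,t} \subseteq \MD(M)$ by construction, minimality of $M'$ forces $M' = M$, which establishes uniqueness.
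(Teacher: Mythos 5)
Your argument splits into three parts, and they fare differently against the paper's proof.

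Part (a) is a genuinely different (and correct) route. The paper asserts that circuit elimination ``can be checked fairly easily'' but does not do so, because it derives the matroid structure as a byproduct of the realization. You instead verify the independence exchange axiom directly by the counting argument $r(t-1)+c(s-1)-rc\le|A|<d\le s+t-3$; I checked the bound $|B|\le|A|+o$ and the case analysis on $(r,c)$, and they are sound. This is a nice self-contained alternative which also makes clear why the hypothesis $d\le s+t-3$ is sharp (it is exactly what rules out $(r,c)=(1,1)$). Part (c) is correct and matches the paper.

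Part (b) has a genuine gap, and it is exactly at the step you flag as ``the principal obstacle.'' The plan is to establish non-emptiness of the Zariski-open realization locus by placing the vectors $v_{ij}$ one at a time inside $U_i\cap V_j$, ``outside the span of the previously placed vectors,'' invoking Lemma~\ref{lem:extension}. This does not work, for two intertwined reasons. First, Lemma~\ref{lem:extension} concerns freely adding an element to a flat of the current matroid, but the constraint $v_{ij}\in U_i\cap V_j$ is not a flat constraint of the partial matroid: once $R_i\cap X$ and $C_j\cap X$ are placed, the flat $\mathrm{cl}(R_i\cap X)\cap\mathrm{cl}(C_j\cap X)$ of $M|_X$ consists only of loops (since $R_i\cap C_j=\{(i,j)\}\notin X$), whereas the subspace $U_i\cap V_j$ has dimension $s+t-2-d\ge 1$. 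The matroid simply does not ``see'' the subspace you need to land in. Second, in the boundary case $d=s+t-3$, which you correctly identify as sharp, $\dim(U_i\cap V_j)=1$, so once the subspaces are fixed the vector $v_{ij}$ is determined up to scalar; there is no freedom left to ``place it outside the span of the previously placed vectors.'' Avoiding bad dependencies therefore requires varying the subspaces $U_i,V_j$ themselves, not just the vectors inside them, and this demands a different inductive structure. The paper's proof does exactly this: it fixes the row subspaces in general position and then chooses the collections of hyperplanes $\mathcal{H}_{C_1},\dots,\mathcal{H}_{C_\ell}$ defining the column subspaces one at a time, picking $\mathcal{H}_{C_j}$ so that each intersection with $\mathcal{H}_{R_i}$ has the correct rank $s+t-d-2$ and avoids the finitely many ``bad'' subspaces spanned by previously placed points. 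Your proof needs to be restructured along those lines (column-by-column, not element-by-element) for the realizability claim to go through.
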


\begin{proof}
One can check fairly easily that $\mathcal{C}$ satisfies circuit elimination and therefore is the collection of circuits of a matroid. However, this will be unnecessary because we will prove this theorem by constructing a realization for an affine matroid over $\mathbb{R}$ whose circuits are $\mathcal{C}$. Because $\mathcal{C}$ is itself the collection of circuits of a matroid $M$, there can be no matroid $N\neq M$ with rank at most $d$ and ground set $[k\ell]$ such that $\Delta^{s,t}\subseteq\MD(N)\subseteq\MD(M)$. Thus, $M$ is the unique minimal matroid for $\Delta^{s,t}$ with $d\leq s+t-3$.

To construct a realization for an affine matroid over $\mathbb{R}$ whose circuits are $\mathcal{C}$, we must define a function $\phi:[k\ell]\rightarrow \mathbb{R}^{d-1}$.
In such a matroid, each $R_i$ must be a flat of rank $t-1$ and each $C_j$ must be a flat of rank $s-1$.
A flat of rank $d-m$ in an affine matroid over $\mathbb{R}$ corresponds to an affine subspace of $\mathbb{R}^{d-1}$ of dimension $d-m-1$. For each such subspace, there are $m$ distinct affine hyperplanes in $\mathbb{R}^{d-1}$ whose intersection is the subspace. Therefore, each $\phi(R_i)$ must be defined by the intersection of a collection $\mathcal{H}_{R_i}$ of $d-t+1$ affine hyperplanes. These hyperplanes are defined by the following equations, with the matrix $A_i=[a_{p,q}]$ having full row rank.
\[\begin{array}{lccclcl}
 a_{1,1}x_1 & +& \cdots& + &a_{1,d-1}x_{d-1} & = & c_1\\
&&&&&\vdots&\\
 a_{d-t+1,1}x_1 & +& \cdots &+ &a_{d-t+1,d-t+1}x_{d-1} & = & c_{d-t+1} \\
\end{array}\]

We choose $k$ such collections of hyperplanes $\mathcal{H}_{R_1}, \mathcal{H}_{R_2},\ldots,\mathcal{H}_{R_k}$ such that the rank-$(t-1)$ subspaces they define are in ``general position''. That is, for every $n<t$, the intersection of every collection of $n$ such subspaces is a subspace of rank $t-n$.
Similarly, each $\phi(C_j)$ must be defined by the intersection of a collection $\mathcal{H}_{C_j}$ of $d-s+1$ hyperplanes defined by the following equations, with the matrix $B_j=[b_{p,q}]$ having full row rank.  
\[\begin{array}{lccclcl}
 b_{1,1}x_1 & +&\cdots&+ &b_{1,d-1}x_{d-1} & = & c_{d-t+2}\\
&&&&&\vdots&\\
 b_{d-s+1,1}x_1 & +&\cdots&+ &b_{d-s+1,r-1}x_{d-1} & = & c_{2d-s-t+2} \\
\end{array}\]

Choosing the subspaces defined by $\mathcal{H}_{R_1}, \mathcal{H}_{R_2},\ldots,\mathcal{H}_{R_k},\mathcal{H}_{C_1},\mathcal{H}_{C_2},\ldots,\mathcal{H}_{C_\ell}$ will ensure that every subset of $\phi([k\ell])$ that should be affinely dependent is indeed so. However, we must also ensure that every subset of $\phi([k\ell])$ that should be affinely independent is so. That is, for every subset $X$ with $|X|\leq d$ such that no  $t$-subset of any $R_i$ or $s$-subset of any $C_j$  is contained in $X$, $\phi(X)$ must be affinely independent. To do this, we will choose the collections $\mathcal{H}_{C_1},\mathcal{H}_{C_2},\ldots,\mathcal{H}_{C_\ell}$ successively. Suppose we have chosen the collections $\mathcal{H}_{C_1},\ldots,\mathcal{H}_{C_{j-1}}$ and now must choose $\mathcal{H}_{C_j}$. If $X$ is a subset of $C_1\cup\dots\cup C_j$ such that $\phi(X)$ is to be affinely independent, then $\phi(X-C_j)$ is a basis for a subspace $S$ of $\mathbb{R}^{d-1}$. We must choose $\mathcal{H}_{C_j}$ so that, for each $i\leq k$, the intersection of all hyperplanes in $\mathcal{H}_{R_i}\cup\mathcal{H}_{C_j}$ is a subspace of rank $d-(d-s+1)-(d-t+1)=d-(2d-s-t+2)=s+t-d-2$ that avoids all such subspaces $S$.

The requirement that this subspace has rank $s+t-d-2$ is equivalent to the matrix $\left[\begin{array}{c}
A_i\\
\hline
B_j\\
\end{array}\right]$ having full row rank. This subspace must be nonempty since $R_i\cap C_j\neq\emptyset$. Thus, we must have $s+t-d-2\geq1$, which is true because $d\leq s+t-3$. The requirement of avoiding all subspaces defined by independent sets can be achieved as there are only finitely many such subspaces to avoid but infinitely many collections to choose to be $\mathcal{H}_{C_j}$.
\end{proof}

\subsection{General grid matroids.}
\label{sec:general}
Given a collection $\mathcal{D}$ of subsets of a ground set $E$, we wish to find the matroids that are minimally dependent for $\mathcal{D}$. However, for our purposes here, it is necessary to find the matroids minimally dependent for $\mathcal{D}$ among the class of realizable matroids.

Some work toward these ideas has been done by Mart\'{i}-Farr\'{e} \cite{M14}. Below, we recall the algorithmic procedure used in \cite{M14} to obtain minimally dependent matroids. This algorithm is not quite completely satisfactory from our perspective for two reasons. First, the algorithm does not take realizability into account at all. Mart\'{i}-Farr\'{e} and de Mier worked with realizability in \cite{MM15}, but as they said there, ``The problems under consideration are far from being solved." Second, although the algorithm is guaranteed to give all of the minimally dependent matroids, it may also give additional matroids that are not minimally dependent. One must compare the matroids given by the algorithm to determine which ones are minimally dependent.

In fact, we will use the algorithm of Mart\'{i}-Farr\'{e} to prove a result that illustrates the difficulties of determining the minimal matroids for $\Delta^{s,t}$ in general. First, we recall some terminology and notation, much of it coming from \cite{M14}. A \emph{clutter} on a set $\Omega$ is a collection of subsets of $\Omega$ such that no set is contained in another. (The term \emph{clutter} is another word for what we have been calling a \emph{simple hypergraph}.) If $\Lambda$ is a clutter on $\Omega$, let $\Lambda^+=\{A\subseteq\Omega:A_0\subseteq A\textnormal{ for some }A_0\in\Lambda\}$. If $\Upsilon$ is any collection of subsets of $\Omega$, then recall from Section~\ref{sec:pre} that $\min(\Upsilon)$ denotes the clutter of inclusion-wise minimal sets in $\Upsilon$. For $B\subseteq \Omega$, define \[I_{\Lambda}(B)=\bigcap_{A\in\Lambda,A\subseteq B} A.\] It follows from the circuit elimination axiom that $\Lambda$ is the collection of circuits of a matroid if and only if $I_{\Lambda}(A_1\cup A_2)=\emptyset$ for all $A_1,A_2\in\Lambda$ with $A_1\neq A_2$. 
To describe the algorithm, we use the notation of Mart\'{i}-Farr\'{e} and de Mier in \cite{MM17}. Let $\Lambda$ be a clutter with $A_1,A_2\in\Lambda$. An $\alpha_1$-transformation of $\Lambda$ is 
\[
\alpha_1(\Lambda;A_1,A_2)=\begin{cases}
                  \min(\Lambda\cup\{A_1\cap A_2\})&\textnormal{if } I_{\Lambda}(A_1\cup A_2)\neq\emptyset,\\
                  \Lambda&\textnormal{otherwise.}\\
\end{cases}
\]
The $\alpha_2$- and $\alpha_3$-transformations of $\Lambda$ are
\[\alpha_2(\Lambda)=\min(\Lambda\cup\{(A_1\cup A_2)\backslash\{x\}:A_1,A_2\in\Lambda,A_1\neq A_2,x\in A_1\cap A_2\})\textnormal{ and}
\]
\[\alpha_3(\Lambda)=\min(\Lambda\cup\{(A_1\cup A_2)\backslash I_{\Lambda}(A_1\cup A_2):A_1,A_2\in\Lambda,A_1\neq A_2\}).
\]
The following result is proved in \cite[Theorem 13]{M14}.

\begin{theorem}
Let $\Lambda\neq\{\emptyset\}$ be a clutter on a finite set $\Omega$ and let $M$ be a minimally dependent matroid for $\Lambda$ whose collection of circuits is $\mathcal{C}(M)$. There is a sequence of clutters $\Lambda=\Lambda_0,\Lambda_1,\dots,\Lambda_r=\mathcal{C}(M)$ such that, for each $i\geq1$, we have $\Lambda_{i-1}^+\subsetneq\Lambda_i^+$ and such that $\Lambda_i$ is either an $\alpha_1$-, $\alpha_2$-, or $\alpha_3$-transformation of $\Lambda_{i-1}$.
\end{theorem}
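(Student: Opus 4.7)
The approach is induction on $|\MD(M) \setminus \Lambda^+|$, the number of dependent sets of $M$ not already covered by the upper set generated by $\Lambda$. For the base case $\Lambda^+ = \MD(M)$, taking $\min(\cdot)$ of both sides gives $\Lambda = \MC(M)$, and the empty sequence suffices.

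For the inductive step, assume $\Lambda^+ \subsetneq \MD(M)$, so $\Lambda \neq \MC(M)$. The first key observation is that $\Lambda$ cannot itself be the circuit collection of any matroid: were $\Lambda = \MC(N)$ for some matroid $N$, then $\MD(N) = \Lambda^+$ would satisfy $\Lambda \le N$ with $\MD(N) \subsetneq \MD(M)$, contradicting the minimality of $M$. By the circuit-elimination characterization recalled just before the statement, there therefore exist distinct $A_1, A_2 \in \Lambda$ with $I_{\Lambda}(A_1 \cup A_2) \neq \emptyset$. The goal is then to produce, for a suitable pair, a transformation of one of the three types yielding a clutter $\Lambda'$ with $\Lambda' \subseteq \MD(M)$ and $\Lambda^+ \subsetneq \Lambda'^+$. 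The induction hypothesis applies to $\Lambda'$ with the same $M$, since $M$ remains minimally dependent for any clutter squeezed between $\Lambda$ and $\MC(M)$, and concatenating the resulting sequence gives the desired chain.

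To select the transformation I would perform a case analysis on the circuits of $M$ inside $A_1 \cup A_2$. If some pair admits $A_1 \cap A_2 \in \MD(M)$ with $A_1 \cap A_2 \notin \Lambda^+$, then $\alpha_1(\Lambda; A_1, A_2) = \min(\Lambda \cup \{A_1 \cap A_2\})$ strictly enlarges $\Lambda^+$ by inserting the new minimal dependent set $A_1 \cap A_2$ while remaining inside $\MD(M)$. Otherwise one fixes circuits $C_i \subseteq A_i$ of $M$ and invokes the strong circuit elimination axiom: when $C_1 \neq C_2$ share an element $x$, elimination produces a circuit of $M$ inside $(C_1 \cup C_2) \setminus \{x\} \subseteq (A_1 \cup A_2) \setminus \{x\}$, so the relevant set introduced by the $\alpha_2$-transformation lies in $\MD(M)$; when $C_1 = C_2 \subseteq A_1 \cap A_2$, one uses the closure operator of $M$ to locate auxiliary circuits inside $A_1 \cup A_2$ and argues that either an $\alpha_2$- or an $\alpha_3$-step introduces a new minimal element of $\MD(M) \setminus \Lambda^+$.

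The hardest part will be this case analysis, because the three transformations are defined purely combinatorially from $\Lambda$ and do not automatically preserve dependency with respect to the fixed matroid $M$. For example, an $\alpha_2$-step can manufacture a set that is independent in $M$ when $A_1$ and $A_2$ share a common circuit of $M$ inside $A_1 \cap A_2$ but no circuit of $M$ meets the symmetric difference $A_1 \triangle A_2$. Hence the argument must exploit the specific matroidal data of $M$ to choose the transformation adaptively at each stage, and verifying that the three families $\alpha_1, \alpha_2, \alpha_3$ together always suffice to advance (while strictly enlarging $\Lambda^+$ and remaining inside $\MD(M)$) is the essential content of the theorem.
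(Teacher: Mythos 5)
This theorem is cited in the paper as \cite[Theorem~13]{M14} (Mart\'{i}-Farr\'{e}, ``From clutters to matroids''); the paper does not provide a proof of its own, so there is no in-paper argument to compare against. Your proposal is therefore an attempt at an original proof.

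The inductive framing is a reasonable starting point: induct on $|\MD(M)\setminus\Lambda^+|$, argue that when $\Lambda\neq\MC(M)$ the clutter $\Lambda$ cannot be the circuit collection of any matroid (else minimality of $M$ is violated), conclude that $I_\Lambda(A_1\cup A_2)\neq\emptyset$ for some pair, and seek a single transformation that strictly enlarges $\Lambda^+$ while staying inside $\MD(M)$. The observation that $M$ remains minimally dependent for any $\Lambda'$ with $\Lambda^+\subseteq\Lambda'^+\subseteq\MD(M)$ is correct and justifies reusing the induction hypothesis.

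However, there is a genuine gap, and you have named it yourself without closing it: the entire substance of the theorem is the verification that \emph{some} $\alpha_1$-, $\alpha_2$-, or $\alpha_3$-step always advances while preserving $\Lambda_i\subseteq\MD(M)$, and your proposal does not prove this. It is not merely ``the hardest part'' — it is the theorem. Concretely, $\alpha_2(\Lambda)$ and $\alpha_3(\Lambda)$ are \emph{global} transformations adjoining $(A_1\cup A_2)\setminus\{x\}$ (resp.\ $(A_1\cup A_2)\setminus I_\Lambda(A_1\cup A_2)$) over \emph{all} eligible pairs, not over a single pair you control; a single bad pair (for instance $A_1,A_2$ whose only circuit of $M$ lies in $A_1\cap A_2$ and contains the deleted $x$) introduces a set independent in $M$ and breaks the invariant $\Lambda_{i}^+\subseteq\MD(M)$ — which the strictly increasing chain condition combined with $\Lambda_r^+=\MD(M)$ forces to hold at every stage. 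Likewise $\alpha_1(\Lambda;A_1,A_2)$ requires $A_1\cap A_2\in\MD(M)$ (not automatic) and $I_\Lambda(A_1\cup A_2)\neq\emptyset$ simultaneously. The ``adaptive choice'' you invoke, and the claim that such a choice is always available, are asserted but not established; nothing in your sketch rules out the possibility that every applicable $\alpha_1$-step is trivial or exits $\MD(M)$ while $\alpha_2$ and $\alpha_3$ both exit $\MD(M)$. Until that dichotomy is resolved the induction cannot advance, so the proof as written does not go through.
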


So, in order to obtain all minimally dependent matroids for a clutter, it suffices to perform every possible combination of $\alpha_1$-, $\alpha_2$-, and $\alpha_3$-transformations on the clutter. Then, one must compare the resulting matroids and discard any matroid whose collection of dependent sets strictly contains the collection of dependent sets of another matroid obtained from the algorithm.
Although the elements of the ground set of $\Delta^{s,t}$ are the integers $1,2,\dots,\ell k$, it will be convenient to think of each element as an ordered pair $(i,j)$, where $(i,j)$ is the unique element of $R_i\cap C_j$. In the remainder of this section, we show that every matroid can be obtained as a restriction of a matroid obtained from some $\Delta^{s,t}$ by $\alpha_1$- and $\alpha_2$-transformations. We prove the following result.

\begin{theorem}
\label{thm:hardness}
Let $M$ be a matroid on ground set $[n]$. There are positive integers $s,t,k,\ell$ and a sequence of clutters $\Delta^{s,t}=\Lambda_0,\Lambda_1,\dots,\Lambda_{r}$ on the set $[k\ell]$ such that, for each positive integer $i\leq r$, $\Lambda_i$ is either an $\alpha_1$- or an $\alpha_2$-transformation of $\Lambda_{i-1}$ and $\Lambda_{r}$ is the collection of circuits of a matroid with a restriction isomorphic to $M$.
\end{theorem}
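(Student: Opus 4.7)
The plan is to invoke the Martí-Farré algorithm directly: starting from $\Delta^{s,t}$ for suitably chosen parameters $s,t,k,\ell$ depending on $M$, I would exhibit an explicit sequence of $\alpha_1$- and $\alpha_2$-transformations whose endpoint is the circuit set of a matroid $M'$ containing $M$ as a restriction. The input matroid $M$ is arbitrary, so all the action must happen in the transformation sequence rather than in the initial hypergraph.

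First I would fix parameters. Given $M$ on $[n]$ with maximum circuit size $c$, take $\ell = n$, $s = 2$, and $t, k$ both strictly larger than $c$ (and larger than the rank of $M$). The choice $s = 2$ is crucial: every $2$-subset of a column $C_j$ is then a circuit of $\Delta^{s,t}$, so each column is forced to be a parallel class in any matroid dependent for $\Delta^{s,t}$. I would then identify the ground set $[n]$ of $M$ with the top row $R_1 = \{(1,1),\dots,(1,n)\}$ via $i \leftrightarrow (1,i)$, so that ``restriction to $R_1$'' is the target operation.

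Next I would build the sequence in two stages. Stage one uses only $\alpha_2$-transformations between a row $R_i$ (viewed as a $t$-circuit) and a column-pair $\{(i,j),(i',j)\}$ sharing the element $(i,j)$: the output $(R_i \setminus\{(i,j)\}) \cup \{(i',j)\}$ is the row $R_i$ with one entry swapped into another row along its column. Iterating such column-swaps generates, for any prescribed $t$-subset $T$ of the columns $[\ell]$, many $t$-sets realizing $T$ scattered throughout the grid. Stage two uses $\alpha_1$-transformations applied to pairs $A_1, A_2$ of stage-one outputs whose union contains a third previously generated circuit (so the $I_\Lambda(A_1 \cup A_2) \neq \emptyset$ hypothesis fires); the new set $A_1 \cap A_2$ is a smaller subset of $[k\ell]$, and by selecting the parents carefully, I can force this intersection to lie inside $R_1$ and equal the image of any prescribed circuit $C \in \mathcal{C}(M)$. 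Running this procedure once per $C \in \mathcal{C}(M)$ gathers all desired circuits in $R_1$.

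Finally, I would verify that the endpoint $\Lambda_r$ is the circuit set of a matroid $M'$ on $[k\ell]$—if needed, appending further $\alpha_2$-steps enforces the circuit-elimination axiom, since $\alpha_2$ is precisely that axiom applied to a single pair—and that $M'|_{R_1} \cong M$. The restriction claim holds because each $C \in \mathcal{C}(M)$ sits as a set of $\Lambda_r$ inside $R_1$, while the parameter choice $t > c$ prevents any stage-one output from landing inside $R_1$ and the stage-two outputs are placed only where intended. The main obstacle is stage two: one must choreograph the $\alpha_1$-applications so that exactly the circuits of $M$ appear inside $R_1$ and nothing smaller, which forces a careful inductive bookkeeping (one dedicated ``triangle'' of stage-one sets per target circuit $C$) and is precisely where the algorithmic machinery of \cite{M14} is used in earnest.
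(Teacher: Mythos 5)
Your overall strategy is in the right spirit — identify $R_1$ with $[n]$, start from $\Delta^{s,t}$, and drive down from the $t$-subsets of $R_1$ to the circuits of $M$ via transformations — and that is indeed what the paper does. But several mechanical details in your plan would not survive contact with the definitions, and two of them are fatal as written.

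The central problem is a confusion between $\alpha_1$ and $\alpha_2$. You describe Stage one as applying ``$\alpha_2$-transformations between a row $R_i$ \dots and a column-pair,'' but $\alpha_2$ is not selective: $\alpha_2(\Lambda)$ adds $(A_1\cup A_2)\setminus\{x\}$ for \emph{every} pair $A_1,A_2\in\Lambda$ and every $x\in A_1\cap A_2$ at once, then takes $\min$. You cannot apply it to a chosen pair. The only selective operation available is $\alpha_1(\Lambda;A_1,A_2)$, and what it adds is the \emph{intersection} $A_1\cap A_2$, not a union-minus-element. So Stage one, as described, is not a valid sequence of $\alpha_1$- or $\alpha_2$-transformations at all. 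The paper handles exactly this constraint by applying $\alpha_2$ \emph{once} to get $\Lambda_0(M)$, and then using only $\alpha_1$-steps; the clever point is that by choosing $A_1$ and $A_2$ to be two ``cross sets'' $H_i\cup K_j$ (with $|H_i|=t-1$, $|K_j|=2$, $K_j\subseteq C_j$), the intersection $A_1\cap A_2$ can be forced to be precisely the desired $(t-p)$-subset of $R_1$. Cross sets of this form are the workhorse of the argument, and they exist in $\alpha_2(\Delta^{s,t})$ precisely because $s=3$: with your choice $s=2$ every $2$-subset of a column is already a circuit, so $H_i\cup K_j$ is not inclusion-wise minimal and gets discarded by $\min$. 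Your $s=2$ choice therefore destroys the tool you would need to make Stage two work.

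A few further gaps: (i) your parameters can be inconsistent with the definition of $\Delta^{s,t}$, which requires $t\le\ell$; with $\ell=n$ and $t>c$ this fails whenever $M$ has a spanning circuit of size $n$. The paper instead fixes $k=5$, $s=3$, $t=\mathrm{rank}(M)+1$, and pads $\ell=n'+2(t-2)$ with extra ``scratch'' columns that are used as the $\{\ell-2p+1,\ell-2p+2\}$ anchors in $\Lambda_p(M)$; (ii) your description of when the $\alpha_1$-hypothesis ``fires'' is backwards — $I_\Lambda(A_1\cup A_2)\neq\emptyset$ requires the \emph{intersection} of all $\Lambda$-members inside $A_1\cup A_2$ to be nonempty, so extra circuits inside the union tend to \emph{kill} the condition rather than enable it; checking it requires knowing exactly which members of $\Lambda$ lie in $A_1\cup A_2$, which is where the bookkeeping of $\Lambda_p(M)$ is really doing work; (iii) you assert but do not argue that no unwanted circuits end up inside $R_1$ and that $\Lambda_r$ closes up to a matroid with the claimed restriction. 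The paper controls this by arranging that every element of $R_2\cup\dots\cup R_k$ ends up a loop or parallel to a column-mate in $R_1$ (via the $\Gamma$ and $\Psi$ stages of the proof), so each circuit of the final matroid is either inside $R_1$ or disjoint from it; that separation is what makes the restriction computation clean. Your plan does not address this at all.

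In short: right intuition, wrong tools. To repair the argument you would need to (a) respect that only $\alpha_1$ is selective and that $\alpha_1$ adds intersections, (b) move to $s\ge 3$ so that cross sets survive, (c) pad $\ell$ beyond $n$, and (d) explicitly control circuits outside $R_1$ — at which point you would essentially have rediscovered the paper's $\Lambda_p(M)$ construction.
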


We will prove Theorem~\ref{thm:hardness} after giving some definitions and proving some lemmas below. We remark that these lemmas and Theorem~\ref{thm:hardness}, as well as their proofs, are true if one replaces $\alpha_2$-transformations with $\alpha_3$-transformations.
We will use $\alpha_1$- and $\alpha_2$-transformations to obtain a matroid whose restriction to $\{(1,j):1\leq j\leq n\}$ is isomorphic to $M$. That isomorphism will be $j\rightarrow(1,j)$ for each $j\in[n]$. Thus, to simplify notation, we identify the element $(1,j)\in R_1$ 
with the element $j\in[\ell]$. (Similarly, we identify subsets of $R_1$~with~subsets~of~$[\ell]$.)

\begin{definition}
\label{def:M'}
Let $t\geq3$, and let $M$ be a matroid of rank $t-1$ on $[n]$. Let $c$ be the number of loops of $M$. Let $c'=\max\{c,t-1\}$ and $n'=n+c'-c$. Let $M'$ be the matroid on $[n']$ obtained from $M$ by adding $c'-c$ loops. Let $\ell=n'+2(t-2)$ and $M^+$ be the matroid on $[\ell]$ obtained from $M'$ by freely adding $\ell-n'$ elements to~the~flat~$[n']$.
\end{definition}

Note that $c'$ is the number of loops of both $M'$ and $M^+$. Also note that, since the rank of $M'$ is $t-1$, there are at least $t-1$ non-loop elements of $M'$. Therefore, $n'\geq t-1+c'\geq2(t-1)$.

\begin{definition}
\label{def:Lambda_p}
Let $M$ be a matroid of rank $t-1$ on ground set $[n]$, where $t\geq3$. Let $\ell=n'+2(t-2)$, while $s=3$ and $k=5$. For a non-negative integer $p\leq t-1$, let $\Lambda_p(M)$ be the clutter on $[k\ell]$ consisting of the inclusion-wise minimal sets among
\begin{itemize}
    \item[(1)] $3$-subsets of $C_j$, where $1\leq j\leq\ell$,
    \item[(2)] $t$-subsets of $R_i$, where $1\leq i\leq k$,
    \item[(3)] subsets of $[n']\subseteq R_1$ of size at least $(t-p)$ that are circuits of $M'$,
    \item[(4)] sets of the form $A\cup\{x\}\subseteq R_1$, where $x\in\{\ell-2p+1,\ell-2p+2\}$ and $A$ is a $(t-p-1)$-subset of $[\ell-2p]$ containing exactly one circuit of $M'$, and
    \item[(5)] sets of the form $H_i\cup K_j$ where $H_i\subseteq R_i\backslash C_j$ and $K_j\subseteq C_j\backslash R_i$, with $|H_i|=t-1$ and $|K_j|=2$.
\end{itemize}
We define a \emph{cross set} to be a set of the form described in (5).
\end{definition}

Note that $\Lambda_0(M)=\alpha_2(\Delta^{s,t})=\alpha_3(\Delta^{s,t})$ and the members of $\Lambda_p(M)$ must be inclusion-wise minimal. Hence, not all sets listed under (1)-(5) above are necessarily members of $\Lambda_p(M)$.~Therefore, we need~the~following~result.

\begin{lemma}
\label{lem:ciruits-included}
Every subset of $R_1$ of size at least $t-p$ that is a circuit of $M'$  is a member of $\Lambda_p(M)$.
\end{lemma}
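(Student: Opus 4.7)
The plan is to show that $C$ is of the form described in item (3) of Definition~\ref{def:Lambda_p}, and then verify the minimality by ruling out that any proper subset of $C$ could come from any of the five families (1)--(5). Since $C$ is a circuit of $M'$ and the ground set of $M'$ is $[n']\subseteq R_1$, the set $C$ is automatically a subset of $[n']\subseteq R_1$; together with the hypothesis $|C|\ge t-p$, this places $C$ in category (3). Moreover, $M'$ has rank $t-1$, so every circuit of $M'$ has at most $t$ elements, giving $|C|\le t$. These two size bounds will drive the rest of the argument.

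I would then take cases on which of (1)--(5) a hypothetical proper subset $B\subsetneq C$ could belong to. For (1), $B$ would be a $3$-subset of some $C_j$ lying inside $R_1$, but $C_j\cap R_1$ has at most one element, so no such $B$ exists. For (2), $B$ would be a $t$-subset, but then $|B|=t\ge |C|$, contradicting $B\subsetneq C$. For (3), $B$ would be a circuit of $M'$ strictly contained in the circuit $C$, which is impossible in any matroid. For (5), any cross set has size $(t-1)+2=t+1>|C|$, so again $B\subsetneq C$ is impossible.

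The only case that requires a small arithmetic argument is (4). Here $B=A\cup\{x\}$ with $x\in\{\ell-2p+1,\ell-2p+2\}$ and $A\subseteq[\ell-2p]$. Using $n'=\ell-2(t-2)$, one checks $\ell-2p+1>n'$ iff $p\le t-2$, so when $p\le t-2$ both admissible values of $x$ lie outside $[n']$; then $B\not\subseteq[n']\supseteq C$, ruling out $B\subsetneq C$. When $p=t-1$, we have $t-p-1=0$, so $A=\emptyset$, which contains no circuit of $M'$; hence category (4) is vacuous in this range. In all five cases, no proper subset of $C$ appears on the list, so $C$ is inclusion-wise minimal and therefore belongs to $\Lambda_p(M)$.

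The potential obstacle is category (4): one must check that the two ``extra'' column indices $\ell-2p+1,\ell-2p+2$ truly sit outside the ground set $[n']$ of $M'$ whenever the size of $A$ is positive, and that the boundary $p=t-1$ behaves trivially. Once the relationship $n'=\ell-2(t-2)$ is unpacked, this reduces to the single inequality $p\le t-2$, which is all that is really needed.
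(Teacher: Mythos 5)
Your proof is correct and follows essentially the same route as the paper: both arguments reduce to showing that no set from the five families in Definition~\ref{def:Lambda_p} is properly contained in a circuit $C$ of $M'$, observe that only families (2), (3), (4) can live inside $R_1$ (you make (1) and (5) explicit with the observations $|C_j\cap R_1|\le 1$ and $|H_i\cup K_j|=t+1>|C|$, which the paper leaves implicit), dispatch (2) and (3) via the size bound $|C|\le t$ and minimality of circuits, and handle (4) by the same case split on $p\le t-2$ versus $p=t-1$ with the same arithmetic $\ell-2p+1>n'\iff p\le t-2$.
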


\begin{proof}
It suffices to show that no member of $\Lambda_p(M)$ is properly contained in a circuit of $M'$. Other than circuits of $M'$, the only members of $\Lambda_p(M)$ that are contained in $R_1$ are $t$-subsets of $R_1$ and the sets described in (4) of Definition~\ref{def:Lambda_p}. No circuit of $M'$ has size greater than $t$. Therefore, no $t$-subset of $R_1$ is properly contained in a circuit of $M'$.
If $p=t-1$, then $t-p-1=0$. Since the empty set is never a circuit of a matroid, none of the sets described in (4) of Definition~\ref{def:Lambda_p} exist. If $p\leq t-2$, then $\ell-2p+1\geq \ell-2(t-2)+1 = n'+2(t-2)-2(t-2)+1>n'$. Since $[n']$ is the ground set of $M'$, no set described in (4) is contained in a circuit of $M'$.
\end{proof}

\begin{lemma}
\label{lem:p-1_to_p}
Let $M$ be a matroid of rank $t-1$, where $t\geq3$. Let $1\leq p\leq t-1$. There is a sequence of clutters $\Lambda_{p-1}(M)=\Lambda_{p,0},\Lambda_{p,1},\dots,\Lambda_{p,q}=\Lambda_p(M)$, where $\Lambda_{p,i}$ is an $\alpha_1$-transformation of $\Lambda_{p,i-1}$.
\end{lemma}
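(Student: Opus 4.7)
My plan is to decompose the transition $\Lambda_{p-1}(M)\to\Lambda_p(M)$ into two stages reflecting the two things that change between the two clutters: new circuits of $M'$ of size exactly $t-p$ must be added to type (3), and the type (4) sets must be refreshed so that the auxiliary element $x$ now ranges over $\{\ell-2p+1,\ell-2p+2\}$ rather than $\{\ell-2p+3,\ell-2p+4\}$. Once the new, strictly smaller sets are in place, the old type (4) sets of $\Lambda_{p-1}(M)$ will be absorbed into non-minimality automatically by the $\min$ in $\alpha_1$.

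\textbf{Stage A (new circuits).} For each circuit $D$ of $M'$ with $|D|=t-p$, I would exhibit $D$ as the $\alpha_1$-intersection of the pair $D\cup\{\ell-2p+3\}$ and $D\cup\{\ell-2p+4\}$, which both live in $\Lambda_{p-1}(M)$ as type (4) sets (take $A=D$). The key verification is that $I_{\Lambda_{p-1}}\bigl(D\cup\{\ell-2p+3,\ell-2p+4\}\bigr)=D$: the bound $\ell-2p+3>n'$, which holds for $p\le t-2$, rules out any further circuit of $M'$ from fitting inside the union, and the unique $t$-subset of $R_1$ that appears in the borderline case $p=2$ is already non-minimal in $\Lambda_{p-1}(M)$ because it contains the two type (4) sets as proper subsets. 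For the base case $p=1$, where $\Lambda_0(M)$ contains no type (4) sets, I would derive $D$ instead as the intersection of two cross sets $D\cup K_{j_1}$ and $D\cup K_{j_2}$ with $j_1,j_2\notin D$, choosing the $K_{j_i}$'s to have disjoint row projections; the hypothesis $k=5$ makes such a choice possible.

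\textbf{Stage B (new type (4) sets).} For each new type (4) set $T=D'\cup\{x'\}$ of $\Lambda_p(M)$, I would build $T$ by a cascade of $\alpha_1$-transformations using cross sets as helpers. First, a top-level helper $T\cup\{y_1,\dots,y_{p-1}\}$ of size $t-1$ is produced as the intersection of two cross sets sharing this $H_1$ but with different $K_j$'s. Pairs of such size-$(t-1)$ helpers that differ in one of the $y_i$'s are then combined by $\alpha_1$ to yield helpers of size $t-2$, and so on, reducing the size by one at each level until $T$ itself of size $t-p$ is added at the final level. At each step the verification that $I\ne\emptyset$ is analogous to Stage~A, and the previously added helpers kill the $t$-subsets of $R_1$ that would otherwise interfere by making them non-minimal.

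The crux of the argument, and the main obstacle, is the careful enumeration at each $\alpha_1$-step of which sets currently in the clutter sit inside the chosen union, and the verification that they share a common element. The two obstructions to watch for are $t$-subsets of $R_1$ that happen to lie inside the union, handled by the iterative structure that absorbs them into non-minimality before they can interfere, and cross sets $H_i\cup K_j$ with $i\ne 1$ arising from accidental row overlaps, which are avoided by using the extra room provided by $k=5$ when placing the auxiliary $K_{j_i}$'s and $y_i$'s. Once these verifications are in hand, the final clutter coincides with $\Lambda_p(M)$: every set of $\Lambda_p(M)$ has been produced, and every set of $\Lambda_{p-1}(M)$ not in $\Lambda_p(M)$ has been absorbed into non-minimality.
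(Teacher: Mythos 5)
Your Stage~A is exactly the paper's argument: for $p>1$, each new circuit $D$ of size $t-p$ is obtained in a single $\alpha_1$-step from the two type-(4) members $D\cup\{\ell-2p+3\}$ and $D\cup\{\ell-2p+4\}$ of $\Lambda_{p-1}(M)$, and for $p=1$ one uses a pair of cross sets with disjoint rows. So far so good.

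The gap is in Stage~B, where you introduce a multi-level cascade that is not needed and that you do not actually verify. The observation you miss is that the type-(4) sets of $\Lambda_p(M)$ are also reached in a single $\alpha_1$-step by exactly the mechanism of Stage~A. Take a new type-(4) set $T=D'\cup\{x'\}$ with $x'\in\{\ell-2p+1,\ell-2p+2\}$ and $D'\subseteq[\ell-2p]$ a $(t-p-1)$-set containing exactly one circuit of $M'$. Then $T$ itself is a $(t-p)$-subset of $[\ell-2p+2]$ containing exactly one circuit (adding $x'$ introduces none, since for $p\le t-2$ we have $\ell-2p+1>n'$), so both $T\cup\{\ell-2p+3\}$ and $T\cup\{\ell-2p+4\}$ are already type-(4) members of $\Lambda_{p-1}(M)$, and the same one-shot $\alpha_1$ you used in Stage~A produces $T$. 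In other words, the paper's proof treats both halves of $\Gamma$ (new circuits and new type-(4) sets) uniformly, with the $p=1$ base case handled by cross sets only because $\Lambda_0(M)$ has no type-(4) sets at all.

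Beyond being unnecessary, the cascade you propose is not a complete argument as stated. It introduces intermediate helper sets of sizes $t-1, t-2,\dots, t-p+1$ that are not members of $\Lambda_p(M)$ and must be removed by absorption later; at each level you must check $I_{\Lambda}(\cdot)\neq\emptyset$ against an ever-changing clutter that now contains not only the original members but also the helpers you just inserted and the new size-$(t-p)$ circuits from Stage~A; and the top-level helper $T\cup\{y_1,\dots,y_{p-1}\}$ could contain such a circuit if the $y_i$ are placed carelessly, which would make the newly formed set non-minimal and prevent it from entering the clutter at all. You acknowledge the enumeration of interfering sets as ``the crux'' but do not carry it out for the cascade, and it is considerably more delicate there than for the one-step argument. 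Replacing Stage~B with the same single-step $\alpha_1$ move as Stage~A eliminates all of these complications.
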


\begin{proof}
Note that $\Lambda_p(M)=\min(\Lambda_{p-1}(M)\cup\Gamma)$, where $\Gamma$ is the collection of
\begin{itemize}
\item $(t-p)$-subsets of $[n]\subseteq R_1$ that are circuits of $M'$ and
\item sets of the form $A\cup\{x\}$, where $x\in\{\ell-2p+1,\ell-2p+2\}$ and $A$ is a $(t-p-1)$-subset of $[\ell-2p]$ containing exactly one circuit of $M'$.
\end{itemize}
By induction, assume $\Lambda_{p,i-1}=\min(\Lambda_{p-1}(M)\cup\Gamma')$, where $\Gamma'\subseteq\Gamma$. Let $X\in\Gamma\backslash\Gamma'$. We proceed by~taking~cases~on~$p$.

\smallskip

\noindent{\bf Case 1.} Assume that $p=1$. Let $m_1,m_2\in[\ell]\backslash X$. (This is possible because $\ell\geq t+1$ if and only if $n'\geq-t+5$. We have $t\geq3$, implying that $3t\geq9$. This implies $n'\geq2t-2\geq-t+7$.) Let $K_{m_1}=\{(2,m_1),(3,m_1)\}$ and $K_{m_2}=\{(4,m_2),(5,m_2)\}$. Note that $X\cup K_{m_1}$ and $X\cup K_{m_2}$ are cross sets and therefore members of $\Lambda_{p,0}$. Also note that $X\cup K_{m_1}$ and $X\cup K_{m_2}$ are the only members of $\Lambda_{p,i-1}$ contained in $X\cup K_{m_1}\cup K_{m_2}$. Therefore, $\alpha_1(\Lambda_{p,i-1};X\cup K_{m_1},X\cup K_{m_2})=\min(\Lambda_{p,i-1}\cup\{X\})$. By iterating this process, we obtain $\Lambda_1(M)$.
\smallskip

\noindent{\bf Case 2.} Assume that $p>1$. By Definition~\ref{def:Lambda_p}, we see that $X\cup\{\ell-2(p-1)+1\}\in\Lambda_{p-1}(M)$ and $X\cup\{\ell-2(p-1)+2\}\in\Lambda_{p-1}(M)$. Also note that $X\cup\{\ell-2(p-1)+1\}$ and $X\cup\{\ell-2(p-1)+2\}$ are the~only~members of $\Lambda_{p,i-1}$ contained in $X\cup\{\ell-2(p-1)+1,\ell-2(p-1)+2\}$. This follows from the fact that all members of $\Lambda_{p,i-1}$ are inclusion-wise minimal (so $X\cup\{\ell-2(p-1)+1,\ell-2(p-1)+2\}$ is not a member of $\Lambda_{p,i-1}$) and the fact that $t\geq3$ (so $\{\ell-2(p-1)+1,\ell-2(p-1)+2\}$ is not a member of $\Lambda_{p,i-1}$).  Therefore, $\alpha_1(\Lambda_{p,i-1};X\cup\{\ell-2(p-1)+1\},X\cup\{\ell-2(p-1)+2\})=\min(\Lambda_{p,i-1}\cup\{X\})$. By iterating this process, we obtain $\Lambda_p(M)$.
\end{proof}

\begin{proof}[{\bf Proof of Theorem~\ref{thm:hardness}}]
First we consider the case where the rank of $M$ is at most $1$. Let $c$ be the number of loops of $M$. Let $s=t=k=2$ and $\ell=\max\{c+1,n\}$. For $i\leq c$, let $\Lambda_i=\alpha_1(\Lambda_{i-1};\{(1,i),(1,\ell)\},\{(1,i),(2,i)\})=\Lambda_{i-1}\cup\{(1,i)\}$. The result is that $\{(1,j)\}\in\Lambda_c$ for every $j\leq c$. Now, let $\Lambda_{c+1}=\alpha_2(\Lambda_c)$. This is the union of $\Lambda_c$ with the collection of $2$-subsets of $\{(2,j):1\leq j\leq\ell\}\cup\{(1,j):c+1\leq j\leq\ell\}$. This is the collection of circuits of a matroid whose restriction to $\{(1,j):1\leq j\leq n\}$ is isomorphic to $M$.

Now let $M$ be a matroid on $[n]$ of rank $t-1$, where $t\geq3$.  Let $c$, $c'$, $n'$, $M'$, $\ell$, and $M^+$ be as in Definition~\ref{def:M'}, while $s=3$ and $k=5$. As noted above, $\alpha_2(\Delta^{s,t})=\Lambda_0(M)$. By repeated use of Lemma~\ref{lem:p-1_to_p}, we can obtain $\Lambda_{t-1}(M)$ from $\Lambda_0(M)$ by a sequence of $\alpha_1$-transformations.

Now, let $\Gamma$ be the clutter consisting of all $2$-subsets of columns $C_j$ for $1\leq j\leq\ell$ other than the subsets containing a loop of $M'$. We will construct a sequence of clutters $\Lambda_{t-1}(M)=\Lambda'_0,\Lambda'_1,\dots,\Lambda'_q=\min(\Lambda_{t-1}(M)\cup\Gamma)$ such that $\Lambda'_i$ is an $\alpha_1$-transformation of $\Lambda'_{i-1}$. By induction, assume $\Lambda'_{i-1}=\min(\Lambda_{t-1}(M)\cup\Gamma')$, where $\Gamma'\subseteq\Gamma$. Let $X\in\Gamma\backslash\Gamma'$. Then there are positive integers $j,m_1,m_2$ such that $X=\{(m_1,j),(m_2,j)\}$. Let $\{m_3,m_4\}\subseteq\{2,3,4,5\}\backslash\{m_1,m_2\}$. Let $K_{m_3}\subseteq R_{m_3}\backslash C_j$ and $K_{m_4}\subseteq R_{m_4}\backslash C_j$ with $|K_{m_3}|=|K_{m_4}|=t-1$. Moreover, we can choose $K_{m_3}$ and $K_{m_4}$ so that no column contains an element of both. (This is possible because $\ell\geq2(t-1)+1$ if and only if $n'\geq3$. This is true because $n'\geq2(t-1)\geq4$.) Note that the only members of $\Lambda'_{i-1}$ contained in $X\cup K_{m_3}\cup K_{m_4}$ are the cross sets $X\cup K_{m_3}$ and $X\cup K_{m_4}$. Therefore, $\alpha_1(\Lambda'_{i-1};X\cup K_{m_3},X\cup K_{m_4})=\min(\Lambda'_{i-1}\cup\{X\})$. By iterating this process, we obtain $\min(\Lambda_{t-1}(M)\cup\Gamma)$.
Let $\Lambda''=\min(\Lambda_{t-1}(M)\cup\Gamma)$. By Lemma~\ref{lem:ciruits-included}, the circuits of $M'$ are members of $\Lambda_{t-1}(M)$. The circuits of $M^+$ that are not circuits of $M'$ are the $t$-subsets of $R_1$ not containing a circuit of $M'$. Thus, the members of $\Lambda''$ are:
\begin{itemize}
    \item $2$-subsets of $C_j$, where $1\leq j\leq\ell$, other than the subsets containing a loop of $M'$,
    \item subsets of $R_1$ that are circuits of $M^+$, and
    \item $t$-subsets of $R_i$, where $2\leq i\leq k$.
\end{itemize}
Recall that $c$ is the number of loops of $M$ and $c'$ is the number of loops of $M'$. Without loss of generality, assume that the loops of $M'$ are $\{n-c+1,n-c+2,\dots,n'\}$.

Let $\Psi$ be the clutter consisting of singleton sets $\{(i,j)\}$, where $2\leq i\leq5$ and either $1\leq j\leq n-c$ or $n'+1\leq j\leq\ell$. (So $j$ is not a loop of $M^+$.) We will construct a sequence of clutters $\Lambda''=\Lambda''_0,\Lambda''_1,\dots,\Lambda''_z=\min(\Lambda''\cup\Psi)$ such that $\Lambda''_i$ is an $\alpha_1$-transformation of $\Lambda''_{i-1}$. By induction, assume $\Lambda''_{i-1}=\min(\Lambda''\cup\Psi')$, where $\Psi'\subseteq\Psi$. Let $\{(i,j)\}\in\Psi\backslash\Psi'$. Let $A_1=\{(i,j)\}\cup\{(i,m):n-c+1\leq m\leq n-c+t-1\}$ and let $A_2=\{(i,j),(1,j)\}$. Since $n'\geq n-c+t-1$, no element of $A_1\cup A_2$ is a singleton set in $\Psi$. Thus, $A_1$ and $A_2$ are the only members of $\Lambda''_{i-1}$ contained in $A_1\cup A_2$ and $\alpha_1(\Lambda''_{i-1};A_1,A_2)=\min(\Lambda''\cup\{(i,j)\})$. By iterating this process, we obtain $\min(\Lambda''\cup\Psi)$.

Let $\Lambda'''=\min(\Lambda''\cup\Psi)$. The members of $\Lambda'''$ are
\begin{itemize}
    \item singleton sets $\{(i,j)\}$ where $2\leq i\leq5$ and either $1\leq j\leq n-c$ or $n'+1\leq j\leq\ell$,
    \item $2$-subsets of $C_j\backslash\{(1,j)\}$, where $n-c+1\leq j\leq n'$,
    \item $t$-subsets of $\{(i,j):n-c+1\leq j\leq n'\}$ for some fixed $i$ with $2\leq i\leq5$, and
    \item subsets of $R_1$ that are circuits of $M^+$.
\end{itemize}
Note that each member of $\Lambda'''$ is either contained in $R_1$ or disjoint from $R_1$.
For every clutter, a process of repeated $\alpha_2$-transformations on the clutter must eventually terminate in a clutter that is the collection of circuits of a matroid. Therefore, repeated $\alpha_2$-transformations on $\Lambda'''$ will result in a clutter that is the collection of circuits of a matroid $N$ whose circuits are either contained in $R_1$ or disjoint from $R_1$. Since the members of $\Lambda'''$ contained in $R_1$ are the circuits of $M^+$, circuit elimination implies that the circuits of $N$ contained in $R_1$ are precisely the circuits of $M^+$. In particular, the restriction of $N$ to $\{(1,j):1\leq j\leq n\}$ is isomorphic to $M$.
\end{proof}

Theorem~\ref{thm:hardness} applies to all matroids, regardless of rank. However, in the case where the matroid is simple and has rank at most $3$, the result can be illustrated more clearly using a different procedure from the one used in the proof of the theorem. We use point and line configurations.
Consider the Fano matroid whose point and line configuration is the Fano plane given in Figure~\ref{fig:fano_components}.
It is well-known that the Fano matroid is not realizable over any field of characteristic other than $2$; therefore, it is not $\mathbb{C}$-realizable. In the case where $n=7$ and \vspace{-2mm}
$$
\MD=\left\{L_1=\{1,2,4\},L_2=\{1,3,6\},L_3=\{1,5,7\},L_4=\{2,3,5\},L_5=\{2,6,7\},L_6=\{3,4,7\},L_7=\{4,5,6\}\right\}\cup\binom{[n]}{4},
$$ 
the only minimally dependent matroid with respect to $\min(\MD)$ is the Fano matroid, which is not relevant for our purposes because it is not $\mathbb{C}$-realizable. However, this can arise as the simplification of a matroid coming from a point and line configuration with $s=2$ and $t=3$.

Indeed, consider the case $k=\ell=7$. Let $(i,j)$ be the unique element contained in $R_i\cap C_j$. Iteratively performing $\alpha_1$-transformations (defined earlier in this subsection), we can obtain from the clutter $\Delta^{2,3}$ a clutter that contains all of the singletons in $R_i$ except for the elements $(i,j)$ such that $j\in L_i$. In particular, the members of this clutter are the following:
\begin{itemize}
    \item all singletons except for the elements $(i,j)$ such that $j\in L_i$,
    \item $3$-subsets of the form $\{(i,j)\in R_i:j\in L_i\}$, and
    \item $2$-subsets of $C_j$ of the form $\{(i_i,j),(i_2,j)\}$, where $j\in L_{i_1}\cap L_{i_2}$.
\end{itemize} Then, by performing two transformations (both $\alpha_2$-transformations, both $\alpha_3$-transformations, or one of each), we obtain a clutter that is the collection of circuits of a matroid whose simplification is the Fano matroid and whose rank-$1$ flats each have three non-loop elements. The second transformation is needed to include the Fano matroid's circuits of size $4$. (This works because, for each $4$-element circuit $C$ of the Fano matroid, there is a fifth element $x$ such that $C\cup \{x\}$ is the union of two $3$-element circuits whose intersection is $x$.)

\section{Conditional independence models}\label{sec:Applications}
\label{sec:applications_CI}

Conditional independence (CI) models play an important role in algebraic statistics \cite{Studeny05:Probabilistic_CI_structures}. Given a collection of random variables and knowledge of the conditional dependencies, or independencies, among them, we can ask what are the distributions that satisfy them. In a more general setting, some of the random variables appearing in a CI model can be prescribed as unobserved (or hidden). Our goal is to determine when certain constraints on the observed variables arise from conditions on the hidden variables \cite{Steudel-Ay}. This problem can be restated algebraically by noting that probability distributions satisfying CI statements are the solutions of certain polynomial equations \cite{DrtonSturmfelsSullivant09:Algebraic_Statistics, Sullivant} which generate the so-called \textit{CI ideal}. The distributions satisfying a given collection of CI statements can be recovered by intersecting the CI ideal with the probability simplex. When there are no hidden variables, these polynomials are binomials and their associated ideals are well-studied; see e.g.~\cite{Fink,herzog2010binomial,Rauh,SwansonTaylor11:Minimial_Primes_of_CI_Ideals}. However, in the presence of hidden variables, the polynomials become far more complicated of arbitrarily high degrees and very difficult to calculate; see e.g.~\cite{pfister2019primary,clarke2020conditional}.

\medskip
Let $X, Y_1, Y_2$ be observed and $H_1, H_2$ be hidden random variables taking values in the finite sets $\MX, \MY_1, \MY_2, \MH_1, \MH_2$ of cardinalities $|\MX|=d$, $|\MY_1|=k$, $|\MY_2|=\ell$, $|\MH_1|=s-1$, $|\MH_2|=t-1$. 
Consider the CI model given by:
\begin{eqnarray}\label{eq:C}
\MC : \ \ 
\ind X {Y_1} \mid \{Y_2, H_1\} \quad \textrm{and} \quad  \ind X {Y_2} \mid \{Y_1, H_2\}.
\end{eqnarray}
Then the CI ideal associated to $\MC$ is precisely the hypergraph ideal $I_{\Delta^{s,t}}$ in \S\ref{sec:Delta_st_main}; see \cite{ollie_fatemeh_harshit, clarke2020conditional}.

\begin{example}\label{example:CI-3-4-2}
{\rm 
Let $d = k = s = t = 3$ and $\ell = 4$. 
The joint distribution of $Y_1$ and $Y_2$ has state space $\MY = \MY_1 \times \MY_2$ which is identified with the $3 \times 4$ matrix $\MY$ with values in the set $[12]$ as in \eqref{eq:Y}. 
In this case, the CI ideal $I_{\Delta} \subseteq \CC[P] := \CC[p_{x,y} : x \in \MX, y \in \MY]$ is the hypergraph ideal from Example~\ref{ex:andreas}.
The ideal $I_{\Delta}$ has two prime components. One component, associated to the matroid $M_0$ from Example~\ref{ex:andreas}, is generated by all $3$-minors of the matrix of variables $P$. This is the ideal associated to the CI statements $\ind{X}{\{Y_1, Y_2 \}} \mid H_1$ and $\ind{X}{\{Y_1, Y_2 \}} \mid H_2$. Note that $H_1$ and $H_2$ are hidden random variables taking the same number of values, and so the ideals do not distinguish them. The other prime component $I_M$ is the ideal of the configuration described in Example~\ref{ex:andreas} consisting of $12$ points and $7$ lines. Besides the rank constraints given by the original CI model, the ideal $I_M$ contains geometric constraints which are satisfied by all distributions which do not lie in $V_{M_0}$.
}
\end{example}

We note that for large values of $d$, understanding the algebraic properties of the ideal $I_{\Delta^{s,t}}$ and its (primary) decomposition is hard. In particular, Theorem~\ref{thm:hardness} shows that, up to simplification, any matroid can appear among the dependent matroids for $\Delta^{s,t}$. And, in the example above, some high-degree polynomials may appear in the generating sets of the primary components of $I_{\Delta^{s,t}}$. However, for $d \le s + t - 3$ we have shown in Theorem~\ref{thm:s-t-3} that $\Delta^{s,t}$ has a unique minimally dependent matroid $M$. So, for a generic point $p$ in the variety $V_{\Delta^{s,t}}$, the additional polynomial constraints on $p$ arise from geometric constraints on realizations of $M$.  

\begin{remark}
Suppose that $H_1$ and $H_2$ are both constant. The \textit{intersection axiom} states that any distribution satisfying $\MC$ in \eqref{eq:C} \textit{generically} satisfies $\ind X {\{Y_1, Y_2\}}$. Here, genericity means that the distributions have non-zero probabilities.
The intersection axiom has been studied for various CI models; see e.g.~\cite{herzog2010binomial, Rauh, clarke2020conditional, ollie_fatemeh_harshit, pfister2019primary}. We note that our family of hypergraph varieties include all these cases as examples. In particular, the corresponding ideal $I_{\Delta^{s,t}}$ has a distinguished prime component with a particular statistical significance, since the distributions which lie inside do not contain any structural zeros \cite{Sturmfels02:Solving_polynomial_equations}. Importantly, if we assume, without loss of generality, that $s \le t$ then this prime ideal can be realized as the CI ideal of $\ind X {\{Y_1, Y_2\}} \mid H_2$. We may therefore deduce a hidden variable version of the intersection axiom as follows:
\[
\MC = \{\ind X {Y_1} \mid \{H_1,Y_2\}, \ \ind X {Y_2} \mid \{Y_1, H_2\} \}
\implies
\ind{X}{\{Y_1, Y_2\} \mid H_2}.
\]
\end{remark}

Finally, we give connections of our work to an interesting conjecture by Matúš; see \cite{matu1999conditional}.

\begin{conjecture}[\cite{matu1999conditional}]\label{conj:matus_rational}
For any discrete conditional independence model $\MC$, there exists a distribution $p \in V(J_{\MC})$ such that all joint probabilities of $p$ are rational. 
\end{conjecture}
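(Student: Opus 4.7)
The plan is to leverage the matroid stratification developed in this paper to reduce Conjecture~\ref{conj:matus_rational} to questions about $\mathbb{Q}$-realizability of the minimally dependent matroids appearing in the decomposition of $V(J_\MC)$. First, I would apply Proposition~\ref{prop:comb_decomposition} to write $V(J_\MC) = \bigcup_M V_M^{\comb}$, where $M$ ranges over minimally dependent matroids for the hypergraph underlying the CI model $\MC$. It then suffices to exhibit a rational point in at least one component whose intersection with the probability simplex is nonempty. (A priori the uniform distribution $p_x = 1/N$ already satisfies every CI statement and lies in $V(J_\MC)$, so the conjecture is ``trivially'' true if interpreted loosely; the substantive content is to obtain rational points in nontrivial components, or a rational realization of each component intersecting the simplex.)

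For the families already controlled in this paper, one can make the plan entirely explicit. For consecutive forest hypergraphs, Theorem~\ref{thm:adj-tree-decomp} gives an irredundant decomposition indexed by prime collections, and Theorem~\ref{thm:realizable} produces $\mathbb{R}$-realizations of each $M_\MS$. A close inspection of that proof shows that every step—the base case (representation by the zero matrix), each invocation of Lemma~\ref{lem:extension} (free extension over any infinite field), and each use of Lemma~\ref{lem:coloop} (coloop extension)—can be carried out with rational coordinates, because ``avoid a finite union of proper subspaces'' has rational solutions over $\mathbb{Q}$ so long as the field is $\mathbb{Q}$ itself. Similarly, the explicit construction in Theorem~\ref{thm:s-t-3} of affine hyperplane arrangements in general position can be performed with rational coefficients. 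Finally, a linear scaling brings any rational realization into the probability simplex while preserving rationality, proving Conjecture~\ref{conj:matus_rational} for all CI models whose hypergraph lies in the families studied here.

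The main obstacle is extending this strategy beyond the well-behaved families. By Theorem~\ref{thm:hardness}, every matroid appears, up to restriction, among the dependent matroids of some $\Delta^{s,t}$, and the Mn\"ev--Sturmfels Universality Theorem ensures that individual matroid varieties can carry arbitrarily bad singularities and may fail even to be $\mathbb{R}$-realizable, let alone $\mathbb{Q}$-realizable. A component-by-component approach is therefore doomed in general, and the plan must instead identify, for every CI model, a distinguished ``anchor'' component---for instance a coarse matroid obtained by systematically setting elements to loops or merging parallel classes (in the spirit of the central components appearing in Theorems~\ref{thm:forestlike_closure} and~\ref{thm:forestlike_comp})---which is guaranteed to admit rational points independently of the complexity of the remaining strata. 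The hardest part is to prove that such a $\mathbb{Q}$-realizable anchor always exists and meets the probability simplex in its relative interior; any proof of the conjecture in full generality must confront this, essentially by exploiting that CI models are symmetric enough to force the existence of a tractable ``universal'' stratum even when the full variety encodes arbitrarily wild geometry.
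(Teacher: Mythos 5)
This statement is a conjecture attributed to Matúš, and the paper offers no proof of it; it is not a theorem of the paper. In fact, the surrounding discussion in \S\ref{sec:Applications} runs in the opposite direction from your plan: the authors combine Theorem~\ref{thm:hardness} (every matroid appears, up to restriction, among the dependent matroids of some $\Delta^{s,t}$) with the existence of matroids realizable over a real extension of $\mathbb{Q}$ but not over $\mathbb{Q}$ itself, and explicitly propose that this ``might lead to a characterization of CI models with hidden variables for which Conjecture~\ref{conj:matus_rational} does not hold.'' That is, the paper uses its machinery to point toward potential \emph{counterexamples}, not toward a proof.

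As a proof attempt your plan has a gap at its very center, and your own parenthetical hints at it: the uniform distribution always lies in $V(J_{\MC})$ and is rational, so the conjecture as literally transcribed in the paper is vacuous. The substantive form of the Matúš problem necessarily involves a prescribed CI \emph{structure} --- requiring certain dependences to hold as well, or prescribing the support pattern --- which rules out the uniform point and forces one into a nontrivial stratum. Your reduction via Proposition~\ref{prop:comb_decomposition} to $\mathbb{Q}$-realizability of components, and your observation that the constructions in Theorem~\ref{thm:realizable} and Theorem~\ref{thm:s-t-3} can be carried out over $\mathbb{Q}$, are consistent with the paper's methods and would suffice for the restricted families treated there. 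But the crucial step you defer --- that every CI model admits a $\mathbb{Q}$-realizable ``anchor'' component meeting the interior of the probability simplex --- is exactly the content of the conjecture and is asserted rather than argued. Given the universality phenomena established in the paper, there is no evident reason such an anchor must exist in general; indeed the paper's authors read the same evidence as suggesting the conjecture may fail once hidden variables are allowed.
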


In Theorem~\ref{thm:hardness}, we have seen that for large enough $s,t,k,\ell$, any matroid may appear among the dependent matroids for $\Delta^{s,t}$. A natural approach is to carefully choose additional conditional independence and dependence statements for the model \eqref{eq:C}, in order to guarantee that any distribution $p$ satisfying $\MC$ is a realization of a given, realizable, matroid. 
Note that there exist matroids that are not realizable over the rationals but are realizable over a real field extension. Hence, this might lead to a characterization of CI models with hidden variables for which Conjecture~\ref{conj:matus_rational} does not hold.

\medskip
\noindent{\bf Acknowledgement.}
We would like to thank Leonid Monin for helpful conversations. We would like to thank the anonymous referees for their insightful comments.
Clarke, Mohammadi and Motwani were supported by the grants G0F5921N (Odysseus programme) and G023721N from the Research Foundation - Flanders (FWO), and the UGent BOF grant STA/201909/038. Clarke was partially supported as an overseas researcher under Postdoctoral Fellowship of Japan Society for the Promotion of Science (JSPS).

\bigskip
\newcommand{\etalchar}[1]{$^{#1}$}

\bigskip
\noindent
 \footnotesize {\bf Authors' addresses:}

 \bigskip 

 \noindent Department of Mathematics, University of Bristol, Bristol, UK \\
 E-mail address: {\tt oliver.clarke@bristol.ac.uk}
 \medskip

 \noindent School of Mathematics, University of Bristol, Bristol, UK \\
 \noindent Heilbronn Institute for Mathematical Research, Bristol, UK \\
 \noindent Current address: Department of Mathematics, Vanderbilt University, Nashville, Tennessee, USA
 \\
 E-mail address: {\tt kevin.m.grace@vanderbilt.edu}

 \medskip

   \noindent Department of Computer Science, KU Leuven, Celestijnenlaan 200A, B-3001 Leuven, Belgium\\ 
   Department of Mathematics, KU Leuven, Celestijnenlaan 200B, B-3001 Leuven, Belgium\\
 Department of Mathematics and Statistics,
 UiT – The Arctic University of Norway, 9037 Troms\o, Norway
 \\ E-mail address: {\tt fatemeh.mohammadi@kuleuven.be}

 \medskip

\noindent Department of Mathematics, Ghent University, Gent, Belgium \\
\noindent New address:  Departments of Computer Science and Mathematics, KU Leuven, Celestijnenlaan 200A, B-3001 Leuven, Belgium\\
E-mail address: {\tt harshitjitendra.motwani@ugent.be}

\end{document}